\def\els@aparagraph[#1]#2{\elsparagraph[#1]{#2\@addpunct{.}}}
\def\els@bparagraph#1{\elsparagraph*{#1\@addpunct{.}}}
\DeclareMathOperator{\Hom}{Hom}
\DeclareMathOperator{\vect}{\textbf{Vec}}
\DeclareMathOperator{\GHZ}{GHZ}
\DeclareMathOperator{\I}{I}
\DeclareMathOperator{\II}{II}
\DeclareMathOperator{\III}{III}
\begin{document}
	\title{Universal skein theory for group actions}
	\author{Yunxiang Ren}
	\address{Department of Mathematics and Department of Physics\\ Harvard University}
	\email{yren@g.harvard.edu}
	\maketitle
	\newtheorem{Lemma}{Lemma}
	\theoremstyle{plain}
	\newtheorem{theorem}{Theorem~}[section]
	\newtheorem{main}{Main Theorem~}
	\newtheorem{lemma}[theorem]{Lemma~}
	\newtheorem{proposition}[theorem]{Proposition~}
	\newtheorem{corollary}[theorem]{Corollary~}
	\newtheorem{definition}[theorem]{Definition~}
	\newtheorem{notation}[theorem]{Notation~}
	\newtheorem{example}[theorem]{Example~}
	\newtheorem{assumption}[theorem]{Assumption~}
	\newtheorem*{remark}{Remark~}
	\newtheorem*{cor}{Corollary~}
	\newtheorem{question}{Question}
	\newtheorem*{claim}{Claim}
	\newtheorem*{conjecture}{Conjecture~}
	\newtheorem*{fact}{Fact~}
	\renewcommand{\proofname}{\bf Proof}

\begin{abstract} Given a group action on a finite set, we define the group-action model which consists of tensor network diagrams which are invariant under the group symmetry. In particular, group-action models can be realized as the even part of group-subgroup subfactor planar algebras. Moreover, all group-subgroup subfactor planar algebras arise in this way from transitive actions. In this paper, we provide a universal skein theory for those planar algebras. With the help of this skein theory, we give a positive answer to a question asked by Vaughan Jones in the late nineties. 
	
\end{abstract}
\section{Introduction}
In the late nineties, Vaughan Jones asked the following question.
\begin{question}[Vaughan Jones, \cite{JonPC}]\label{quest: VJ}
		Is the group-subgroup subfactor planar algebra for $S_2\times S_3\subset S_5$ generated by its $2$-boxes?
\end{question}
In this paper, we give a positive answer to Question \ref{quest: VJ} by exploring \textit{group-action models} in the framework of tensor networks. Tensor network theory has played a crucial role during recent years in modern quantum physics and led to numerous results in understanding quantum many-body systems and quantum information theory. In the framework of tensor networks, physical states are represented by diagrams, called \textit{tensor network diagrams}. The basic building blocks are \textit{tensors} associated with a $d$-dimensional vector space $V$ and a tensor network diagram is obtained from the tensors by applying the three basic operations: \textit{tensor product}, \textit{contraction} and \textit{permutation} (see Definitions \ref{def: tensor networks} and \ref{def: tensor network diagrams}). The tensor networks are closely related to the theory of tensor categories \cite{EGNO}: Let $\vect$ be the monoidal category of finite-dimensional vector spaces. Every tensor network diagram can be realized as a morphism in the monoidal category $\vect$. Moreover, the $\GHZ$ tensor (see Definition \ref{def: GHZ}) provides a Frobenius algebra structure. In this paper, we study the tensor network diagrams with group symmetry: Suppose $V$ is equipped with a unitary action of a finite group $G$. This action can be extended on all the tensor network diagrams. The \textit{group-action model} is defined to be the collection of all diagrams which are invariant under this group action. 

Now we explain Question \ref{quest: VJ} in terms of group-action models. First of all, the even part of the group-subgroup subfactor planar algebra for $S_2\times S_3\subset S_5$ is a group-action model associated with the \textit{Petersen graph} (see Definition \ref{def: Petersen graph}). An $n$-box in the even part of the subfactor planar algebra is a rank-$n$ tensor in the group-action model. The phrase ``generated by" means ``obtained by applying planar tangles on". Therefore, in the context of group-action models, Question \ref{quest: VJ} is equivalent to the following, 
\begin{question}\label{ques: equiv formuation of ques of VJ}
	Can every tensor network diagram in the group-action model associated withg the Petersen graph be obtained by applying operations of tensor product and contraction on rank-$2$ tensors and the $\GHZ$ tensor?
\end{question}
The main tool we use to answer this question is \textit{skein theory} of group-action models. One of the fundamental questions in tensor network theory is to evaluate tensor network diagrams. To be precise, one needs to determine the \textit{skein theory} of the collection of tensor network diagrams under given conditions, such as invariance under some group symmetry in this paper. A skein theory is a collection of generators, relations and an evaluation algorithm: generators are the tensors on which every tensor network diagram can be obtained; relations are equalities between tensor network diagrams; an evaluation algorithm is provided such that every closed tensor network diagram can be evaluated into a scalar. In general, a basis for the space of tensor network diagrams with $n$ boundary points needs to be specified. These are called standard forms. Every tensor network diagram can thereby be evaluated to a linear combination of standard forms. With a skein theory in hand, one can completely describe the tensor networks. 

In this paper, we answer Question \ref{ques: equiv formuation of ques of VJ} in the following way. In \S \ref{sec: The universal skein theory for group action}, we provide a universal skein theory for group-action models as in Theorem \ref{thm: skein theory} for an action of a finite group $G$ on the $d$-dimensional space $V$. The generators are the $\GHZ$ tensor, a rank-$4$ tensor, called \textit{transposition}, and a rank-$d$ tensor, called \textit{molecule}. Moreover, the even parts of all group-subgroup subfactor planar algebras arise as group-action models. Therefore, we provide a universal skein theory for group-subgroup subfactor planar algebras (see Corollary \ref{cor: group-subgroup}). With the help of the universal skein theory, we give a positive answer to \ref{ques: equiv formuation of ques of VJ} by providing an explicit construction of the generators using the $\GHZ$ tensor and rank-$2$ tensors. In this case, the molecule is a rank-$10$ tensor and it is constructed as the Petersen graph while considering the vertices as the $\GHZ$ tensor and the edges as a rank-$2$ tensor as explained in Theorem \ref{thm: 2-box generating}. The rank-$2$ tensor behaves like an atom and the $\GHZ$ tensor describes the \textit{bonding} between different atoms. In nature, atoms are more basic building blocks but materials appear as in a composition of molecules. Similarly, tensor network diagrams in this group-action model are generated by rank-$2$ tensors, \textit{atoms} and the $\GHZ$ tensor, the \textit{bonding}, but a more natural way to describe the diagrams in terms of the rank-$10$ tensor, the \textit{molecule}. The major difficulty to answer Question \ref{ques: equiv formuation of ques of VJ} is to construct the rank-$4$ tensor \textit{transposition}. In general, one needs to show that the space of rank-$4$ tensors is spanned by the $\GHZ$ tensor and rank-$2$ tensors. In this case, the space of rank-$4$ tensors is $107$-dimensional and thus it requires an explicit construction of $107$ linearly independent diagrams generated by the $\GHZ$ tensor and rank-$2$ tensors. In Theorem \ref{thm: condition1}, we provide a construction by utilizing the combinatorial data from the group action of $S_5$ on the Petersen graph.  
\bigskip
\paragraph{\textbf{Acknowledgements}} The author would like to thank Vaughan Jones for support, encouragement, and many inspiring conversations. The author would like to thank Zhengwei Liu for many helpful discussions and also Youwei Zhao for designing a Javascript program for generating the diagrams in the Appendix. The research was supported by NSF Grant DMS-1362138 and TRT 0159 from the Templeton Religion Trust.
\section {Background}
Question \ref{quest: VJ} traces back to the program of classifying subfactor planar algebras which originated from modern subfactor theory initiated by Vaughan Jones \cite{Jon83}. Subfactor planar algebras were introduced as an axiomatization of \textit{standard invariants} for subfactors \cite{Jon99}. There are three different approaches to classify subfactor planar algebras, namely, classification by index, by simple generator and relations, and by skein theory. The approach of classification by index appeared first historically and has been extremely successful, see \cite{JMS14}. In 1999, Bisch and Jones proposed the program of classifying subfactor planar algebras by simple generators and relations \cite{Jon99, BisJon00}. In this scheme, the generator is a non-Temperley-Lieb $2$-box, namely a diagram with $4$ boundary points. The dimension of the $3$-box space is required to be small. Such planar algebras are called singly generated planar algebras of small dimension. The restriction of small dimension immediately forces the appearance of certain skein relations. When the dimension of the $3$-box space is less than or equal to $14$, these relations are sufficient to provide a skein theory and thus to classify subfactor planar algebras \cite{BisJon03,BJL17}. In this approach, the index is no longer a restriction and one can obtain subfactor planar algebras with large indices in the classification. In the categorical setting, Morrison, Peters and Synder investigated trivalent categories and gave a classification of small dimensions \cite{MPS15}. In general, the dimension restriction may not be sufficient to provide a skein theory to describe the planar algebras. 

Another approach to classify subfactor planar algebras is by skein theory. It has no restrictions on either indices nor dimensions. From this point of view, the classification of Bisch and Jones in \cite{BisJon00,BisJon03} can be considered as a classification of exchange relation planar algebras generated by a single 2-box, where the exchange relation was introduced by Landau \cite{Lau02} generalizing Bisch's exchange relation of biprojections \cite{Bis94}. Liu gave the classification of exchange relation planar algebras on two generators, and completely classified commute relation planar algebras with multiple generators without dimension restrictions in \cite{Liu16}. Liu also gave a complete classification of singly-generated Yang-Baxter relation planar algebra in \cite{Liu15}, where a surprising new family of subfactor planar algebras was constructed. In \cite{JLR}, Corey Jones, Liu and the author gave a classification of planar algebras generated by a $3$-box satisfying a relation introduced by D. Thurston in \cite{Thu04}, which we called the Thurston relation.

It is very interesting that the group-subgroup subfactor planar algebra for $S_2\times S_3\subset S_5$ has the same dimensions on $0,1,2,3$-box spaces with the Birman-Murakami-Wenzl (BMW) planar algebras. Vaughan Jones asked a natural question whether this planar algebra is generated by its 2-boxes, namely Question \ref{quest: VJ}, in the late nineties. We give a positive answer to this question. The follow-up task is to find its skein theory since it does not satisfy the Yang-Baxter relations as the BMW planar algebras. Indeed, the skein relations forced by the dimension restriction on the $3$-box space, which we call the One-way Yang-Baxter relation, are not sufficient to determine the planar algebra. In this paper, we discover a natural skein theory for this planar algebra. It is surprising that one of the generators is a $10$-box and one essential relation appears in the $20$-box space from the perspective of skein theory. This phenomenon shows that the simple generators and relations of a planar algebra may live in larger box spaces, which is different from previous approaches in skein theory. 
\section{Preliminary}\label{sec: Prelimenary}
In this section, we recall some basics about tensor networks and introduce group-action models.
\begin{definition}[Spin models]\label{def: tensor networks}
	 Suppose $V$ is a $d$-dimensional vector space with a basis $\{e_1,e_2,\cdots,e_d\}$. The spin model $\mathscr{P}_\bullet$ associated with V is defined to be a family of vector spaces $\{\mathscr{P}_{n}\colon n\geq0\}$, where $\mathscr{P}_{n}=V^{\otimes n}$ for $n>0$, and $\mathscr{P}_0$ is the ground field $\mathbb{C}$. A rank-$n$ tensor is a vector in $\mathscr{P}_{n}$. We denote the basic tensor $e_{i_1}\otimes e_{i_2}\otimes \cdots \otimes e_{i_n}$ by $\vec{i}=(i_1,i_2,\cdots,i_n)$. Moreover, there are three operations on $\mathscr{P}_\bullet\colon$ For two basic tensor $\vec{i}=(i_1,i_2,\cdots,i_n)$ and $\vec{j}=(j_1,j_2,\cdots,j_m)$, we have that
	 \begin{itemize}
	 	\item Tensor product$\colon$
	 	\begin{equation}\label{equ: tensor product}
	 	\vec{i}\otimes \vec{j}=(i_1,i_2,\cdots,i_n,j_1,j_2,\cdots,j_m).
	 	\end{equation}
	 	\item Contraction$\colon$ for $1\leq k\leq n$,
	 	\begin{equation}\label{equ: contraction}
	 	C_{k,k+1}(\vec{i})=\delta_{i_k,i_{k+1}}\cdot (i_1,i_2,\cdots,i_{k-1},i_{k+2},\cdots,i_n),
	 	\end{equation}
	 	where $\delta$ is the Kronecker delta.
	 	\item Permutation$\colon$ for $1\leq k\leq n$,
	 	\begin{equation}\label{equ: permutation}
	 	S_{k,n}(\vec{i})=(i_1,i_2,\cdots,i_{k-1},i_{k+1},i_k,i_{k+2},\cdots,i_n).
	 	\end{equation} 
	 \end{itemize}
\end{definition}

\begin{definition}[Tensor network diagrams]\label{def: tensor network diagrams}
	Suppose $V$ is a $d$-dimensional vector space and $\mathscr{P}_\bullet$ is the spin model associated with $V$. Let $\Hom(n,m)$ be the space of linear transformations from $V^{\otimes n}$ to $V^{\otimes m}$. For $T\in\Hom(n,m)$, we represent it as follows,
	\begin{figure}[H]
		\begin{tikzpicture}
		\draw (.2,1.1)--(.2,-.5);
		\draw (.4,1.1)--(.4,-.5);
		\draw (.8,1.1)--(.8,-.5);
		\draw [dotted, thick](.45,-.25)--(.75,-.25);
		\draw [dotted, thick](.45,.85)--(.75,.85);
		\draw [fill=white] (0,0) rectangle (1,.6);
		\node at (.5,.3) {$T$};
		\node at (.1,.3) [left] {$\$$};
		\draw[decoration={brace,mirror,raise=5pt},decorate, thick, blue]
		(.1,-.4) -- node[below=6pt] {$m$} (.9,-.4);
		\draw[decoration={brace,raise=5pt},decorate, thick, blue]
		(.1,1) -- node[above=6pt] {$n$} (.9,1);
		\end{tikzpicture}
	\end{figure}
    Since $V$ is a finite-dimensional vector space, the dual space $V^*$ is naturally identified with $V$. Therefore, the space $\Hom(n,m)$ can be identified with $\Hom(0,n+m)$. In particular, for a rank-$n$ tensor $x\in\mathscr{P}_n$, one can represent it as follows:
    \begin{figure}[H]
    	\begin{tikzpicture}
    	\draw (0,0) rectangle (1,.6);
    	\node at(.5,.3) {$x$};
    	\node at (.1,.3) [left] {$\$$};
    	\draw (.2,0)--(.2,-.5);
    	\draw (.4,0)--(.4,-.5);
    	\draw (.8,0)--(.8,-.5);
    	\draw [dotted, thick](.45,-.25)--(.75,-.25);
    	\draw[decoration={brace,mirror,raise=5pt},decorate, thick, blue]
    	(.1,-.4) -- node[below=6pt] {$n$} (.9,-.4);
    	\end{tikzpicture}
    \end{figure}
\end{definition}

\begin{notation}\label{not: thick strings}
	For $n\in\mathbb{N}$, we use a thick string with $n$ labeled on the side to represent $n$ parallel strings as follows,
	\begin{equation}
	\begin{tikzpicture}
	\draw [ultra thick] (-1,0)--(-1,.5);
	\node at (-.95,.25)[left] {$n$};
	\node at (-.5,.25) {$=$};
	\draw [ultra thick] (0,0)--(0,.5);
	\node at (-.05,.25) [right] {$n$};
	\node at (.6,.25) {$=$};
	\draw (1,0)--(1,.5);
	\draw (1.2,0)--(1.2,.5);
	\draw (1.6,0)--(1.6,.5);
	\draw [dotted, thick](1.25,.25)--(1.55,.25);
	\draw[decoration={brace,mirror,raise=5pt},decorate, thick, blue]
	(1,.1) -- node[below=6pt] {$n$} (1.6,.1);
	\end{tikzpicture}
	\end{equation}
\end{notation}

With Notation \ref{not: thick strings}, Tensor product \eqref{equ: tensor product} and Contraction \eqref{equ: contraction} in Definition \ref{def: tensor networks} are represented as follows:
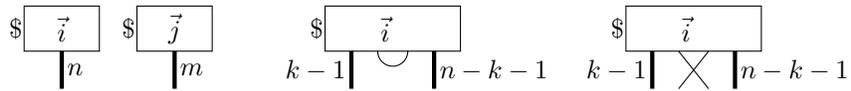
\begin{figure}[H]
	\begin{tikzpicture}
		\draw (0,0) rectangle (1,.6);
	\draw (1.5,0) rectangle (2.5,.6);
	\node at (.1,.3) [left] {$\$$};
	\node at (1.6,.3) [left] {$\$$};
	\node at (.5,.3) {$\vec{i}$};
	\node at (2,.3) {$\vec{j}$};
	\draw [ultra thick] (.5,0)--(.5,-.5);
	\node at (.45,-.25) [right] {$n$};
	\draw [ultra thick] (2,0)--(2,-.5);
	\node at (1.95,-.25) [right] {$m$};
	   \draw (4+0,0) rectangle (4+1.8,.6);
	\draw (4+.7,0) arc [radius=.2,start angle=180, end angle=360];
	\draw [ultra thick](4+.35,0)--(4+.35,-.5);
	\draw [ultra thick] (4+1.45,0)--(4+1.45,-.5);
	\node at (4+.4,-.25)[left] {$k-1$};
	\node at (4+1.4,-.25) [right] {$n-k-1$};
	\node at (4+.1,.3) [left] {$\$$};
	\node at (4+.8,.3) {$\vec{i}$};
		\draw (8+0,0) rectangle (8+1.8,.6);
	\draw (8+.7,0)--(8+1.1,-.5);
	\draw (8+1.1,0)--(8+.7,-.5);
	\draw [ultra thick](8+.35,0)--(8+.35,-.5);
	\draw [ultra thick] (8+1.45,0)--(8+1.45,-.5);
	\node at (8+.4,-.25)[left] {$k-1$};
	\node at (8+1.4,-.25) [right] {$n-k-1$};
	\node at (8+.1,.3) [left] {$\$$};
	\node at (8+.8,.3) {$\vec{i}$};
	\end{tikzpicture}\caption{Tensor product, contraction, permutation.}
\end{figure}
Therefore, a rank-$n$ tensor is represented by a labeled rectangle with $n$ strings attached and a dollar sign $\$$ on the left side ( which will be omitted when no confusion) and a tensor network diagram is an element in $\mathscr{P}_\bullet$. From the perspective of tensor category theory, a rank-$n$ tensor can be realized as a morphism from the unit object $\mathbb{1}\cong\mathbb{C}$ to the object $V^{\otimes n}$ in the symmetric monoidal category $\vect$. The tensor product of two tensors comes naturally from the tensor functor $\otimes$. The contraction is realized by the \textit{coevaluation} and the self-duality of the object $V$. The permutation is realized by the symmetric braiding $C_{V,V}\in\Hom(V\otimes V, V\otimes V)$. Note that the symmetric braiding can be viewed as a morphism in $\Hom(\mathbb{1},V^{\otimes 4})$, namely, a rank-$4$ tensor. Therefore, we have that
\begin{definition}\label{def: transposition}
	The transposition $R$ is an element in $\mathscr{P}_{4}$ defined as follows: 
	\begin{equation}
	R=\sum_{i,j=1}^d(i,j,i,j).
	\end{equation}
	Furthermore, the transposition $R$ is represented by the following diagram:
	\begin{figure}[H]
		\begin{tikzpicture}
		\draw (0,0)--(.8,.8);
		\draw (0,.8)--(.8,0);
		\end{tikzpicture}.
	\end{figure}
\end{definition}
Moreover, the object $V$ is a Frobenius algebra object in the symmetric monoidal category $\vect$ provided with the so-called $\GHZ$ tensor as follows.
\begin{definition}\label{def: GHZ}
	The \text{Greenberger–Horne–Zeilinger $(\GHZ)$} tensor is a rank-$3$ tensor in $\mathscr{P}_3$ defined as follows,
	\begin{equation}
	\GHZ=\sum_{j=1}^{d} (j,j,j).
	\end{equation}
	Furthermore, the $\GHZ$ tensor is represented by the following diagram:
	\begin{figure}[H]
		\begin{tikzpicture}
		\foreach \i in {1,...,3}
		{
			\pgfmathsetmacro{\a}{cos(360*(\i-1)/3+90)};
			\pgfmathsetmacro{\b}{sin(360*(\i-1)/3+90)};
			\draw(0,0)--(.5*\a,.5*\b);
		}
		\draw [fill=black] (0,0) circle [radius=.05];
		\end{tikzpicture}.
	\end{figure} 
\end{definition}
The relations for a Frobenius algebra is presented in Proposition \ref{pro: relation for GHZ}. Therefore, the spin model $\mathscr{P}_\bullet$ can be realized as the even part of the spin model planar algebra \cite{Jon99}. In this paper, we investigate submodels of spin models which are invariant under group symmetry. 
\begin{definition}[Group-action models]\label{def: groupaction models}
	Let $V$ and $\mathscr{P}_\bullet$ be as in Definition \ref{def: tensor networks}. Suppose $G$ is a subgroup of the symmetric group $S_d$. The group $G$ has a natural action on the vector space $V$ which can be extended to $V^{\otimes n}$ diagonally for every $n\in\mathbb{N}$. Moreover, the extended action of $G$ commutes with tensor product \eqref{equ: tensor product}, contraction \eqref{equ: contraction} and the permutation \eqref{equ: permutation}, namely, this induces an action of the group $G$ on the spin model $\mathscr{P}_\bullet$. Therefore the group-action model, denoted by $\mathscr{P}_\bullet^G$, is the fixed-points of the group action, i.e.,  
	\begin{equation}\label{equ: def of group action model}
	\mathscr{P}^G_n=\{x\in \mathscr{P}_n\colon g\cdot x=x,\forall g\in G\}.
	\end{equation}
\end{definition}
\begin{proposition}\label{pro: GHZ and R for group action}
	The GHZ tensor and the transposition $R$ belong to $\mathscr{P}_\bullet^G$ for any group $G\leq S_d$. 
\end{proposition}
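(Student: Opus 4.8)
The plan is to verify the two invariance conditions directly from the definitions by a reindexing argument. Recall that $G\leq S_d$ acts on the basis by $g\cdot e_i=e_{g(i)}$ and diagonally on basic tensors, so that $g\cdot(i_1,\ldots,i_n)=(g(i_1),\ldots,g(i_n))$. The essential observation is that both $\GHZ$ and $R$ are sums over \emph{all} basic tensors exhibiting a fixed repetition pattern of indices, and such a pattern is preserved when a single permutation is applied simultaneously to every coordinate. Hence permuting the summation index only rearranges the terms of the sum.

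First I would compute the action on the $\GHZ$ tensor. Applying $g$ and using linearity of the action gives
\begin{equation*}
g\cdot\GHZ=\sum_{j=1}^d (g(j),g(j),g(j)).
\end{equation*}
Since $g$ is a bijection of $\{1,\ldots,d\}$, the substitution $k=g(j)$ shows the right-hand side equals $\sum_{k=1}^d (k,k,k)=\GHZ$, so $\GHZ\in\mathscr{P}_3^G$.

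Next I would treat the transposition in the same way. We have
\begin{equation*}
g\cdot R=\sum_{i,j=1}^d (g(i),g(j),g(i),g(j)),
\end{equation*}
and because $(i,j)\mapsto(g(i),g(j))$ is a bijection of $\{1,\ldots,d\}^2$, the substitution $k=g(i)$, $l=g(j)$ recovers $\sum_{k,l=1}^d (k,l,k,l)=R$, so $R\in\mathscr{P}_4^G$.

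There is no genuine obstacle here: the statement follows from the fact that the defining sums for $\GHZ$ and $R$ range over complete collections of index tuples closed under the diagonal $G$-action, so that reindexing by $g$ leaves the sum unchanged. The only point requiring care is that the action on $V^{\otimes n}$ is the \emph{diagonal} one (the same $g$ applied to every tensor factor), which is exactly what makes the repetition patterns $(j,j,j)$ and $(i,j,i,j)$ stable under $g$.
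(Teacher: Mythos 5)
Your argument is correct and coincides with the paper's own proof: both verify invariance of $\GHZ$ by reindexing the sum via the bijection $j\mapsto g(j)$, and the paper simply states that $R$ is handled ``similarly,'' which is exactly the two-index reindexing you carry out explicitly. No gaps.
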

\begin{proof}
	By definition, we have that for $g\in G$
	\begin{equation*}
	g\cdot\GHZ=\sum_{j=1}^d g\cdot (j,j,j)=\sum_{j=1}^d (g\cdot j,g\cdot j,g\cdot j)=\sum_{j=1}^{d} (j,j,j)=\GHZ.
	\end{equation*}
	Therefore, $\GHZ\in\mathscr{P}_3^G$ and similarly $R\in\mathscr{P}_4^G$. 
\end{proof}
By Proposition \ref{pro: GHZ and R for group action}, a group-action model $\mathscr{P}^G_\bullet$ can be realized as the even part of a planar algebra. The corresponding planar algebra is described in \cite{Jon12}. If the group action is transitive, the planar algebra is the group-subgroup subfactor planar algebra for $H\leq G$, where $H$ is the stabilizer of a single point. The $\GHZ$ tensor is indeed a Frobenius algebra as proved in Proposition \ref{pro: relation for GHZ} and Corollary \ref{cor: GHZ}. In the framework of planar algebras, the $\GHZ$ tensor is represented as follows:
\begin{figure}[H]
	\begin{tikzpicture}
	\path [fill=lightgray] (-.4,0)--(-.4,.4) arc[radius=.2, start angle=180, end angle=90]--(0,.6) arc[radius=.2, start angle=270, end angle=360]--(.2,1.2)--(.6,1.2)--(.6,.8) arc[radius=.2, start angle=180, end angle=270]--(1,.6) arc[radius=.2,start angle=90, end angle=0]--(1.2,0)--(.8,0) arc [radius=.4, start angle=0, end angle=180];
	\draw (0,0) arc [radius=.4, start angle=180, end angle=0];
	\draw (-.4,0)--(-.4,.4) arc[radius=.2, start angle=180, end angle=90]--(0,.6) arc[radius=.2, start angle=270, end angle=360]--(.2,1.2);
	\draw (1.2,0)--(1.2,.4) arc[radius=.2, start angle=0, end angle=90]--(.8,.6) arc[radius=.2, start angle=270, end angle=180]--(.6,1.2);
	\end{tikzpicture}
\end{figure}
Therefore, the relations in Proposition \ref{pro: relation for GHZ} and Corollary \ref{cor: GHZ} turn to purely planar isotopy. Moreover, a rank-$n$ tensor $x$ in Definition \ref{def: tensor network diagrams} is represented by the following diagram
\begin{figure}[H]
	\begin{tikzpicture}
	\path [fill=lightgray](.4,0) rectangle (.8,-.8);
	\path [fill=lightgray] (1.2,0) rectangle (1.6,-.8);
	\path [fill=lightgray] (2.6,0) rectangle (2.2,-.8);
	\draw (0,0) rectangle (3,1.2);
	\node at (1.5,.6) {$x$};
	\node at (.1,.6)[left] {$\$$};
	\draw (.4,0)--(.4,-.8);
	\draw (.8,0)--(.8,-.8);
	\draw (1.6,0)--(1.6,-.8);
	\draw (1.2,0)--(1.2,-.8);
	\draw (2.2,0)--(2.2,-.8);
	\draw (2.6,0)--(2.6,-.8);
	\draw [thick, dotted] (1.7,-.4)--(2.1,-.4);
	\draw[decoration={brace,mirror,raise=5pt},decorate, thick, blue]
	(.4,-.8) -- node[below=6pt] {$2n$} (2.6,-.8);
	\end{tikzpicture}
\end{figure}
Here, a single string is doubled to be two parallel strings and we put a checkerboard shading on the regions separated by those strings such that the region where the dollar sign sits in is unshaded. Such diagrams form the even part of the subfactor planar algebra. Besides, one can have diagrams whose dollar sign sits in a shaded region and such diagrams form the odd part of the subfactor planar algebra. The even part and odd one are related diagrammatically by the $1$-click rotation which is known as the Fourier transform on subfactors.
\section{The universal skein theory for group-action models}\label{sec: The universal skein theory for group action}
In this section, we discuss group-action models as in Definition \ref{def: groupaction models} and provide a universal skein theory for such models in the sense that there is a group-parametrized family of skein theories which completely describes the group-action models.
\begin{assumption}
	Let $V$ be a vector space of dimension $d$ and $G$ be a subgroup of the symmetric group $S_d$ and the group-action model associated with $G$ by $\mathscr{P}_\bullet^G$. In this case, we have the circle parameter as follows:
	\begin{equation}\label{equ: circle parameter}
	\begin{tikzpicture}
	\draw (0,0) circle [radius=.3];
	\node at (.7,0) {$=d.$};
	\end{tikzpicture}
	\end{equation}
\end{assumption}
By the definition of group-action models, the space $\mathscr{P}^G_{n}$ is spanned by the orbits of basic tensors under the action of $G$ for every $n\in\mathbb{N}$. Therefore, we introduce the following notation:
\begin{notation}
	For $n\in\mathbb{N}$ and a basic rank-$n$ tensor $\vec{i}=(i_1,i_2,\cdots,i_n)\in\mathscr{P}_{n}$ 
	\begin{equation}
	[\vec{i}]=[i_1,i_2,\cdots,i_n]=\sum_{g\in G}(g\cdot i_1,g\cdot i_2,\cdots,g\cdot i_n).
	\end{equation}
\end{notation}
From Proposition \ref{pro: GHZ and R for group action}, we know that $\mathscr{P}^G_\bullet$ contains the $\GHZ$ tensor and the transposition $R$. It is shown in \cite{Jon94} that the group-action model $\mathscr{P}^{S_d}_\bullet$ is generated by $\GHZ$ and $R$. For any subgroup $G$ of $S_d$, we have that $\mathscr{P}^{S_d}_\bullet\subset \mathscr{P}_\bullet^G$. Therefore, it is crucial to understand the skein relations of the $\GHZ$ tensor and $R$ in $\mathscr{P}_\bullet$ first.
\begin{proposition}\label{pro: symmetric braiding}
	The transposition $R$ is a symmetric braiding, i.e., it satisfies the following relations:
	\begin{itemize}
		\item Type $\I$ Reidemeister move,
		\begin{equation}\label{equ: type I R move}
		\begin{tikzpicture}
		\draw  (0,1)--(.75,.25) arc [radius=.25*1.414,start angle=225, end angle=495]--(0,0);
		\node at (1.5,.5) [right] {$=$};
		\draw  (2.5,0)--(2.5,1);
		\end{tikzpicture}~.
		\end{equation}
		\item Type $\II$ Reidemeister move,
		\begin{equation}\label{equ: type II R move}
		\begin{tikzpicture}
		\draw (0,1.2) arc [radius=1.2/1.732, start angle=60, end angle=-60];
		\draw (.4,1.2) arc [radius=1.2/1.732, start angle=120, end angle=240];
		\node at (.6,.6) [right] {$=$};
		\draw (1.4,0)--(1.4,1.2);
		\draw (1.9,0)--(1.9,1.2);
		\end{tikzpicture}~.
		\end{equation}		
		\item Type $\III$ Reidemeister move,
		\begin{equation}\label{equ: type III R move}
		\begin{tikzpicture}
		\draw (0,0)--(1,1);
		\draw (0,1)--(1,0);
		\draw (.35,0)--(.35,1);
		\node at (1.2,.5) [right] {$=$};
		\draw (2,0)--(3,1);
		\draw (2,1)--(3,0);
		\draw (2.65,0)--(2.65,1);
		\end{tikzpicture}~.
		\end{equation}
     \item Suppose $x\in\mathscr{P}_{m+n}$ for $m,n\geq0$.
     \begin{equation}\label{equ: flatness}
\begin{tikzpicture}
\draw [ultra thick](.5,1.5)--(.5,-.5);
\node at (.45,1.4) [right] {$m$};
\node at (.45,-.4) [right] {$n$};
\draw (-.4,1)--(1.4,1);
\draw [fill=white] (0,0) rectangle (1,.6);
\node at (.5,.3) {$x$};
\node at (.1,.3) [left] {$\$$};
\node at (1.5,.5) [right]{$=$};
\draw [ultra thick](3,-.5)--(3,1.5);
\draw [fill=white] (2.5,1) rectangle (3.5,.4);
\node at (2.95,1.4)[right] {$m$};
\node at (2.95,-.4) [right] {$n$};
\draw(2.1,0)--(3.9,0);
\node at (3,.7) {$x$};
\node at (2.6,.7) [left] {$\$$};
\end{tikzpicture}.
\end{equation}
	\end{itemize}
\end{proposition}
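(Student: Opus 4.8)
The plan is to prove all four relations directly from the defining formula $R=\sum_{i,j}(i,j,i,j)$ together with the contraction rule \eqref{equ: contraction}. Reading the four legs of the crossing in cyclic boundary order, the definition says exactly that $R$ has a nonzero coefficient on the basic tensor $(a,b,c,d)$ precisely when $a=c$ and $b=d$; equivalently, viewed as a morphism $V\otimes V\to V\otimes V$ it is the flip $e_a\otimes e_b\mapsto e_b\otimes e_a$, i.e. the symmetric braiding $C_{V,V}$ of $\vect$. Since each relation is an identity between tensors (elements of some $\mathscr{P}_k$), it suffices in every case to expand both sides in the basis $\{\vec{k}\}$ and compare coefficients; each comparison collapses to an elementary identity among Kronecker deltas.

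Concretely, I would handle the three Reidemeister moves as follows. For Type $\II$ \eqref{equ: type II R move}, stacking two crossings amounts to contracting the two middle legs of two copies of $R$; the resulting sum over the contracted middle indices telescopes to the Kronecker deltas identifying the bottom legs with the top legs, which is precisely the pair of parallel strands on the right --- equivalently, $C_{V,V}^2=\mathrm{id}$. For Type $\I$ \eqref{equ: type I R move}, capping the two legs of $R$ on one side with a cup $\sum_k(k,k)$ sets their indices equal and sums, giving $\sum_k\delta_{ak}\delta_{kd}=\delta_{ad}$; the two surviving legs therefore form a single through-strand, and no circle factor is produced. For Type $\III$ \eqref{equ: type III R move}, both composites of three crossings realize the same permutation of the three strands (the order-reversal in $S_3$), so after expansion the coefficients on each basic tensor agree; this is the Yang--Baxter equation, which holds automatically because $R$ is a symmetric, hence braided, structure.

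The one genuinely non-formal relation is the flatness condition \eqref{equ: flatness}, which asserts that an arbitrary box $x\in\mathscr{P}_{m+n}$ may be slid across a bundle of strands. I would prove this as the naturality of the symmetric braiding: pulling $x$ through the crossing relabels which strands carry which indices by the flip, while the coefficient of $x$ depends only on the indices attached to its own legs; hence both sides, when expanded on a basic tensor, produce the same permuted index pattern multiplied by the same coefficient of $x$. I expect this bookkeeping --- keeping track of how the braiding permutes a block of $m$ strands past a block of $n$ strands under Notation \ref{not: thick strings} --- to be the only place demanding care, but it is entirely mechanical once the flip interpretation of $R$ is fixed. A cleaner alternative, which I would also mention, is to observe that $R$ is by construction the image of $C_{V,V}$ in $\vect$, so that \eqref{equ: type I R move}--\eqref{equ: flatness} are exactly the Reidemeister moves and the naturality square satisfied by the symmetric braiding of any symmetric monoidal category, and no further computation is required.
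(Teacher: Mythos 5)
Your proposal is correct and matches the paper's approach: the paper simply states that all four relations are verified directly from the definition of $R$, and you carry out exactly that verification (expanding on basic tensors and comparing Kronecker deltas), with the observation that $R$ is the symmetric braiding $C_{V,V}$ of $\vect$ --- an identification the paper itself makes just before Definition \ref{def: transposition}. No gaps; your write-up just supplies the details the paper leaves implicit.
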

\begin{proof}
	It is straightforward to verify Equation \eqref{equ: type I R move}, \eqref{equ: type II R move}, \eqref{equ: type III R move} and \eqref{equ: flatness} from the definition of the transposition $R$.
\end{proof}
The transposition $R$ plays a crucial role the in study of spin models. The permutation \eqref{equ: permutation} is realized by this rank-$4$ tensor and therefore, this induces an action of the symmetric group $S_n$ acts on $\mathscr{P}_{n}$ for every $n\in\mathbb{N}$. 
\begin{proposition}\label{pro: symmetric group action}
	For $k\in\mathbb{N}$, let $S_{k,n}$ be the element in $\Hom(n,n)$ as follows,
	\begin{figure}[H]
		\begin{tikzpicture}
			\draw (8+.7,0)--(8+1.1,-.5);
		\draw (8+1.1,0)--(8+.7,-.5);
		\draw [ultra thick](8+.35,0)--(8+.35,-.5);
		\draw [ultra thick] (8+1.45,0)--(8+1.45,-.5);
		\node at (8+.4,-.25)[left] {$k-1$};
		\node at (8+1.4,-.25) [right] {$n-k-1$};
		\end{tikzpicture}.
	\end{figure}
Then the map $\pi$ from $S_n$ to $\Hom(n,n)$ by sending the the permutation $(k,k+1)$ to $S_{k,n}$ induces a unitary representation of $S_k$ on the space $\mathscr{P}_{n}$. 
\end{proposition}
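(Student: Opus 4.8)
The plan is to exploit the Coxeter presentation of the symmetric group. Recall that $S_n$ is generated by the adjacent transpositions $s_k=(k,k+1)$ for $1\le k\le n-1$, subject to the relations $s_k^2=e$, the braid relation $s_k s_{k+1} s_k = s_{k+1} s_k s_{k+1}$, and the commutation relation $s_k s_l = s_l s_k$ whenever $|k-l|\ge 2$. By the universal property of this presentation, to show that the assignment $s_k\mapsto S_{k,n}$ extends to a group homomorphism $\pi\colon S_n\to\Hom(n,n)$ it suffices to verify that the operators $S_{k,n}$ satisfy exactly these three families of relations. Each relation is matched with a fact already established for the transposition $R$ in Proposition \ref{pro: symmetric braiding}.

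First, the relation $S_{k,n}^2=\mathrm{id}$ is precisely the Type $\II$ Reidemeister move \eqref{equ: type II R move} applied to the $k$-th and $(k+1)$-th strings, the remaining $n-2$ strings being carried through unchanged. Second, the braid relation $S_{k,n}S_{k+1,n}S_{k,n}=S_{k+1,n}S_{k,n}S_{k+1,n}$ is the Type $\III$ Reidemeister move \eqref{equ: type III R move} applied to the three consecutive strings indexed $k,k+1,k+2$, again with all other strings untouched. Third, when $|k-l|\ge 2$ the diagrams $S_{k,n}$ and $S_{l,n}$ act on disjoint pairs of strings; by the locality of the diagrammatic calculus for tensor networks the corresponding operations may be performed in either order, so $S_{k,n}S_{l,n}=S_{l,n}S_{k,n}$. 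These three verifications show that $\pi$ is a well-defined representation of $S_n$ on $\mathscr{P}_n$.

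It remains to check unitarity. Reading off Definitions \ref{def: transposition} and the permutation \eqref{equ: permutation}, the operator $S_{k,n}$ sends a basic tensor $(i_1,\ldots,i_n)$ to $(i_1,\ldots,i_{k-1},i_{k+1},i_k,i_{k+2},\ldots,i_n)$; that is, it merely permutes the orthonormal basis $\{(i_1,\ldots,i_n)\}$ of $\mathscr{P}_n=V^{\otimes n}$. Hence each generator $S_{k,n}$ is a real permutation matrix, in particular unitary. Since $\pi(g)$ is a product of such generators for every $g\in S_n$, and a product of unitaries is unitary, $\pi$ takes values in the unitary group, as claimed.

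The argument is essentially routine once the Reidemeister moves of Proposition \ref{pro: symmetric braiding} are in hand. The only point requiring care is the commutation relation for non-adjacent transpositions, which is not among the listed Reidemeister moves but follows immediately from the disjointness of the strings on which $S_{k,n}$ and $S_{l,n}$ act; flatness \eqref{equ: flatness} makes this independence of ordering precise at the level of the tensor-network calculus.
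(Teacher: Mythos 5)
Your proof is correct, but it takes a more structural route than the paper. The paper's own argument is a one-line observation: writing out $S_{k,n}(i_1,\dots,i_n)=(i_1,\dots,i_{k-1},i_{k+1},i_k,i_{k+2},\dots,i_n)$ shows that each $S_{k,n}$ permutes the orthonormal basis of basic tensors (hence is unitary) and that the assignment realizes the standard action of $S_n$ permuting tensor factors of $V^{\otimes n}$, from which well-definedness of $\pi$ is immediate. You instead verify well-definedness via the Coxeter presentation of $S_n$, matching the involutivity and braid relations to the Type $\II$ and Type $\III$ Reidemeister moves of Proposition \ref{pro: symmetric braiding} and handling the far-commutation relation by disjointness of strings. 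This is more work but makes explicit a point the paper leaves implicit, and it has the virtue of being purely diagrammatic: it would apply verbatim to any collection of operators satisfying those skein relations, not just to the concrete permutation operators. One small caution: citing flatness \eqref{equ: flatness} for the far-commutation relation is not quite the right reference — that relation concerns sliding a box past a horizontal string — whereas the commutation you need is simply the interchange law for tensor products, i.e.\ $S_{k,n}=\mathrm{id}^{\otimes(k-1)}\otimes\tau\otimes\mathrm{id}^{\otimes(n-k-1)}$ and $S_{l,n}$ act on disjoint tensor factors; this is immediate and needs no appeal to \eqref{equ: flatness}. Your unitarity argument coincides with the paper's.
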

\begin{proof}
	By definition, the space $\Hom(n,n)$ consists of all the linear transforms on $\mathscr{P}_n$. For $(i_1,i_2,\cdots,i_n)\in \mathscr{P}_n$,
	\begin{equation}
	S_{k,n}((i_1,i_2,\cdots,i_n))=(i_1,i_2,\cdots,i_{k-1},i_{k+1},i_{k},i_{k+2},\cdots,i_n).
	\end{equation}
	It follows directly that $S_{k,n}$ defines a unitary on $\mathscr{P}_{n}$ and furthermore the map $\pi$ is a unitary representation of $S_n$ on $\mathscr{P}_{n}$. 
\end{proof}
Since the symmetric group $S_n$ is generated by transpositions, $\pi(g)$ can be represented diagrammatically for every $g\in S_n$ and we denote it as follows,
	\begin{figure}[H]
		\begin{tikzpicture}
		\draw [ultra thick] (.5,1.1)--(.5,-.5);
		\draw [fill=white] (0,0) rectangle (1,.6);
		\node at (.5,.3) {$g$};
		\node at (.45,-.25)[right] {$n$};
		\node at (.45,.85)[right] {$n$};
		\end{tikzpicture}.
	\end{figure}
Now we discuss the relations for the $\GHZ$ tensor. First we introduce a notation for the contraction of $\GHZ$ tensor with two of its three indices as follows,
	\begin{equation}\label{equ: single cap}
	\begin{tikzpicture}
	\draw [thick] (0,0)--(0,.5);
	\draw [fill=black] (0,.5) circle [radius=.05];
	\node at (.75,.25) {$=$};
	\draw [thick](1.5,0)--(1.5,.5);
	\draw [thick](1.5,.5)--(1.25,.75) arc [radius=.25*1.414, start angle=225, end angle=-45]--(1.5,.5);
	\draw [fill=black] (1.5,.5) circle [radius=.05];
	\end{tikzpicture}
	\end{equation}
\begin{proposition}\label{pro: relation for GHZ}
	We have the following relations for the $\GHZ$ tensor,
	\begin{equation}\label{equ: HI relation}
	\begin{tikzpicture}[scale=.7]
	\draw [thick] (0,0)--(.5,-.5)--(1,0);
	\draw [thick] (.5,-.5)--(1,-1)--(2,0);
	\draw [thick] (1,-1)--(1,-1.5);
	\node at (2.5,-.75) {$=$};
	\draw [thick](3,0)--(4,-1)--(5,0);
	\draw [thick] (4,0)--(4.5,-.5);
	\draw [thick] (4,-1.5)--(4,-1);
	\draw [fill=black] (.5,-.5) circle [radius=.05];
	\draw [fill=black] (1,-1) circle [radius=.05];
	\draw [fill=black] (4.5,-.5) circle [radius=.05];
	\draw [fill=black] (4,-1) circle [radius=.05];
	\end{tikzpicture}
	\end{equation}
	\begin{equation}\label{equ: normalized}
	\begin{tikzpicture}
	\draw [thick] (0,-.6)--(0,.6);
	\draw [thick,fill=white](0,0) circle [radius=.3];
	\node at (1,0) {$=$};
	\draw [thick](1.7,-.6)--(1.7,.6);
	\draw [fill=black] (0,.3) circle [radius=.05];
	\draw [fill=black] (0,-.3) circle [radius=.05];
	\end{tikzpicture}
	\end{equation}
\end{proposition}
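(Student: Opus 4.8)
The plan is to verify both identities by direct computation, translating each diagram into a contraction of copies of the $\GHZ$ tensor and reading off the resulting basic tensors from the definition $\GHZ=\sum_{j=1}^d(j,j,j)$. The feature driving everything is that $\GHZ$ is supported on the diagonal: contracting a leg of one $\GHZ$ against a leg of another forces, via the Kronecker deltas in \eqref{equ: contraction}, all the indices meeting at a connected cluster of $\GHZ$ vertices to coincide. Thus the whole proof reduces to bookkeeping which legs are joined to which.

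For the associativity relation \eqref{equ: HI relation}, I would read each side as a rank-$4$ tensor with three ``input'' legs along the top and one ``output'' leg at the bottom. On the left-hand side the lower-left $\GHZ$ vertex merges the first two inputs into an internal edge, producing a sum over $j$ of configurations in which the first input, the second input and the internal edge all equal $j$; the second $\GHZ$ then merges this internal edge with the third input and the output. Performing the contraction along the internal edge collapses everything to $\sum_{j=1}^d (j,j,j,j)$. Running the same bookkeeping on the right-hand side, where the pair merged first is instead the second and third inputs, yields the identical tensor $\sum_{j=1}^d (j,j,j,j)$. Since both diagrams produce the same element of $\mathscr{P}_4$, the relation follows.

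For the normalized bubble relation \eqref{equ: normalized}, I would interpret the left-hand side as two $\GHZ$ vertices joined by a two-strand bubble, each vertex also carrying one external leg. Writing the bottom vertex as $\sum_i(i,i,i)$ and the top vertex as $\sum_k(k,k,k)$ and contracting the two bubble strands, the two Kronecker deltas $\delta_{i,k}\delta_{i,k}$ force $i=k$ and leave $\sum_{i=1}^d (i,i)$, which is exactly the identity transformation on $V$, that is, the single vertical string on the right-hand side.

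I do not expect a genuine obstacle here: both statements are immediate once the diagrams are unwound into index contractions, and the only care required is tracking which legs are contracted against which. The substantive content—that $\GHZ$ equips $V$ with a (special, commutative) Frobenius algebra structure—is precisely encoded in these two computations, together with the evident invariance of $\GHZ$ under permuting its three legs.
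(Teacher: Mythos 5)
Your proof is correct and follows the same route as the paper, which simply states that both identities are ``straightforward to verify from the definition of the $\GHZ$ tensor''; you have carried out exactly that direct index computation, correctly obtaining $\sum_{j=1}^d(j,j,j,j)$ for both sides of \eqref{equ: HI relation} and using the idempotence of the Kronecker deltas to collapse the bubble in \eqref{equ: normalized} to the identity $\sum_{i=1}^d(i,i)$ with no extraneous factor of $d$.
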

\begin{proof}
		It is straightforward to verify Equation \eqref{equ: HI relation} and \eqref{equ: normalized} from the definition of $\GHZ$ tensor.
\end{proof}
\begin{corollary}\label{cor: GHZ}
	We have the following relation:
	\begin{equation}\label{equ: unit}
	\begin{tikzpicture}
	\draw [thick] (-.5,.5)--(0,0)--(.5,.5);
	\draw [thick,fill=black] (0,0)--(0,-.5) circle [radius=.05];
	\node at (1,0) {$=$};
	\draw [thick](1.5,.25) arc[radius=.5, start angle=180, end angle=360];
	\draw [fill=black] (0,0) circle [radius=.05];
	\end{tikzpicture}
	\end{equation}
\end{corollary}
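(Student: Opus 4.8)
The plan is to prove \eqref{equ: unit} in two complementary ways: a one-line check from Definition \ref{def: GHZ}, and a purely diagrammatic derivation from Proposition \ref{pro: relation for GHZ} together with the single-cap notation \eqref{equ: single cap}. Since the surrounding relations are all verified directly from the definition, I would first record the computational confirmation: by \eqref{equ: single cap} the black dot terminating the lower leg is the vector $\sum_{k=1}^d (k)$, so contracting it against the third index of $\GHZ=\sum_{j=1}^d (j,j,j)$ gives $\sum_{j,k}\delta_{j,k}(j,j)=\sum_{j=1}^d(j,j)$, which is exactly the cup on the right-hand side. This settles the identity, but it does not exhibit the statement as a consequence of the already-established skein relations, which is the reason for recording it as a corollary.

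For the diagrammatic derivation, which I would feature, I proceed in three steps. First, I rewrite the single cap on the lower leg using \eqref{equ: single cap}, so that the left-hand side becomes two $\GHZ$ vertices joined by a single edge, the lower vertex carrying a cap on its two remaining legs $P,Q$ and the upper vertex carrying the two free boundary legs $L$ and $R$. Second, I apply the associativity relation \eqref{equ: HI relation} to this two-vertex tree: regrouping the four external legs from the pairing $\{L,R\,|\,P,Q\}$ to the pairing $\{L,P\,|\,R,Q\}$ sends one capped leg to each vertex, so the cap $P\!-\!Q$ now runs between the two distinct vertices and, together with the original connecting edge, forms a bubble with $L$ and $R$ dangling from opposite sides. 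Third, I collapse this bubble by the normalization relation \eqref{equ: normalized}, which leaves a single string joining $L$ to $R$, namely the cup.

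The main obstacle is the middle step: one must check that the chosen reassociation is compatible with planarity, i.e. that grouping ``left with left, right with right'' is an admissible instance of \eqref{equ: HI relation} and does not force a crossing (the alternative pairing $\{L,Q\,|\,R,P\}$ would). This is where the full rotational symmetry of the $\GHZ$ tensor enters—the same symmetry noted in the text that turns the Frobenius relations into planar isotopy—so that \eqref{equ: HI relation} and \eqref{equ: normalized} may be invoked in the rotated orientation forced by the two upward legs and the downward cap. Once planarity is confirmed the two applications are routine, and the computational check above serves as an independent guarantee that the bubble-collapse produces the cup itself rather than a scalar multiple of it.
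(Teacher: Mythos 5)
Your diagrammatic derivation---expanding the black dot via \eqref{equ: single cap} into a second $\GHZ$ vertex with a self-loop, reassociating with \eqref{equ: HI relation} so the loop becomes a digon between the two vertices, and collapsing the digon with \eqref{equ: normalized}---is exactly the chain of equalities in the paper's proof, and your preliminary coordinate computation is a correct (if redundant) confirmation. The planarity worry you raise is moot since the $\GHZ$ tensor is fully symmetric in its indices, but your resolution via that symmetry is sound.
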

\begin{proof}
	By Equation \eqref{equ: single cap}, Equation \eqref{equ: HI relation} and \eqref{equ: normalized} we have that
	\begin{equation*}
	\begin{tikzpicture}
	\draw [thick](-.5,.5)--(0,0)--(.5,.5);
	\draw [fill=black] (0,0) circle [radius=.05];
	\draw [thick, fill=black] (0,0)--(0,-.5) circle [radius=.05];
	\node at (1,0) {$=$};
	\draw [thick](2,0)--(1.75,-.25) arc [ radius=.25*1.414, start angle=135, end angle=395]--(2,0)--(2,.25)--(1.5,.75)--(2,.25)--(2.5,.75);
	\draw [fill=black] (2,0) circle [radius=.05];
	\draw [fill=black] (2,.25) circle [radius=.05];
	\node at (3,0) {$=$};
	\draw [thick] (4,0)--(3.75,-.25) arc[radius=.25, start angle=135, end angle=270]--(4.75-.25*.707,-.25*2.707) arc [radius=.25, start angle=270, end angle=395]--(4.5,0)--(4,0)--(3.5,.5);
	\draw [fill=black] (4.5,0) circle [radius=.05];
	\draw [fill=black] (4,0) circle [radius=.05];
	\draw [thick] (4.5,0)--(5,.5);
	\node at (5.5,0) {$=$};
	\draw [thick] (6,.25) arc [radius=.5, start angle=180, end angle=360];
	\end{tikzpicture}
	\end{equation*}
\end{proof}

For a nontrivial subgroup $G\leq S_d$, we have that $\mathscr{P}_\bullet^{S_d}$ sits in $\mathscr{P}_\bullet^G$ as a proper submodel. It turns out that there is exactly one more generator for the group-action model $\mathscr{P}_\bullet^G$ other than $\GHZ$ and the transposition $R$. 
\begin{theorem}\label{thm: generator}
	Let $V$ be a vector space of dimension $d$ and $G$ be a subgroup of the symmetric group $S_d$. Suppose $\mathscr{P}_\bullet^G$ is the group-action model as in Definition \ref{def: groupaction models}. Then $\mathscr{P}_\bullet^G$ is generated by
	\begin{itemize}
		\item $\GHZ$ tensor.
		\item The transposition $R$.
		\item The molecule $S$ defined as follows,
		\begin{equation}\label{equ: full orbit}
		S=[1,2,\cdots, d].
		\end{equation}
	\end{itemize}
\end{theorem}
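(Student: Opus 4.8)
The plan is to reduce the statement to building a single spanning vector of $\mathscr{P}^G_n$ out of the three generators, and then to realize that vector by a short explicit diagram. As noted just before the notation $[\vec{i}]$, the fixed-point space $\mathscr{P}^G_n$ is spanned by the orbit sums $[i_1,\dots,i_n]=\sum_{g\in G}(g\cdot i_1,\dots,g\cdot i_n)$ with $i_1,\dots,i_n\in\{1,\dots,d\}$. Hence it suffices to exhibit, for every finite sequence $\vec{i}$ of labels, a tensor network built from $\GHZ$, the transposition $R$, and the molecule $S=[1,2,\dots,d]$ of \eqref{equ: full orbit}, using only tensor product and contraction, whose value equals $[\vec{i}]$.

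First I would assemble two elementary gadgets out of the $\GHZ$ tensor alone. Using the Frobenius relations of Proposition \ref{pro: relation for GHZ} and Corollary \ref{cor: GHZ}, contracting one leg of $\GHZ$ into a string of a diagram \emph{duplicates} that string, producing two output strings carrying the same index; iterating along a tree yields a \emph{copy map} turning a single string of value $v$ into any number $m\ge 1$ of strings all carrying $v$. Dually, the single cap of \eqref{equ: single cap} is the uniform covector $\sum_{j=1}^{d}(j)$, so contracting a string against it sums over that index and thereby \emph{discards} the string, contributing only the scalar $1$. Both gadgets lie in the planar subalgebra generated by $\GHZ$, and, being built from the three tensor operations, they commute with the diagonal $G$-action, so they act term-by-term on an orbit sum.

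The heart of the argument is then a direct computation on $S$. Writing $S=\sum_{g\in G}(g\cdot 1,\dots,g\cdot d)$, the $k$-th string of $S$ carries the value $g\cdot k$ in the summand indexed by $g$, and the assignment $g\mapsto (g\cdot 1,\dots,g\cdot d)$ is injective, so $S$ literally records the whole subgroup $G$. Given an arbitrary $\vec{i}=(i_1,\dots,i_n)$, I would apply the discard gadget to every string of $S$ whose label does not occur in $\vec{i}$, the copy gadget to each string whose label occurs with multiplicity, and finally a permutation, realized by $R$ (which generates the full symmetric-group action by Proposition \ref{pro: symmetric group action}), to reorder the surviving strings into the order $i_1,\dots,i_n$. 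This transforms each summand $(g\cdot 1,\dots,g\cdot d)$ into $(g\cdot i_1,\dots,g\cdot i_n)$ \emph{simultaneously} for all $g\in G$, so the resulting diagram equals $[\vec{i}]$. Since every orbit sum arises this way and orbit sums span $\mathscr{P}^G_n$, the three tensors generate $\mathscr{P}^G_\bullet$.

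The step requiring the most care is the bookkeeping in the previous paragraph: one must verify that the copy, discard, and permutation operations, applied to the diagram as a whole, correspond to applying the index operations $k\mapsto$ (duplicate $/$ delete $/$ reorder) inside \emph{every} group-theoretic summand at once. This uniformity is exactly guaranteed by the fact that the three tensor operations commute with the diagonal $G$-action (Definition \ref{def: groupaction models}), so the orbit-sum structure is preserved at each stage. I do not expect any genuine obstacle beyond this check; conceptually, the point is that a single molecule $S$ encodes all of the combinatorial data of $G$, which is precisely what allows the strict inclusion $\mathscr{P}^{S_d}_\bullet\subsetneq\mathscr{P}^G_\bullet$ — where $\mathscr{P}^{S_d}_\bullet$ is already generated by $\GHZ$ and $R$ after \cite{Jon94} — to be enlarged to all of $\mathscr{P}^G_\bullet$ by adjoining $S$.
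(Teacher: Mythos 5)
Your proposal is correct and follows essentially the same route as the paper: the paper likewise spans $\mathscr{P}^G_n$ by orbit sums, uses $R$ to sort indices, the $\GHZ$ tensor to handle repeated indices, and then realizes the resulting distinct-index orbit sum by capping the unused legs of $S$ with the uniform covector of \eqref{equ: single cap}. The only difference is organizational — you build $[\vec{i}]$ directly from $S$ by discard/copy/permute gadgets, whereas the paper runs the same three steps as successive reductions on $[\vec{i}]$ — and both hinge on the identical observation that these operations act summand-by-summand on $S=\sum_{g\in G}(g\cdot 1,\dots,g\cdot d)$.
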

\begin{proof}
	Let $\mathscr{Q}_\bullet$ be the submodel generated by $\GHZ$ tensor, the transposition $R$ and the molecule $S$. Suppose $\vec{i}=(i_1,i_2,\cdots,i_n)$ is an arbitrary rank-$n$ tensor in $\mathscr{P}_\bullet$. We need to show that $[\vec{i}]\in\mathscr{Q}_n$.
	
	Note that there exists a element $g\in S_n$ such that $g( i_1)\leq g( i_2)\leq\cdots g( i_n)$. It follows directly that 
	\begin{equation}
	\begin{tikzpicture}
	\draw (0,0) rectangle (1,.6);
	\node at (.1,.3) [left] {$\$$};
	\node at (.5,.3) {$[g(\vec{i})]$};
	\draw [ultra thick] (.5,0)--(.5,-1.6);
	\draw [fill=white] (0,-.5) rectangle (1,-1.1);
	\node at (.45,-.25) [right] {$n$};
	\node at (.45,-1.35) [right] {$n$};
	\node at (.5,-.8) {$g^{-1}$};
	\node at (1.5,-.5) {$=$};
	\draw (2,-.5) rectangle (3,.1);
	\node at (2.1,-.2) [left] {$\$$};
	\node at (2.5,-.2) {$[\vec{i}]$};
	\draw [ultra thick] (2.5,-.5)--(2.5,-1);
	\node at (2.45,-.75) [right]{$n$};
	\end{tikzpicture}
	\end{equation}
	Therefore, if $[g(\vec{i})]\in\mathscr{Q}_n$ then $[\vec{i}]\in\mathscr{Q}_n$. This means that we only need to consider the case in which $i_1\leq i_2\leq \cdots\leq i_n$.
	
	Now suppose there exists $j\in\mathbb{N}$ such that $i_j=i_{j+1}$. Let $\vec{i'}$ be the rank-$(n-1)$ tensor $(i_1,i_2,\cdots, i_{j-1},i_j,i_{j+2},\cdots,i_n)$. By definition, we have that
	\begin{equation}
	\begin{tikzpicture}
	\draw (0,0) rectangle (1,.6);
	\node at (.1,.3) [left] {$\$$};
	\node at (.5,.3) {$[\vec{i'}]$};
	\draw (.5,0)--(.5,-.25)--(.6,-.5);
	\draw (.5,-.25)--(.4,-.5);
	\draw [ultra thick] (.2,0)--(.2,-.5);
	\draw [ultra thick] (.8,0)--(.8,-.5);
	\node at (.2,-.25) [left] {\tiny{j-1}};
	\node at (.8,-.25) [right] {\tiny{n-j-1}};
	\node at (2,0) {$=$};
	\draw (2.5,0) rectangle (3.5,.6);
	\node at (2.6,.3)[left] {$\$$};
	\node at (3,.3) {$[\vec{i}]$};
	\draw [ultra thick] (3,0)--(3,-.5);
	\node at (2.95,-.25)[right] {$n$};
	\end{tikzpicture}
	\end{equation}
	Therefore, if $[\vec{i'}]\in\mathscr{Q}_{n-1}$ then $[\vec{i}]\in\mathscr{Q}_n$. This implies that we only need to consider the case in which $i_j's$ must be distinct, namely, $i_1<i_2<\cdots<i_n$. Since $\dim V=d$, this automatically implies that $n<d$. 
	
	Let $T_{\vec{i}}\in\Hom(d,n)$ be defined as $f_1\otimes f_2\otimes \cdots\otimes f_d$ where $f_k=\raisebox{-.2cm}{\begin{tikzpicture}
	\draw [thick] (0,0)--(0,.5);
	\path[fill=white] (-.25,.25)--(.25,.25);
	\end{tikzpicture}}$, if $k\not\in\{i_1,i_2,\cdots,i_n\}$ and $f_k=\raisebox{-.2cm}{\begin{tikzpicture}
	\draw [thick, fill=black] (0,.5)--(0,0) circle [radius=.05];
	\path[fill=white] (-.25,.25)--(.25,.25);
	\end{tikzpicture}}$, if $k\in\{i_1,i_2,\cdots,i_n\}$. We define $S_{\vec{i}}\in\mathscr{Q}_{n}$ as follows,
    \begin{equation}
    \begin{tikzpicture}
    \draw (0,0) rectangle (1,.6);
    \node at (.1,.3) [left] {$\$$};
    \node at (.5,.3) {$S$};
    \draw [ultra thick] (.5,.0)--(.5,-1.6);
    \draw [fill=white] (0,-.5) rectangle (1,-1.1);
    \node at (.1,-.8) [left] {$\$$};
    \node at (.5,-.8) {$T_{\vec{i}}$};    
    \node at (.45,-.25) [right] {$d$};
    \node at (.45,-1.35)[right] {$n$};
    \end{tikzpicture}
    \end{equation}
    By definition, we have that
    \begin{equation}
    S_{\vec{i}}=\sum_{g\in G}(g\cdot i_1,g\cdot i_2,\cdots, g\cdot i_n)=[\vec{i}].
    \end{equation}
    Therefore, we have $[\vec{i}]\in\mathscr{Q}_\bullet$ for arbitrary rank-n tensor $\vec{i}$. This implies that the group-action model $\mathscr{P}_\bullet^G$ is generated by $\GHZ$ tensor, the transposition $R$ and the molecule $S$.
 \end{proof}
By Theorem \ref{thm: generator}, we determine the generators for the group-action model $\mathscr{P}^G_\bullet$. Clearly, there are more relations need to determine the structure of the group-action model $\mathscr{P}_\bullet^G$. Before introducing these relations, let us introduce a few notations as follows.
\begin{notation}
	\begin{itemize}
		Suppose $k$ is a positive integer. 
			\item 
	\begin{equation}
	\begin{tikzpicture}
	\draw [ultra thick] (0,.75)--(0,0)--(-.75,-.75)--(0,0)--(.75,-.75);
	\draw [fill=white](0,0) circle [radius=.25];
	\node at (0,0) {$k$};
	\node at (1.5,0) {$=$};
	\draw (4,.75)--(4,0)--(3.25,-.75)--(4,0)--(4.75,-.75);
	\draw (3,.75)--(3,0)--(2.25,-.75)--(3,0)--(3.75,-.75);
	\draw (3.3,.75)--(3.3,0)--(2.55,-.75)--(3.3,0)--(4.05,-.75);
	\draw [thick, dotted] (3.4,.5)--(3.9,.5);
	\draw [dotted, thick] (2.95,-.5)--(3.25,-.5);
	\draw [dotted, thick] (3.95,-.5)--(4.35,-.5);
	\draw[decoration={brace,mirror,raise=5pt},decorate, thick, blue]
	(2.15,-.65) -- node[below=6pt] {$k$} (3.35,-.65);
	\draw[decoration={brace,mirror,raise=5pt},decorate, thick, blue]
	(3.65,-.65) -- node[below=6pt] {$k$} (4.85,-.65);
	\draw[decoration={brace,raise=5pt},decorate, thick, blue]
	(2.9,.65) -- node[above=6pt] {$k$} (4.1,.65);
	\end{tikzpicture}
	\end{equation}
	\item
	 \begin{equation}
	\begin{tikzpicture}
	\draw [ultra thick] (0,.5)--(0,-.5);
	\draw [fill=black] (0,-.5) circle [radius=.05];
	\node at (-.05,0) [right] {$k$};
	\node at (.75,0) {$=$};
	\draw [fill=black,thick] (1.2,.5)--(1.2,-.5) circle [radius=.05];
	\draw [fill=black,thick] (1.5,.5)--(1.5,-.5) circle [radius=.05];
	\draw [fill=black,thick] (2.2,.5)--(2.2,-.5) circle [radius=.05];
	\draw [dotted, thick](1.6,0)--(2.1,0);
	\draw[decoration={brace,raise=5pt},decorate, thick, blue]
	(1.1,.4) -- node[above=6pt] {$k$} (2.3,.4);
	\end{tikzpicture}
	\end{equation}
\end{itemize}
\end{notation}
\begin{theorem}\label{thm: relation for S}
	Suppose $\mathscr{P}_\bullet$ is a spin model associated with a vector space $V$ of dimension $d$ and $G$ is a subgroup of the symmetric group $S_d$. Let $\mathscr{P}_\bullet^G$ be the group-action model. Let $S$ be the molecule, i.e., $S=[1,2,\cdots,n]$. Then we have the following relations hold,
	\begin{itemize}
		\item \textbf{Capped scalar:}
		\begin{equation}\label{equ: all balck caps}
		\begin{tikzpicture}
		\draw (0,0) rectangle (1,.6);
		\draw [fill=black, ultra thick] (.5,0)--(.5,-.5);
		\node at (.45,-.25) [right] {$d$};
		\node at (.1,.3) [left] {$\$$};
		\node at (.5,.3) {$S$};
		\draw [fill=black] (.5,-.5) circle [radius=.05];
		\node at (1.2,0)[right] {$=\vert G\vert$};
		\end{tikzpicture}
		\end{equation}
		\item \textbf{Y-uncappable:} For every $\sigma\in S_d$, we have
		\begin{equation}\label{equ: Y uncappable}
		\begin{tikzpicture}
		\draw (0,0) rectangle (1,.6);
		\node at (.1,.3) [left] {$\$$};
		\node at (.5,.3) {$S$};
		\draw [ultra thick] (.5,0)--(.5,-.5);
		\node at (.45,-.25) [right] {$d$};
		\draw (0,-.5) rectangle (1,-1.1);
		\node at (.5,-.8) {$\sigma$};
		\draw [thick](.1,-1.1)--(.2,-1.35)--(.3,-1.1)--(.2,-1.35)--(.2,-1.6);
		\draw [ultra thick] (.6,-1.1)--(.6,-1.6);
		\node at (.55,-1.35) [right] {$d-2$};
		\node at (1.8,-.5) {$=0$};
		\end{tikzpicture}
		\end{equation}	
		\item  \textbf{Group symmetrizing:} Let $p=\sum_{g\in G} \pi(g)\in\Hom(d,d)$, we have that
		\begin{equation}\label{equ: reduction}
		\begin{tikzpicture}
		\draw (0,0) rectangle (1,.6);
		\draw (1.5,0) rectangle (2.5,.6);
		\node at (.1,.3) [left] {$\$$};
		\node at (1.6,.3) [left] {$\$$};
		\node at (.5,.3) {$S$};
		\node at (2,.3) {$S$};
		\draw [ultra thick] (.5,0)--(.5,-.5);
		\draw [ultra thick] (2,0)--(2,-.5);
		\node at (.6,-.25)[left] {$d$};
		\node at (2.1,-.25) [left] {$d$};
		\node at (2.5,0)[right] {$=\displaystyle\sum_{g\in G}$};
		\draw (4,0) rectangle (5,.6);
		\draw [ultra thick](4.5,0)--(4.5,-.5)--(4,-1) arc [radius=.05, start angle=135, end angle=180]--(4+.0707-.1,-2);
		\draw [ultra thick](4.5,-.5)--(5,-1) arc[radius=.05, start angle=45, end angle=0]--(5+.1-.0707,-2);
		\draw  [fill=white] (4.5,-.5) circle [radius=.2];
		\draw [fill=white] (4.5,-1.2) rectangle (5.5,-1.7);
		\node at (4.1,.3) [left] {$\$$};
		\node at (4.5,.3) {$S$};
		\node at (5,-1.5) {$g$};
		\node at (4.5,-.5) {$d$};
		\node at (6,0) {$=$};
		\draw (6.5,0) rectangle (7.5,.6);
		\node at (6.6,.3)[left] {$\$$};
		\node at (7,.3) {$S$};
		\draw [ultra thick](2.5+4.5,0)--(2.5+4.5,-.5)--(2.5+4,-1) arc [radius=.05, start angle=135, end angle=180]--(2.5+4+.0707-.1,-2);
		\draw [ultra thick](2.5+4.5,-.5)--(2.5+5,-1) arc[radius=.05, start angle=45, end angle=0]--(2.5+5+.1-.0707,-2);
		\draw [fill=white] (7,-.5) circle [radius=.2];
		\draw [fill=white] (7,-1.2) rectangle (8,-1.7);
		\node at (7,-.5) {$d$};
		\node at (7.5,-1.5) {$p$};
		\end{tikzpicture}
		\end{equation}
	\end{itemize}
\end{theorem}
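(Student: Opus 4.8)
The plan is to verify all three relations by direct computation from the definition $S=[1,2,\cdots,d]=\sum_{g\in G}(g(1),g(2),\cdots,g(d))$, using only the elementary action of the $\GHZ$ tensor on basis vectors: as a counit it sends every basis vector $e_i$ to $1$ (this is what the black cap \eqref{equ: single cap} evaluates to, by the definition of $\GHZ$ and Corollary \ref{cor: GHZ}), as a multiplication it sends $e_a\otimes e_b$ to $\delta_{a,b}e_a$, and as a comultiplication it sends $e_i$ to $e_i\otimes e_i$. The structural fact I will exploit throughout is that every basic tensor occurring in $S$ is a permutation $g\cdot\vec{1}$ of the tuple $\vec{1}=(1,2,\cdots,d)$, hence has pairwise distinct entries and uses each of $1,\ldots,d$ exactly once.

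For the \textbf{Capped scalar} relation \eqref{equ: all balck caps} I would cap all $d$ legs with the counit. Each leg of a basic tensor $g\cdot\vec{1}$ carries some $e_{g(k)}$, and the counit evaluates to $1$ on every basis vector, so each of the $|G|$ basic tensors contributes a factor $\prod_{k=1}^{d}\epsilon(e_{g(k)})=1$; summing over $g\in G$ gives $|G|$. For the \textbf{Y-uncappable} relation \eqref{equ: Y uncappable} I would observe that applying $\pi(\sigma)$ and then merging the first two legs through the $\GHZ$ vertex forces a Kronecker delta between the entries that land in positions $1$ and $2$. For a term $g\cdot\vec{1}$, using the position-permutation convention of Proposition \ref{pro: symmetric group action}, these entries are $g(\sigma^{-1}(1))$ and $g(\sigma^{-1}(2))$, which are distinct because $\sigma^{-1}(1)\neq\sigma^{-1}(2)$ and $g$ is a bijection. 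Hence every term is annihilated and the whole diagram is $0$, for each $\sigma\in S_d$.

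The \textbf{Group-symmetrizing} relation \eqref{equ: reduction} is the one that needs genuine bookkeeping, and I expect it to be the main obstacle. The rightmost equality is immediate from the definition $p=\sum_{g\in G}\pi(g)$ together with linearity, so the content lies in the first equality. Here I would compute the comultiplication node (the circle labeled $d$) applied to $S$, obtaining $\Delta^{\otimes d}(S)=\sum_{g\in G}(g\cdot\vec{1})\otimes(g\cdot\vec{1})$, then apply $\mathrm{id}\otimes\pi(h)$ to one of the two bundles, using the identity $\pi(h)(g\cdot\vec{1})=(gh^{-1})\cdot\vec{1}$, and finally sum over $h\in G$. The crux is the re-indexing: for fixed $g$ the map $h\mapsto gh^{-1}$ is a bijection of $G$, so the inner sum collapses to $(g\cdot\vec{1})\otimes S$, and summing over $g$ reassembles $\bigl(\sum_{g\in G}g\cdot\vec{1}\bigr)\otimes S=S\otimes S$, which is exactly the left-hand side of \eqref{equ: reduction}.

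In short, the first two relations follow essentially at once from the distinctness of the entries of each basic tensor and from $\epsilon(e_i)=1$, while the third requires only care in matching the position-permutation convention of $\pi$ with the group-orbit indexing of $S$, so that the two bijections of $G$ (the one from comultiplication-plus-$\pi(h)$ and the one from the outer sum over $g$) line up correctly. No deeper input than the $\GHZ$ relations of Proposition \ref{pro: relation for GHZ} and Corollary \ref{cor: GHZ} and the definition of $S$ is needed.
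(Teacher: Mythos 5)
Your proposal is correct and follows essentially the same route as the paper: all three relations are verified by direct computation from $S=\sum_{g\in G}(g\cdot 1,\ldots,g\cdot d)$, with the first two following from the distinctness of the entries of each orbit element and the counit evaluating every basis vector to $1$, and the group-symmetrizing relation following from a reindexing bijection of $G$ (the paper runs this last computation from $S\otimes S$ toward the comultiplication side rather than the reverse, but the substance is identical). No gaps.
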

\begin{proof}
	By definition, we have that 
	\begin{equation}
	S=\sum_{g\in G}(g\cdot 1,g\cdot 2,\cdots, g\cdot D).
	\end{equation}
	Since $g$ is a permutation on the set $\{1,2,\cdots, d\}$, $g\cdot i\neq g\cdot j$ if $i\neq j$. This implies that $S$ is a linear combination of rank-$D$ tensors whose components are distinct. Therefore, Equation \eqref{equ: Y uncappable} follows directly from the definition of $\GHZ$ tensor. Similarly, Equation \eqref{equ: all balck caps} follows from a direct computation.
	
	For Equation \eqref{equ: reduction}, by definition the left-hand side is the tensor product of two $S$'s. Therefore, we have that
	\begin{align*}
	S\otimes S&=\sum_{g,h\in G} (g\cdot 1,g\cdot 2,\cdots, g\cdot d)\otimes (h\cdot 1,h\cdot 2,\cdots, h\cdot d)\\
	&=\sum_{g,h\in G} (g\cdot 1,g\cdot 2,\cdots, g\cdot d, hg^{-1}g\cdot 1,hg^{-1}g\cdot 2,\cdots, hg^{-1}g\cdot d)\\
	&=\sum_{h'\in G}\sum_{g\in G}(g\cdot 1,g\cdot 2,\cdots, g\cdot d, h'g\cdot 1,h'g\cdot 2,\cdots, h'g\cdot d)
	\end{align*}
	This ends up with the same as the right-hand side of Equation \eqref{equ: reduction}. Therefore, Equation \eqref{equ: reduction} holds. 
\end{proof}
The relations in Theorem \ref{thm: relation for S} combined with Equation \eqref{equ: circle parameter}, \eqref{equ: HI relation}, \eqref{equ: normalized}, \eqref{equ: type I R move}, \eqref{equ: type II R move}, \eqref{equ: type III R move}, \eqref{equ: flatness} are sufficient to provide an evaluation algorithm for $\mathscr{P}^G_\bullet$ which will be proved in Theorem \ref{thm: skein theory}. Before that, we discuss the special case when $G=S_d$, since $\mathscr{P}^{S_d}_\bullet$ to group-action models is what Temperley-Lieb-Jones planar algebras to planar algebras. 
\begin{definition}\label{def: planar}
	Suppose $T$ is a diagram in $\Hom(n,m)$. Then $T$ is said to be \underline{planar} if $T$ is generated by $\GHZ$ tensor.
\end{definition}
\begin{remark}
	If a diagram $T\in\Hom(n,m)$ is generated by $\GHZ$ tensor, then $T$ is literally a planar graph with $n+m$ open edges. This justifies the word \underline{planar} in Definition \ref{def: planar}. Equation \eqref{equ: normalized} says that the graph must be simply laced. We say $T$ is a standard form if $T$ is a simply-laced planar graph. It follows directly that every element in $\Hom(n,m)$ generated by $\GHZ$ tensor is a linear combination of standard forms. 
\end{remark}
\begin{lemma}[Jones, \cite{Jon94}]\label{lem: planarify}
	Let $\mathscr{P}^{S_d}_\bullet$ be the group-action model with respect to the symmetric group $S_d$ and $T\in \mathscr{P}_{n,n}\subset\Hom(n,m)$ for some $n\in\mathbb{N}$. Then there exists two permutations $\alpha\in S_n,\beta\in S_m$ such that $\pi(\beta)\circ T\circ\pi(\alpha)$ is planar. 
\end{lemma}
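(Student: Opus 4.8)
The plan is to exploit that, by the result of Jones \cite{Jon94} quoted just before Theorem \ref{thm: generator}, $\mathscr{P}^{S_d}_\bullet$ is generated by $\GHZ$ and the transposition $R$, so I may assume $T$ is presented as a single tensor network diagram built from these two tensors. The two structural inputs I would use are: first, the Reidemeister moves and the flatness relation \eqref{equ: type I R move}--\eqref{equ: flatness} of Proposition \ref{pro: symmetric braiding}, which let me slide any $\GHZ$ vertex freely past a crossing; and second, the Frobenius relations \eqref{equ: HI relation}, \eqref{equ: normalized} of Proposition \ref{pro: relation for GHZ}, which let me collapse any connected configuration of $\GHZ$ vertices to a single multi-legged ``delta''. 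Together these say that, as an element of $\mathscr{P}^{S_d}_\bullet$, $T$ depends only on the \emph{connectivity partition} $\mathcal{B}$ of its $n+m$ boundary points --- the partition in which two boundary points lie in the same block exactly when they are joined through the diagram --- and that $T$ is planar precisely when $\mathcal{B}$, read in the cyclic boundary order (inputs $1,\dots,n$ along the top followed by the outputs), is non-crossing.

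With this reformulation, the lemma reduces to a purely combinatorial claim: for any partition $\mathcal{B}$ of the boundary points there are permutations $\alpha\in S_n$ of the inputs and $\beta\in S_m$ of the outputs after which $\mathcal{B}$ becomes non-crossing. I would prove this by an explicit ``group by block'' construction. Fix an arbitrary linear order $B_1,\dots,B_r$ on the blocks, use $\alpha$ to list the input points block by block in the order $B_1,\dots,B_r$ along the top, and use $\beta$ to list the output points block by block in the \emph{reverse} order $B_r,\dots,B_1$ as one continues around the boundary. The resulting cyclic word of block labels is the palindrome $B_1\cdots B_r B_r\cdots B_1$, so each block $B_k$ is nested inside $B_{k-1}$ and no two blocks interleave; a short case check (including blocks meeting only the inputs or only the outputs) confirms non-crossingness. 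Since composing on the two sides with $\pi(\alpha)$ and $\pi(\beta)$ is exactly the relabeling of the boundary induced by $\alpha,\beta$ (Proposition \ref{pro: symmetric group action}), the diagram $\pi(\beta)\circ T\circ\pi(\alpha)$ has connectivity partition the reordered, now non-crossing, $\mathcal{B}$.

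Finally I would realize this non-crossing partition by a concrete planar diagram: draw each block as a planar tree of $\GHZ$ tensors joining its legs, and nest these trees according to the palindrome structure; by the Frobenius relations this diagram has exactly the prescribed connectivity, hence equals $\pi(\beta)\circ T\circ\pi(\alpha)$, and it is generated by $\GHZ$ alone, i.e.\ planar in the sense of Definition \ref{def: planar}. The main obstacle is not the combinatorics, which is elementary, but the structural reduction in the first step: one must argue carefully that the crossings produced by the $R$'s carry no information beyond the connectivity partition, so that absorbing them into the boundary permutations $\alpha$ and $\beta$ (via Reidemeister II cancellations of the form $R\circ R=\mathrm{id}$) genuinely removes every crossing from the interior rather than merely relocating it. Keeping track of the cyclic boundary convention, so that ``non-crossing partition'' matches ``crossing-free diagram'', is the delicate bookkeeping that makes the argument precise.
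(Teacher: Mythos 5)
Your argument is correct for the statement as it is actually meant and used; note that the paper itself gives no proof of this lemma at all --- it is quoted from Jones's earlier work \cite{Jon94} --- so there is nothing to compare against except the way the lemma is invoked in the proof of Theorem \ref{thm: skein theory}, and your reading matches that usage. Two remarks. First, the statement as literally written (an arbitrary element $T$ of the invariant space) would be false, since a generic linear combination of partition tensors cannot be made simultaneously non-crossing by one pair of permutations; the lemma is really about a single diagram built from $\GHZ$ and $R$, which is how the paper applies it (to the single diagram $Y'$) and how you correctly restrict it at the outset. Your reduction of such a diagram to its connectivity partition is sound --- indeed one can shortcut the diagrammatic moves entirely by observing that $R$ is the literal flip of tensor factors, so the whole network evaluates to $d^{c}$ times the partition tensor of the boundary connectivity, where $c$ is the number of closed components, and the scalar $d^{c}$ is harmlessly realized by closed loops in the planar replacement. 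Second, the only genuinely delicate point is the one you flag: the boundary traversal convention. With the box convention of Definition \ref{def: tensor network diagrams} ($n$ legs on top, $m$ on the bottom), traversing the boundary reads the top left-to-right and the bottom right-to-left, so ``reverse block order as one continues around the boundary'' means the same block order $B_1,\dots,B_r$ left-to-right on both edges of the box; with that understanding your cyclic word is the palindrome $B_1\cdots B_rB_r\cdots B_1$, each block occupies two nested contiguous runs, and the partition is non-crossing, hence realizable by nested $\GHZ$ trees. With that convention pinned down the proof is complete.
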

\begin{theorem}\label{thm: skein theory}
	Suppose $V$ is a vector space of dimension $d$ and $G$ is a subgroup of the symmetric group $S_d$. Let $\mathscr{P}^G_\bullet$ be the group-action model. Then $\mathscr{P}^G_\bullet$ is generated by $\GHZ$ tensor, the transposition $R$ and the molecule $S$. They satisfies Equation \eqref{equ: circle parameter}, \eqref{equ: HI relation}, \eqref{equ: normalized}, \eqref{equ: type I R move}, \eqref{equ: type II R move}, \eqref{equ: type III R move}, \eqref{equ: flatness}, \eqref{equ: Y uncappable}, \eqref{equ: all balck caps} and \eqref{equ: reduction}. Furthermore, there is an evaluation algorithm for the group-action model $\mathscr{P}_\bullet^G$ designed with respect to above generators and relations. 
\end{theorem}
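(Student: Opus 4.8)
The generation statement and the list of relations are already in hand: Theorem~\ref{thm: generator} shows that $\GHZ$, the transposition $R$ and the molecule $S$ generate $\mathscr{P}^G_\bullet$, while the relations are verified in Proposition~\ref{pro: symmetric braiding}, Proposition~\ref{pro: relation for GHZ}, Corollary~\ref{cor: GHZ}, and Theorem~\ref{thm: relation for S}, together with the circle parameter \eqref{equ: circle parameter}. The only genuinely new content is the evaluation algorithm, i.e. turning these relations into a terminating procedure that rewrites an arbitrary closed diagram into a scalar and, more generally, any diagram in $\Hom(n,m)$ into a linear combination of standard forms. The plan is to peel off the three kinds of generators in the order $R$ first, then pairs of $S$, then the residual $\GHZ$-network.

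First I would eliminate all occurrences of $R$. By Proposition~\ref{pro: symmetric group action} the $R$-tensors realize the symmetric-group action, and the Reidemeister moves \eqref{equ: type I R move}, \eqref{equ: type II R move}, \eqref{equ: type III R move} are precisely the relations making the braid-group action factor through $S_n$; hence any subnetwork built from $R$ alone may be replaced by a single permutation $\pi(g)$. The flatness relation \eqref{equ: flatness} then lets me slide every such $\pi(g)$ past the boxes $\GHZ$ and $S$ toward the boundary. Invoking Lemma~\ref{lem: planarify} (with $S$ treated as an opaque box exactly as a spin is in the pure $S_d$ case), this rewrites the diagram as $\pi(\beta)\circ T'\circ\pi(\alpha)$ where $T'$ is planar, i.e. generated by $\GHZ$ and $S$ only; for a closed diagram the boundary permutations are vacuous and $T'$ is a closed planar diagram.

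Next I would reduce the number of molecules in $T'$. Using the Frobenius relations \eqref{equ: HI relation} and \eqref{equ: normalized} I put the $\GHZ$-network into its simply-laced planar normal form and isotope a chosen pair of molecules into the side-by-side configuration on the left-hand side of the group-symmetrizing relation \eqref{equ: reduction}; applying \eqref{equ: reduction} fuses the two molecules into a single one carrying the operator $p=\sum_{g\in G}\pi(g)$, and since $p$ is itself a sum of permutations it is reabsorbed as $R$-tensors and removed by the previous step. Each application strictly decreases the molecule count, so after finitely many steps at most one molecule survives, the $\GHZ$-only components being handled separately.

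Finally I would evaluate the pieces. A molecule-free planar diagram is a pure $\GHZ$-network; the Frobenius relations together with the circle parameter \eqref{equ: circle parameter} collapse it to a power of $d$, which is the Temperley--Lieb--Jones situation for $S_d$. For a single molecule $S$ whose $d$ legs are attached to a $\GHZ$-network, the $Y$-uncappable relation \eqref{equ: Y uncappable} annihilates every configuration in which two legs are joined through a nontrivial permutation, leaving only admissible caps; closing such a diagram and using the capped-scalar relation \eqref{equ: all balck caps} produces the factor $|G|$, while the remaining loops evaluate to $d$ via \eqref{equ: circle parameter}. The main obstacle is the molecule-reduction step: one must show that an arbitrary pair of molecules in a planar diagram can always be isotoped, using only \eqref{equ: HI relation}, \eqref{equ: normalized} and \eqref{equ: flatness}, into the exact configuration demanded by \eqref{equ: reduction}, and that the procedure terminates. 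Correctness of the resulting scalar is then automatic, since every relation used is a genuine identity in $\mathscr{P}^G_\bullet$, so any terminating run necessarily returns the true value. Identifying the resulting standard forms with the orbit sums $[\vec{i}]$ of strictly increasing multi-indices from the proof of Theorem~\ref{thm: generator}, and comparing their number with $\dim\mathscr{P}^G_n$, confirms that the algorithm outputs a basis and hence that the listed relations are complete.
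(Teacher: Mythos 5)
Your proposal follows essentially the same route as the paper: both reduce an arbitrary diagram by sliding the molecules into position with \eqref{equ: flatness}, fusing pairs of molecules with \eqref{equ: reduction} until at most one remains, planarifying the residual $\GHZ$--$R$ network via Lemma \ref{lem: planarify}, and evaluating the single-molecule closed diagram with \eqref{equ: Y uncappable} and \eqref{equ: all balck caps} and the molecule-free ones as powers of $d$. The only place you go beyond the paper is the final sentence: the paper never claims (and the theorem does not require) that the standard forms are linearly independent, so the assertion that they form a basis and that the relations are therefore complete is unsupported, but it is harmless for the statement being proved.
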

\begin{proof}
	By Theorem \ref{thm: generator}, we know that $\mathscr{P}_\bullet^G$ is generated by $\GHZ$ tensor, the transposition $R$ and the molecule $S$. All relations are proved in previous propositions and theorems. Therefore, we need to provide an explicit evaluation algorithm for $\mathscr{P}^G_\bullet$. 	
	\begin{enumerate}
		\item Suppose $X$ is a diagram in $\mathscr{P}_0$ without $S$. Then $X$ is essentially a cubic graph and we define the evaluation of $X$ to be the scalar $d^{C_X}$, where $C_X$ is the number of connected components in the graph $X$.  
		\item Suppose $X$ is a diagram in $\mathscr{P}_0$ whose number of the generators $S$ is at least one. By isotopy, we put $X$ in $[0,1]\times[0,1]\subset\mathbb{R}^2$ such that there is no overlapping for the $x$-coordinates of any two generators. (Note that the molecule $S$ is a rectangle and the other two generators are single points. ) In this form, there are either nothing or parallel strings above a generator $S$. By Equation \eqref{equ: flatness}, a generator $S$ can always be moved up vertically to be above the line $y=1$. Therefore, $X$ can be written in the following form:
	\begin{figure}[H]
		\begin{tikzpicture}
		\draw (0,0) rectangle (1,.6);
		\node at (.1,.3) [left] {$\$$};
		\node at (.5,.3) {$S$};
		\draw (1.5,0) rectangle (2.5,.6);
		\node at (1.6,.3) [left] {$\$$};
		\node at (2,.3) {$S$};
		\draw (4,0) rectangle (5,.6);
		\node at (4.1,.3) [left] {$\$$};
		\node at (4.5,.3) {$S$};
		\draw [dotted, thick](2.6,.3)--(3.6,.3);
		\draw [ultra thick] (.5,-.5)--(.5,0);
		\draw [ultra thick] (2,-.5)--(2,0);
		\draw [ultra thick] (4.5,0)--(4.5,-.5);
		\node at (.45,-.25) [right] {$d$};
		\node at (1.95,-.25) [right] {$d$};
		\node at (4.45,-.25) [right] {$d$};
		\draw (0,-.5) rectangle (5,-1.5);
		\node at (2.5,-1) {$Y$};
		\node at (.1,-1)[left] {$\$$};
		\end{tikzpicture},
	\end{figure}
    where $Y$ is a diagram generated by $\GHZ$ tensor and the transposition $R$. By Equation \eqref{equ: reduction}, any two generators $S$ can be reduced to a linear combination of diagrams whose number of $S$ is one fewer. Therefore, by induction we only need to evaluate diagrams whose number of generator $S$ is exactly one, namely, we can assume $X$ is the following form,
    \begin{figure}[H]
    	\begin{tikzpicture}
    	\draw (0,0) rectangle (1,.6);
    	\node at (.1,.3) [left] {$\$$};
    	\node at (.5,.3) {$S$};
    	\draw [ultra thick] (.5,-1.1)--(.5,0);
    	\node at (.45,-.25) [right] {$d$};
    	\draw [fill=white] (0,-.5) rectangle (1,-1.1);	
    	\node at (.1,-.8) [left] {$\$$};
    	\node at (.5,-.8) {$Y'$};
    	\end{tikzpicture},
    \end{figure}
    where $Y'$ is a diagram generated by $\GHZ$ tensor and the transposition $R$. By Lemma \ref{lem: planarify}, we know that there exists a permutation $\sigma\in S_d$ such that $\pi(\sigma) Y'$ is planar. Let $Y''=\pi(\sigma)Y'$ and thus $X$ equals to the following form,
    \begin{figure}[H]
    	\begin{tikzpicture}
    	\draw (0,0) rectangle (1,.6);
    	\node at (.1,.3) [left] {$\$$};
    	\node at (.5,.3) {$S$};
    	\draw [ultra thick] (.5,-1.6)--(.5,0);
    	\node at (.45,-.25) [right] {$d$};
    	\draw [fill=white] (0,-.5) rectangle (1,-1.1);	
    	\node at (.1,-1.9) [left] {$\$$};
    	\node at (.5,-.8) {$\sigma$};
    	\draw (0,-1.6) rectangle (1,-2.2);
    	\node at (.5,-1.9) {$Y''$};
    	\end{tikzpicture}
    \end{figure}
    If $Y''=\raisebox{-.2cm}{\begin{tikzpicture}
    	\draw [white] (-.25,0)--(0,0);
    	\draw [ultra thick, fill=black] (0,.5)--(0,0) circle [radius=.05];
    	\node at (-.05,.25) [right] {$d$};
    	\end{tikzpicture}}$, then $X=\vert G\vert$ by Equation \eqref{equ: all balck caps}; Otherwise $X=0$ by Equation \eqref{equ: Y uncappable}. 
\end{enumerate}
Next, for every $n\in\mathbb{N}$, we give explicitly the standard forms for $\mathscr{P}_n^G$, such that every element in $\mathscr{P}^G_n$ can be written as a linear combination of standard forms. A diagram $Y$ is said to be a standard form if either there exists an diagram $T\in\mathscr{P}_n$ which is planar and a permutation $\sigma\in S_n$ such that $Y=\pi(\sigma)\circ T$ or there exists an diagram $T\in\Hom(d,n)$ which is planar and $\alpha\in S_d, \beta\in S_n$ such that $Y=\pi(\beta)\circ T\circ\pi(\alpha) S$. It follows that there are only finitely many standard forms for $\mathscr{P}_n^G$. Suppose $X$ is an arbitrary diagram in $\mathscr{P}_n^G$. We have two different cases as before
\begin{enumerate}
	\item Suppose the number of $S$ in $X$ is zero. By Lemma \ref{lem: planarify}, we know that $X$ must be one of the standard forms. 
	\item Suppose the number of $S$ in $X$ is at least one. One can rewrite $X$ into the following form as before,
	\begin{figure}[H]
			\begin{tikzpicture}
		\draw (0,0) rectangle (1,.6);
		\node at (.1,.3) [left] {$\$$};
		\node at (.5,.3) {$S$};
		\draw (1.5,0) rectangle (2.5,.6);
		\node at (1.6,.3) [left] {$\$$};
		\node at (2,.3) {$S$};
		\draw (4,0) rectangle (5,.6);
		\node at (4.1,.3) [left] {$\$$};
		\node at (4.5,.3) {$S$};
		\draw [dotted, thick](2.6,.3)--(3.6,.3);
		\draw [ultra thick] (.5,-.5)--(.5,0);
		\draw [ultra thick] (2,-.5)--(2,0);
		\draw [ultra thick] (4.5,0)--(4.5,-.5);
		\node at (.45,-.25) [right] {$d$};
		\node at (1.95,-.25) [right] {$d$};
		\node at (4.45,-.25) [right] {$d$};
		\draw [ultra thick](2.5,-1.5)--(2.5,-2);
		\node at (2.45,-1.75) [right] {$n$};
		\draw (0,-.5) rectangle (5,-1.5);
		\node at (2.5,-1) {$Y$};
		\node at (.1,-1)[left] {$\$$};
		\end{tikzpicture},
	\end{figure}
where $Y$ is a diagram generated by $\GHZ$ tensor and the transposition $R$. By Equation \eqref{equ: reduction}, any two generators $S$ can be reduced to a linear combination of diagrams whose number of the generator $S$ is one fewer. Therefore, $X$ can be written as a linear combination of diagrams in the following form,
\begin{figure}[H]
	\begin{tikzpicture}
	\draw (0,0) rectangle (1,.6);
	\node at (.1,.3) [left] {$\$$};
	\node at (.5,.3) {$S$};
	\draw [ultra thick] (.5,-1.6)--(.5,0);
	\node at (.45,-.25) [right] {$d$};
	\draw [fill=white] (0,-.5) rectangle (1,-1.1);	
	\node at (.1,-.8) [left] {$\$$};
	\node at (.5,-.8) {$Y'$};
	\node at (.45,-1.35)[right] {$n$};
	\end{tikzpicture},
\end{figure}
where $Y'$ is a diagram generated by $\GHZ$ tensor and the transposition $R$. By Lemma \ref{lem: planarify}, we know that each diagram in the above form is a standard form. 
\end{enumerate}
Therefore, the above evaluation algorithm combined with the generators and relations provides a complete skein theory for the group-action model $\mathscr{P}^G_\bullet$.
\end{proof}
\begin{corollary}\label{cor: group-subgroup}
	Suppose $G$ is a finite group and $H$ is a subgroup of $G$. Then we have the same skein theory for the subfactor planar algebra for the group-subgroup subfactor $R\rtimes H\subset R\rtimes G$. 
\end{corollary}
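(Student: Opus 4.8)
The plan is to deduce the statement directly from Theorem \ref{thm: skein theory} by passing through the dictionary, recalled after Proposition \ref{pro: GHZ and R for group action} and established in \cite{Jon12}, between transitive group actions and group-subgroup subfactors. First I would realize the subfactor combinatorially: given $H\le G$, let $X=G/H$ be the set of left cosets, of cardinality $d=[G:H]$, and let $G$ act on $X$ by left multiplication. This action is transitive and the stabilizer of the coset $H$ is $H$ itself, so it is exactly the situation in which, by \cite{Jon12}, the even part of the group-subgroup subfactor planar algebra for $R\rtimes H\subset R\rtimes G$ is isomorphic as a planar algebra to the group-action model $\mathscr{P}_\bullet^G$ attached to $V=\mathbb{C}[X]\cong\mathbb{C}^d$ (Definition \ref{def: groupaction models}). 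Under this isomorphism an $n$-box corresponds to a $G$-invariant rank-$2n$ tensor, the circle value is $d=[G:H]$ as in \eqref{equ: circle parameter}, and the three generators $\GHZ$, $R$, $S$ are identified with genuine elements of the planar algebra.

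Second, I would simply apply Theorem \ref{thm: skein theory} to this $\mathscr{P}_\bullet^G$. That theorem supplies precisely the generators ($\GHZ$, the transposition $R$, and the molecule $S$), the relations \eqref{equ: circle parameter}, \eqref{equ: HI relation}, \eqref{equ: normalized}, \eqref{equ: type I R move}, \eqref{equ: type II R move}, \eqref{equ: type III R move}, \eqref{equ: flatness}, \eqref{equ: Y uncappable}, \eqref{equ: all balck caps}, \eqref{equ: reduction}, and the evaluation algorithm, all of which transport across the planar-algebra isomorphism to the even part of the subfactor planar algebra. The odd part is obtained from the even part by the $1$-click rotation, the Fourier transform on subfactors noted at the end of Section \ref{sec: Prelimenary}, so the skein data for the even part in fact determines the whole subfactor planar algebra. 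This is the entire content of the corollary.

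The one point that needs care, and which I expect to be the main obstacle, is the identification in the first step when the coset action fails to be faithful, i.e.\ when $H$ contains a nontrivial normal subgroup of $G$; then $G$ does not embed into $S_d$ as literally required in Definition \ref{def: groupaction models}. I would resolve this by observing that the proofs underlying Theorem \ref{thm: skein theory} --- the generation argument of Theorem \ref{thm: generator}, the relations of Theorem \ref{thm: relation for S}, and the evaluation algorithm --- never use the embedding $G\hookrightarrow S_d$ but only the transitive action of the abstract group $G$ on the $d$-element set $X$ together with the sums over $G$ appearing in $S=[1,2,\dots,d]=\sum_{g\in G}(g\cdot 1,\dots,g\cdot d)$, in the scalar $|G|$ of \eqref{equ: all balck caps}, and in the symmetrizer $p=\sum_{g\in G}\pi(g)$ of \eqref{equ: reduction}. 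Since these are exactly the quantities that parametrize the family in Theorem \ref{thm: skein theory}, the skein theory carries over verbatim to the transitive action on $G/H$ for every subgroup $H\le G$, and no new work is required beyond Theorem \ref{thm: skein theory} and the dictionary of \cite{Jon12}.
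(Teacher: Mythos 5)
Your proposal is correct and follows essentially the same route as the paper: realize the even part of the subfactor planar algebra as the group-action model attached to the transitive action of $G$ on the $d=[G:H]$ cosets of $H$, apply Theorem \ref{thm: skein theory}, and then recover the whole planar algebra from its even part (the paper does this last step via an explicit relation for $1$-boxes rather than by naming the $1$-click rotation, but the content is the same). Your extra remark about non-faithful coset actions, where $G$ does not literally embed in $S_d$ as Definition \ref{def: groupaction models} demands, flags a point the paper passes over silently, and your resolution --- that the generators, relations, and evaluation algorithm depend only on the action and on the sums over $G$ appearing in $S$, in the scalar $\vert G\vert$, and in the symmetrizer $p$ --- is the right one.
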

\begin{proof}
	We set $d$ to be the index of $H$ in $G$, $[G:H]$. Consider the set $X=\{Hg_1,Hg_2,\cdots,Hg_d\}$, the set of right cosets of $H$. Then $G$ acts on $X$ transitively by left multiplication and the stabilizer of each point in $X$ is $H$. Therefore, one can construct a group action model $\mathscr{P}^G_\bullet$ based on this group action and in particular $\mathscr{P}_\bullet^G$ is the even part of the subfactor planar algebra for the group-subgroup subfactor $R\rtimes H\subset R\rtimes G$. Therefore the skein theory provided in Theorem \ref{thm: skein theory} completely the even part of the group-subgroup subfactor planar algebra for $H\subset G$, denoted by $\mathscr{P}_{\bullet,\pm}$. In this subfactor planar algebra, we have that for $x\in\mathscr{P}_{1,+}$, 
	\begin{equation}
	\begin{tikzpicture}
	\draw [fill=lightgray] (.5,-.2)--(.5,.8) arc [radius=.4, start angle=180, end angle=0]--(1.3,-.2) arc [radius=.4, start angle=0,end angle=-180];
	\draw [fill=white] (0,0) rectangle (1,.6);
	\node at (.5,.3) {$x$};
	\node at (.1,.3) [left] {$\$$};
	\node at (1.65,.3) {$=d$};
	\path [fill=lightgray] (2,-.8) rectangle (4,1.4); 
	\draw [fill=white] (2.2,-.2)--(2.2,.8) arc [radius=.4, start angle=180, end angle=0]--(3,-.2) arc [radius=.4, start angle=0, end angle=-180];
	\draw [fill=white] (2.6,0) rectangle (3.6,.6);
	\node at (3.1,.3) {$x$};
	\node at (2.7,.3) [left] {$\$$};
	\end{tikzpicture}
	\end{equation}
	Therefore, the skein theory completely describes the whole planar algebra $\mathscr{P}_{\bullet,\pm}$. 
\end{proof}
\section{Example}\label{sec: Example}
In this section, we discuss the example in Question \ref{ques: equiv formuation of ques of VJ}, namely the group-action model associated with the Petersen graph. Let us first recall the definition of the Petersen graph.
\begin{definition}\label{def: Petersen graph}
	The Petersen graph $\Gamma$ is given as follows,
	\begin{figure}[H]
		\begin{tikzpicture}
		\foreach \i in {1,...,5}
		{
			\pgfmathsetmacro{\a}{cos(360*(\i-1)/5+90)};
			\pgfmathsetmacro{\b}{sin(360*(\i-1)/5+90)};
			\pgfmathsetmacro{\r}{cos(360*(\i)/5+90)};
			\pgfmathsetmacro{\s}{sin(360*(\i)/5+90)};
			\pgfmathsetmacro{\p}{cos(360*(\i+1)/5+90)};
			\pgfmathsetmacro{\q}{sin(360*(\i+1)/5+90)};
			\pgfmathsetmacro{\k}{int(2*\i-1)};
			\pgfmathsetmacro{\l}{int(2*\i)};
			\node at (1.2*\a,1.2*\b) {\small{$\k$}};
			\node at (.42*\a+.18*\r,.42*\b+.18*\s) {\small{$\l$}};
			\draw (\a,\b)--(\r,\s);
			\draw (\a,\b)--(.6*\a,.6*\b);
			\draw (.6*\a,.6*\b)--(.6*\p,.6*\q);
			\draw [fill=black] (\a,\b) circle [radius=.05];
			\draw [fill=black] (.6*\a,.6*\b) circle [radius=.05];
		}
		\end{tikzpicture}
	\end{figure}
The Petersen graph can also be constructed as the Kneser graph $KG_{5,2}\colon$
\begin{enumerate}
	\item The vertex set $V$ consists of all 2-subsets of $\{1,2,3,4,5\}$, i.e., $\{v\subset\{1,2,3,4,5\}: \vert v\vert =2\}$.
	\item The edge set $E$ consists of all unordered pairs of two vertices who correspond to disjoint 2-subsets, i.e., for $v_1,v_2\in V$, 
	\begin{equation*}
	(v_1,v_2)\in E\Leftrightarrow v_1\cap v_2=\emptyset.
	\end{equation*}
\end{enumerate}
Therefore, it follows that the automorphism group of the Petersen graph is the symmetric group $S_5$ and the action of $S_5$ on the vertex set $V$ is transitive. Furthermore, the stabilizer group of a single vertex is isomorphic to $S_2\times S_3$. 
\end{definition}
Let $W$ be the vector space freely generated by the vertex set $V$ of the Petersen graph $\Gamma$. Therefore, the symmetric group $S_5$ acts on $W$ by permuting its basis. By Definition \ref{def: groupaction models}, we denote the above group-action model associated with the Petersen graph by $\mathscr{P}_\bullet^{S_5}$. 
\begin{proposition}\label{pro: basis for 2-box}
	The space $\mathscr{P}_{2}^{S_5}$ has a basis $\{1,A_\Gamma,A_{\Gamma^C}\}$, where $1$ is the identity matrix and $A_\Gamma$ (resp. $A_{\Gamma^c}$) is the adjacency matrix for the Petersen graph $\Gamma$ (resp. the complement of $\Gamma$ with respect to the complete graph $K_{10}$).
\end{proposition}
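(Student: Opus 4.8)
The plan is to identify $\mathscr{P}_2^{S_5}$ with a space of $S_5$-invariant matrices and then count orbits. A rank-$2$ tensor in $\mathscr{P}_2 = W\otimes W$ can be written as $\sum_{u,v\in V} c_{u,v}\, e_u\otimes e_v$, which I regard as the $10\times 10$ matrix $C=(c_{u,v})$ indexed by the vertex set $V$ of the Petersen graph. Since $S_5$ acts on $W$ by permuting the basis $\{e_v:v\in V\}$, the induced diagonal action on $W\otimes W$ carries the coefficient at $(u',v')$ to $c_{g^{-1}\cdot u',\, g^{-1}\cdot v'}$, so the tensor lies in $\mathscr{P}_2^{S_5}$ if and only if the function $(u,v)\mapsto c_{u,v}$ is constant on the orbits of $S_5$ acting diagonally on $V\times V$. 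Hence $\dim\mathscr{P}_2^{S_5}$ equals the number of such orbits, and a natural spanning set is given by their indicator matrices.

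First I would classify the orbits of $S_5$ on ordered pairs $(u,v)$ of vertices. Recalling that each vertex is a $2$-subset of $\{1,\dots,5\}$, the quantity $|u\cap v|\in\{0,1,2\}$ is manifestly $S_5$-invariant, so there are at least three orbits. The key step is to verify that $S_5$ acts transitively on the set of pairs with each fixed intersection size: given two pairs $(u,v)$ and $(u',v')$ with $|u\cap v|=|u'\cap v'|$, one matches the shared and unshared elements of $u,v$ to those of $u',v'$ and extends arbitrarily to a permutation of $\{1,\dots,5\}$ (at most four of the five indices are constrained, leaving room). This shows there are exactly three orbits, so $\dim\mathscr{P}_2^{S_5}=3$.

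Finally I would match the three orbits with the three asserted matrices. The orbit $|u\cap v|=2$ is the diagonal $\{(u,u)\}$, whose indicator matrix is the identity $1$. The orbit $|u\cap v|=0$ consists exactly of the disjoint pairs of $2$-subsets, i.e.\ the edges of the Kneser graph $KG_{5,2}=\Gamma$, so its indicator matrix is $A_\Gamma$. The remaining orbit $|u\cap v|=1$ consists of the distinct, non-disjoint pairs, which are precisely the edges of the complement $\Gamma^C$ inside $K_{10}$, giving $A_{\Gamma^C}$. These three matrices have pairwise disjoint supports, hence are linearly independent, and by the orbit-constancy criterion they span $\mathscr{P}_2^{S_5}$; therefore $\{1,A_\Gamma,A_{\Gamma^C}\}$ is a basis.

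The main obstacle is the transitivity verification in the second step. While the intersection size $|u\cap v|$ is an obvious invariant giving the lower bound of three, one must confirm that it is a \emph{complete} invariant, so that no orbit splits further; this is exactly what pins the dimension down to three rather than merely bounding it below. Everything else reduces to the bookkeeping identifying invariant tensors with orbit-constant matrices.
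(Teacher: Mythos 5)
Your proposal is correct and follows essentially the same route as the paper: the paper also identifies $\mathscr{P}_2^{S_5}$ with the span of the orbit sums of basic tensors and lists the three orbit representatives $[(1,2),(1,2)]$, $[(1,2),(3,4)]$, $[(1,2),(1,3)]$, which correspond exactly to your intersection sizes $2$, $0$, $1$. Your explicit verification that the intersection size is a complete invariant (i.e.\ that $S_5$ acts transitively on each class) fills in a step the paper leaves implicit.
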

\begin{proof}
	By definition, the space $\mathscr{P}_2^{S_5}$ has a basis which consists of the orbits of basics tensors under the action of $S_5$, namely,
	\begin{itemize}
		\item $[(1,2),(1,2)]$, 
		\item $[(1,2),(3,4)]$,
		\item $[(1,2),(1,3)]$. 
	\end{itemize}
When considering $\mathscr{P}_2^{S_5}$ as $\Hom(W,W)$, the above three elements are exactly $\{1,A_\Gamma,A_{\Gamma^C}\}$. 
\end{proof}
\begin{notation}
	The basis for $\mathscr{P}_2^{S_5}$ in Proposition \ref{pro: basis for 2-box} is represented as follows,
		\begin{figure}[H]
		\begin{minipage}{.25\textwidth}
			\centering
			\begin{tikzpicture}
			\draw [black] (0,0) arc [radius=.5, start angle=180, end angle=0];
			\end{tikzpicture}
		\end{minipage}
		\begin{minipage}{.25\textwidth}
			\centering
			\begin{tikzpicture}
				\draw [red] (0,0) arc [radius=.5, start angle=180, end angle=0];
				\draw [red] (.5,.5) circle [radius=.05];
			\end{tikzpicture}
		\end{minipage}
	\begin{minipage}{.25\textwidth}
		\centering 
		\begin{tikzpicture}
			\draw [green] (0,0) arc [radius=.5, start angle=180, end angle=0];
			\draw [green, ultra thick] (.5,.4)--(.5,.6);
		\end{tikzpicture}
	\end{minipage}
	\end{figure}
Moreover, we have the following relation,
\begin{equation}\label{equ: 2-box identity decomposition}
\begin{tikzpicture}
\draw [fill=black](0,0)--(0,.3) circle [radius=.05];
\draw [fill=black](1,0)--(1,.3) circle [radius=.05];
\node at (1.5,.25) {$=$};
\draw (2,0) arc [radius=.5, start angle=180, end angle=0];
\node at (3.5,.25) {$+$};
\draw [red] (4,0) arc [radius=.5, start angle=180, end angle=0];
\draw [red] (4.5,.5) circle [radius=.05]; 
\node at (5.5,.25) {$+$};
\draw [green] (6,0) arc [radius=.5, start angle=180, end angle=0];
\draw [green, ultra thick] (6.5,.4)--(6.5,.6);
\end{tikzpicture}
\end{equation}
\end{notation}
The following theorem is known to Jones in his unpublished work \cite{JonPC} and also to Curtin in \cite{Cur03}. In this paper, we give an independent proof for the group-action model associated with the Petersen graph. 
\begin{theorem}\label{thm: 2-box generating}
	Let $\mathscr{P}_\bullet^{S_5}$ be the group-action model associated with the Petersen graph and $\mathscr{Q}_\bullet$ be the submodel generated by $\mathscr{P}_2^{S_5}$ and the $\GHZ$ tensor. Then the following are equivalent
	\begin{enumerate}
		\item \label{itm: Petersen 1} $\mathscr{Q}_\bullet=\mathscr{P}_\bullet^{S_5}$.
		\item \label{itm: Petersen 2} $R\in\mathscr{Q}_{4}$. 
	\end{enumerate}
\end{theorem}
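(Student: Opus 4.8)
The plan is to prove the two implications separately, with $(\ref{itm: Petersen 2})\Rightarrow(\ref{itm: Petersen 1})$ carrying essentially all of the content. The implication $(\ref{itm: Petersen 1})\Rightarrow(\ref{itm: Petersen 2})$ is immediate: by Proposition \ref{pro: GHZ and R for group action} the transposition $R$ lies in $\mathscr{P}_4^{S_5}$, so if $\mathscr{Q}_\bullet=\mathscr{P}_\bullet^{S_5}$ then $R\in\mathscr{Q}_4$ with no work required. For the converse, recall that here $d=10$, and by Theorem \ref{thm: generator} the model $\mathscr{P}_\bullet^{S_5}$ is generated by the three tensors $\GHZ$, $R$ and the molecule $S=[1,2,\dots,10]$. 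The submodel $\mathscr{Q}_\bullet$ contains $\GHZ$ by definition and contains $R$ by hypothesis $(\ref{itm: Petersen 2})$, while $\mathscr{Q}_\bullet\subseteq\mathscr{P}_\bullet^{S_5}$ trivially. Hence the whole statement reduces to the single assertion $S\in\mathscr{Q}_{10}$, which I would establish by an explicit construction of $S$ from $\GHZ$ and the $2$-box $A_\Gamma$.

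The construction realizes $S$ as a tensor network supported on the Petersen graph $\Gamma$ itself. Using the Frobenius relations \eqref{equ: HI relation} and \eqref{equ: normalized}, compositions of $\GHZ$ tensors produce, for every $k\ge 1$, the rank-$k$ ``copy'' spider $\sum_{j}(j,j,\dots,j)$, which therefore lies in $\mathscr{Q}$. I would place at each of the ten vertices $v\in V(\Gamma)$ a rank-$4$ copy spider: since $\Gamma$ is $3$-regular, three of its legs meet the three incident edges and one leg is left as the external boundary leg indexed by $v$. Each of the fifteen edges is decorated by the adjacency matrix $A_\Gamma\in\mathscr{P}_2^{S_5}\subseteq\mathscr{Q}_2$ of Proposition \ref{pro: basis for 2-box}. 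Contracting all internal legs, the copy spider at $v$ forces all of its legs to carry one common index $\phi(v)$, and the product of edge weights is nonzero exactly when $\phi$ sends every edge of $\Gamma$ to an edge; thus the resulting rank-$10$ tensor is
\begin{equation*}
S'=\sum_{\phi\colon V(\Gamma)\to V(\Gamma)}\Big(\prod_{(u,v)\in E(\Gamma)}A_\Gamma\big(\phi(u),\phi(v)\big)\Big)\bigotimes_{v\in V(\Gamma)}e_{\phi(v)}=\sum_{\phi\in\operatorname{End}(\Gamma)}\ \bigotimes_{v\in V(\Gamma)}e_{\phi(v)},
\end{equation*}
and by construction $S'\in\mathscr{Q}_{10}$.

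It remains to identify $S'$ with $S$, and this is where I expect the main obstacle to lie. On the one hand $S=\sum_{g\in S_5}(g\cdot 1,\dots,g\cdot 10)=\sum_{\phi\in\operatorname{Aut}(\Gamma)}\bigotimes_{v}e_{\phi(v)}$, since by Definition \ref{def: Petersen graph} the action of $S_5$ on $V(\Gamma)$ is precisely $\operatorname{Aut}(\Gamma)=S_5$. Thus $S'=S$ is \emph{equivalent} to the statement that every endomorphism of the Petersen graph is an automorphism, i.e.\ that $\Gamma$ is a \textbf{core}. This is the crux of the argument; I would justify it by the standard core argument for vertex-transitive graphs: the core of a vertex-transitive graph is vertex-transitive with vertex count dividing $|V(\Gamma)|=10$, and the proper divisors $1,2,5$ are excluded because $\Gamma$ has edges (ruling out $1$), is non-bipartite of odd girth $5$ (ruling out $K_2$), and has circular chromatic number $3>\tfrac52$ so admits no homomorphism onto $C_5$ (the only triangle-free vertex-transitive graph on $5$ vertices); hence the core has $10$ vertices and $\Gamma$ is its own core. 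Granting this, $\operatorname{End}(\Gamma)=\operatorname{Aut}(\Gamma)$, so $S'=S\in\mathscr{Q}_{10}$. Combining $\GHZ,R,S\in\mathscr{Q}_\bullet$ with Theorem \ref{thm: generator} yields $\mathscr{Q}_\bullet=\mathscr{P}_\bullet^{S_5}$, completing $(\ref{itm: Petersen 2})\Rightarrow(\ref{itm: Petersen 1})$.
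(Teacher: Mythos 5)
Your proposal is correct, and its central construction coincides with the paper's: both reduce the theorem, via Theorem \ref{thm: generator}, to producing the molecule $S$ inside $\mathscr{Q}_{10}$, and both realize it as the tensor network obtained by decorating the Petersen graph with $\GHZ$ spiders at vertices and the $2$-box $A_\Gamma$ on edges, so that the resulting tensor sums over graph endomorphisms of $\Gamma$. The only divergence is in the last step, identifying $\operatorname{End}(\Gamma)$ with $\operatorname{Aut}(\Gamma)$. The paper argues this elementarily and self-containedly: if an endomorphism identified two (necessarily non-adjacent) vertices $j,k$, one takes a path $j\text{--}m\text{--}n\text{--}k$ of length $3$ (which exists by the Kneser description) and obtains a triangle in the image, contradicting triangle-freeness of $\Gamma$; hence every endomorphism is injective, so bijective, so an automorphism. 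You instead invoke the theory of graph cores: the core of a vertex-transitive graph is vertex-transitive of order dividing $10$, and the orders $1,2,5$ are excluded (edges, non-bipartiteness, and no homomorphism onto $C_5$ since $\chi_c(\Gamma)=3>\tfrac52$). This is a standard and valid route, but it imports two nontrivial external facts (the divisibility theorem for cores of vertex-transitive graphs, and the circular chromatic number of the Petersen graph), the second of which is itself roughly as hard as the claim being proved; the paper's three-edge-path argument gets the same conclusion from first principles in a few lines. If you keep your version, you should either cite these facts precisely or replace the order-$5$ exclusion by the direct injectivity argument.
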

\begin{proof}
	It is obvious that \eqref{itm: Petersen 1}$\Rightarrow$\eqref{itm: Petersen 2}. We only need to show the other direction, i.e., \eqref{itm: Petersen 2}$\Rightarrow$\eqref{itm: Petersen 1}.
	
	By Theorem \ref{thm: generator}, we know that $\mathscr{P}_\bullet^{S_5}$ is generated by the transposition $R$ and the molecule $S$. Suppose \eqref{itm: Petersen 2} holds, i.e., $R\in\mathscr{Q}_{4}$. This implies that \raisebox{-.3cm}{\begin{tikzpicture}
		\draw [red] (0,0)--(.8,.8);		
		\draw (0,.8)--(.8,0);
		\draw [red] (.2,.2) circle [radius=.05];
		\end{tikzpicture}}$\in\mathscr{Q}_4$. Now we give an explicit construction of the molecule $S$ based on the Petersen graph $\Gamma$. For each vertex of $\Gamma$, we put a $\GHZ_4$ tensor as in Definition \ref{def: GHZ}. For each edge of $\Gamma$, we put the rank-$2$ tensor $A_\Gamma$ and then apply contraction on those tensors following the pattern of $\Gamma$ as on the left of Fig. \ref{fig: construction of s}. Then we rearrange the diagram to obtain a rank-$10$ tensor, denoted by $\widetilde{S}$, as illustrated on the right of Fig. \ref{fig: construction of s}.
	\begin{figure}[H]
		\centering
		\begin{minipage}{.4\textwidth}
			\begin{tikzpicture}
			\foreach \i in {1,...,5}
			{
				\pgfmathsetmacro{\a}{1.5*cos(360*(\i-1)/5+90)};
				\pgfmathsetmacro{\b}{1.5*sin(360*(\i-1)/5+90)};
				\pgfmathsetmacro{\r}{1.5*cos(360*(\i)/5+90)};
				\pgfmathsetmacro{\s}{1.5*sin(360*(\i)/5+90)};
				\pgfmathsetmacro{\p}{1.5*cos(360*(\i+1)/5+90)};
				\pgfmathsetmacro{\q}{1.5*sin(360*(\i+1)/5+90)};
				\pgfmathsetmacro{\k}{int(2*\i-1)};
				\pgfmathsetmacro{\l}{int(2*\i)};
				\draw (\a,\b)--(\a,\b+.35);
				\draw (.6*\a-.15,.6*\b)--(.6*\a-.15,.6*\b+.35);
				\draw [red] (\a,\b)--(\r,\s);
				\draw [red] (.5*\a+.5*\r,.5*\b+.5*\s) circle [radius=.05];
				\draw [red](.6*\a-.15,.6*\b)--(.6*\p-.15,.6*\q);
				\draw [red] (.3*\a+.3*\p-.15,.3*\b+.3*\q) circle [radius=.05];
				\draw [red](\a,\b)--(.6*\a-.15,.6*\b);
				\draw [red] (.8*\a-.075,.8*\b) circle [radius=.05];
				\draw [fill=black] (\a,\b) circle [radius=.05];
				\draw [fill=black] (.6*\a-.15,.6*\b) circle [radius=.05];
			}
			\end{tikzpicture}
		\end{minipage}
	\begin{minipage}{.4\textwidth}
			\begin{tikzpicture}
		\foreach \i in {1,...,5}
		{
			\ifthenelse{\NOT \i=3}{\pgfmathsetmacro{\a}{(2*\i+Mod(2*\i+4,10))/2}}{\pgfmathsetmacro{\a}{2*\i+2}};
			\pgfmathsetmacro{\b}{(2*\i+Mod(2*\i+4,10))/2};
			\pgfmathsetmacro{\r}{abs(2*\i-\a)};
			\draw [red] (\a/2-\r/2,1.25) arc [radius=\r/2, start angle=180, end angle=0];
			\draw [red] (\a/2,1.25+\r/2) circle [radius=.05]; 
			\draw [red] (\i-.5,0) arc [radius=.25, start angle=180, end angle=0];
			\draw [red] (\i-.25,.25) circle [radius=.05];
			\ifthenelse{\NOT \i=5}{\draw [red] (\i-.5,.25) arc [radius=.5, start angle=180, end angle=0];\draw [red] (\i-.1,.73) circle [radius=.05];}{\draw [red] (\i-.5,.25)--(\i-.5, .5) arc [radius=.5, start angle=0, end angle=90]--(1,1) arc [radius=.5, start angle=90, end angle=180]--(.5,.25);\draw [red] (2.5,1) circle [radius=.05];};
			\draw (\i,0)--(\i,1.25);
			\draw (\i-.5,0)--(\i-.5,.25);
			\draw [fill=black] (\i,0) circle [radius=.05];
			\draw [fill=black] (\i-.5,0) circle [radius=.05];
			\draw [fill=black] (\i,1.25) circle [radius=.05];
			\draw [fill=black] (\i-.5,.25) circle [radius=.05];
			\draw (\i,0)--(\i,-.5);
			\draw (\i-.5,0)--(\i-.5,-.5);
		}
		\end{tikzpicture}
	\end{minipage}
		\caption{The construction of the molecule $S$.}\label{fig: construction of s}
	\end{figure}
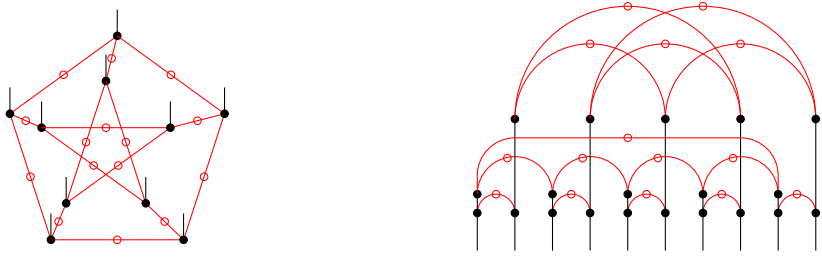	
The rank-$10$ tensor $\tilde{S}$ is obtained by deforming the Petersen graph $\Gamma$. Let $\vec{i}=(i_1,i_2,\cdots,i_{10})$ be an arbitrary rank-$10$ tensor in $\mathscr{P}_{10}$. Then we have
\begin{equation}
\langle \tilde{S},\vec{i}\rangle=\prod_{(j,k)\in E} A_\Gamma(i_j,i_k),
\end{equation}
where $E$ is the set of edges in the Petersen graph $\Gamma$ and $A_\Gamma$ is the adjacency matrix of $\Gamma$. Note that $A_\Gamma$ is a zero-one matrix. The value $\langle \tilde{S},\vec{i}\rangle$ must be either 0 or 1. Let $\vec{i}$ be a rank-$10$ tensor such that $\langle \tilde{S},\vec{i}\rangle=1$. First we show that $(i_1,i_2,\cdots,i_{10})$ is a permutation of $(1,2,\cdots, 10)$. Otherwise, there exists $j\neq k$ such that $i_j=i_k$. Since $\langle \tilde{S},\vec{i}\rangle=1$, we have that $(j,k)\not\in E$. By the definition of the Petersen graph, there exists $1\leq m,n\leq 10$ such that $m\neq n$ and $(j,m),(m,n),(n,k)\in E$. This implies that $(i_j,i_m),(i_m,i_n),(i_n,i_j)\in E$ which leads to a contradiction since the Petersen graph does not contains $K_3$ as a subgraph. Therefore, there exists $g\in S_{10}$ such that $\vec{i}=g(1,2,\cdots,10)$. Moreover, the map $g$ is actually a graph automorphism and thus $g\in Aut(\Gamma)=S_5$. So we have that
\begin{equation}
\tilde{S}=\sum_{g\in S_5} g(1,2,\cdots,10).
\end{equation}
By the definition of $S$, we know that $\tilde{S}=S$ and hence $S\in\mathscr{A}_\bullet$. By Theorem \ref{thm: generator}, we have that $\mathscr{A}_\bullet=\mathscr{P}_{\bullet}^{S_5}$.
\end{proof}
\begin{theorem}\label{thm: condition1}
The group-action model $\mathscr{P}_{\bullet}^{S_5}$ associated with the Petersen graph is generated by $\mathscr{P}_{2}^{S_5}$. 
\end{theorem}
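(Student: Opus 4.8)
The plan is to deduce the theorem from the equivalence established in Theorem~\ref{thm: 2-box generating}. Writing $\mathscr{Q}_\bullet$ for the submodel generated by $\mathscr{P}_2^{S_5}$ and the $\GHZ$ tensor, that theorem reduces the statement $\mathscr{Q}_\bullet=\mathscr{P}_\bullet^{S_5}$ to the single membership $R\in\mathscr{Q}_4$. Since $R\in\mathscr{P}_4^{S_5}$ by Proposition~\ref{pro: GHZ and R for group action} and $\mathscr{Q}_4\subseteq\mathscr{P}_4^{S_5}$ by construction, it suffices to prove the equality of finite-dimensional spaces $\mathscr{Q}_4=\mathscr{P}_4^{S_5}$; equivalently, $\dim\mathscr{Q}_4=\dim\mathscr{P}_4^{S_5}$. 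Thus the whole problem collapses to a linear-algebra statement inside the single box space $\mathscr{P}_4^{S_5}$.

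First I would pin down the target dimension. The space $\mathscr{P}_4^{S_5}$ has the orbit basis $\{[\vec{j}\,]\}$ indexed by orbits of $S_5$ on ordered $4$-tuples of vertices of the Petersen graph, so $\dim\mathscr{P}_4^{S_5}$ is the number of such orbits. Since the vertex set has $10$ elements, Burnside's lemma gives
\begin{equation}
\dim\mathscr{P}_4^{S_5}=\frac{1}{120}\sum_{g\in S_5}\bigl|\mathrm{Fix}(g)\bigr|^4,
\end{equation}
where $\mathrm{Fix}(g)$ is the set of $2$-subsets of $\{1,\dots,5\}$ preserved by $g$. Running over the seven cycle types of $S_5$ (with fixed-point counts $10,4,2,1,1,0,0$ and class sizes $1,10,15,20,20,30,24$) yields $\tfrac{1}{120}(10^4+10\cdot4^4+15\cdot2^4+20+20)=107$. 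This confirms the count of $107$ advertised in the introduction and fixes the number of linearly independent diagrams that must be produced.

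Next I would construct elements of $\mathscr{Q}_4$ explicitly. Every diagram built from the $\GHZ$ tensor and the three $2$-box generators $\{1,A_\Gamma,A_{\Gamma^c}\}$ of Proposition~\ref{pro: basis for 2-box} is, up to the Frobenius relations of Proposition~\ref{pro: relation for GHZ}, a trivalent graph with four external legs whose internal edges are decorated by one of the three $2$-boxes; its value on a $4$-tuple $\vec{i}$ is obtained by summing, over all labellings of the internal $\GHZ$ vertices by vertices of $\Gamma$, the product of the corresponding adjacency entries. Each such evaluation is automatically $S_5$-invariant and hence a nonnegative integer combination of the orbit vectors $[\vec{j}\,]$. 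The construction is organised using the Kneser description $KG_{5,2}$: the orbit of a $4$-tuple is governed by the pattern of coincidences and Kneser-relations among the four $2$-subsets together with the finer data of how the underlying elements of $\{1,\dots,5\}$ are shared across them, and I would design the $107$ diagrams so that each resolves one additional layer of this combinatorial data.

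The main obstacle is precisely this last step: exhibiting $107$ concrete diagrams and certifying that the resulting $107\times107$ integer matrix of orbit-coefficients has full rank. This is an explicit but large computation — the reason a machine-generated list of diagrams is used — and the conceptual work lies in choosing the diagrams, guided by the Petersen/$S_5$ combinatorics, so that their coefficient matrix is essentially triangular and hence manifestly invertible, rather than requiring a brute-force determinant. Once full rank is established we obtain $\dim\mathscr{Q}_4=107=\dim\mathscr{P}_4^{S_5}$, hence $\mathscr{Q}_4=\mathscr{P}_4^{S_5}\ni R$, and Theorem~\ref{thm: 2-box generating} upgrades this to $\mathscr{Q}_\bullet=\mathscr{P}_\bullet^{S_5}$, which is the assertion of the theorem.
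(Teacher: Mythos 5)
Your reduction to the single membership $R\in\mathscr{Q}_4$ via Theorem \ref{thm: 2-box generating} matches the paper, and your Burnside count $\dim\mathscr{P}_4^{S_5}=107$ is correct. But the proposal stops exactly where the real work begins: you propose to prove the stronger statement $\mathscr{Q}_4=\mathscr{P}_4^{S_5}$ by exhibiting $107$ diagrams built from the $\GHZ$ tensor and the $2$-boxes whose $107\times107$ orbit-coefficient matrix has full rank, and you then defer both the choice of those diagrams and the rank certification to an unperformed ``large computation.'' That deferred step is the entire content of the theorem. Nothing in the proposal rules out the a priori possibility that $\mathscr{Q}_4$ is a proper subspace that happens to miss $R$; the claim that the diagrams can be chosen so that the coefficient matrix is ``essentially triangular'' is precisely what would need to be demonstrated, and no candidate diagrams are given. (Note also that the Appendix's basis for the $4$-box space is written in terms of the generator $R$ itself, so it cannot be borrowed to fill this gap.)

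The paper's actual proof avoids spanning the $107$-dimensional space altogether: it targets the one element $R$ directly. It splits $R$ into three summands using the resolution \eqref{equ: 2-box identity decomposition} of the all-ones $2$-box into $1+A_\Gamma+A_{\Gamma^c}$. The first summand is generated by the $\GHZ$ tensor; the second is extracted from a single explicit element $B_1\in\mathscr{Q}_4$ (a $4$-cycle of $A_\Gamma$-edges joined by $\GHZ$ vertices) whose evaluation collapses because the Petersen graph contains no triangles and no squares; and the third is shown to equal, on the nose, one larger explicit diagram $B_2\in\mathscr{Q}_4$ by a pointwise computation in the Kneser model $KG_{5,2}$. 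If you wish to salvage your spanning approach you must actually produce the $107$ diagrams and certify independence; otherwise the paper's two targeted diagrams are the far shorter path to $R\in\mathscr{Q}_4$, after which Theorem \ref{thm: 2-box generating} finishes the argument exactly as you indicate.
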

\begin{proof}
	Let $\mathscr{Q}_\bullet$ be submodel generated by $\mathscr{P}_{2}^{S_5}$. By Theorem \ref{thm: 2-box generating}, we only need to show that the transposition $R$ belongs to $\mathscr{Q}_4$. By Equation \eqref{equ: 2-box identity decomposition}, we have that
	\begin{equation}\label{equ: decomposition of R}
	\begin{tikzpicture}
	\draw  (0,0)--(1,1);
	\draw  (0,1)--(1,0);
	\node at (1.5,.5) {$=$};
	\draw (2,0)--(3,1);
	\draw (2,1)--(3,0);
	\draw (2.75,.75)--(2.25,.75);
	\draw [fill=black](2.75,.75) circle [radius=.05];
	\draw [fill=black](2.25,.75) circle [radius=.05];
	\node at (3.5,.5) {$+$};
	\draw (2+2,0)--(2+3,1);
	\draw (2+2,1)--(2+3,0);
	\draw [red] (2+2.75,.75)--(2+2.25,.75);
	\draw [fill=black](2+2.75,.75) circle [radius=.05];
	\draw [fill=black](2+2.25,.75) circle [radius=.05];
	\draw [red] (4.5,.75) circle [radius=.05];
	\node at (5.5,.5) {$+$};
	\draw (4+2,0)--(4+3,1);
	\draw (4+2,1)--(4+3,0);
	\draw [green] (4+2.75,.75)--(4+2.25,.75);
	\draw [green, ultra thick] (6.5,.65)--(6.5,.85);
	\draw [fill=black](4+2.75,.75) circle [radius=.05];
	\draw [fill=black](4+2.25,.75) circle [radius=.05];
	\end{tikzpicture}.
	\end{equation}
	To show $R\in\mathscr{Q}_{4}$, we only need to show that each term of the right-hand side of Equation \eqref{equ: decomposition of R} belongs to $\mathscr{Q}_4$. Note that the first term in Equation \eqref{equ: decomposition of R} is obtained by the transposition $R$ and $\GHZ$ tensor and thus it belongs to $\mathscr{Q}_4$.  Next, we show that the second term belongs to $\mathscr{Q}_{4}$. Consider the rank-$4$ tensor $B_1$ \raisebox  {-.4cm}{\begin{tikzpicture}
				\draw (0,0)--(1,1);
			\draw (1,0)--(0,1);
			\draw [red, fill=white] (.25,.25) rectangle (.75,.75);
			\foreach \i in {1,...,4}
			{
				\pgfmathsetmacro{\a}{cos(360*(\i-1)/4+45)};
				\pgfmathsetmacro{\b}{sin(360*(\i-1)/4+45)};
				\pgfmathsetmacro{\c}{cos(360*(\i-1)/4+90)};
				\pgfmathsetmacro{\d}{sin(360*(\i-1)/4+90)};
				\draw [fill=black] (.5+.25*1.414*\a,.5+.25*1.414*\b) circle[radius=.05];
			}
			\draw [red] (.25,.5) circle [radius=.05];
			\draw [red] (.75,.5) circle [radius=.05];
			\draw [red] (.5,.25) circle [radius=.05];
			\draw [red] (.5,.75) circle [radius=.05];
		\end{tikzpicture}}$\in\mathscr{Q}_{4}$. Let $\vec{i}=(i_1,i_2,i_3,i_4)$ be an arbitrary tensor in $\mathscr{P}_4$. By definition, we have
	\begin{equation}
	\langle B_1,\vec{i}\rangle= \prod_{i=1}^4 A_\gamma(i_{j},i_{j+1}),
	\end{equation}
	where the indices are modulo over $4$ and the above inner product must be either $1$ or $0$. Suppose $\langle B_A,\vec{i}\rangle=1$. Then $i_j$'s cannot be four distinct numbers otherwise it implies the existence of a square in the Petersen graph. Therefore, we have that 	
	\begin{equation}\label{equ: second term of decomposition of R}
	\begin{tikzpicture}
	\draw (0,0)--(1,1);
	\draw (1,0)--(0,1);
	\draw [red, fill=white] (.25,.25) rectangle (.75,.75);
	\foreach \i in {1,...,4}
	{
		\pgfmathsetmacro{\a}{cos(360*(\i-1)/4+45)};
		\pgfmathsetmacro{\b}{sin(360*(\i-1)/4+45)};
		\pgfmathsetmacro{\c}{cos(360*(\i-1)/4+90)};
		\pgfmathsetmacro{\d}{sin(360*(\i-1)/4+90)};
		\draw [fill=black] (.5+.25*1.414*\a,.5+.25*1.414*\b) circle[radius=.05];
	}
   \draw [red] (.25,.5) circle [radius=.05];
   \draw [red] (.75,.5) circle [radius=.05];
   \draw [red] (.5,.25) circle [radius=.05];
   \draw [red] (.5,.75) circle [radius=.05];
    \node at (1.5,.5) {$=$};
    \draw (2,0)--(3,1);
    \draw [fill=black] (2,1)--(2.25,.75) circle [radius=.05];
    \draw [fill=black] (3,0)--(2.75,.25) circle [radius=.05];
    \node at (3.5,.5) {$+$};
    \draw (4,1)--(5,0);
    \draw [fill=black] (4,0)--(4.25,.25) circle [radius=.05];
    \draw [fill=black] (5,1)--(4.75,.75) circle [radius=.05];
    \node at (5.5,.5) {$+$};
    \draw (6,0)--(7,1);
    \draw (6,1)--(7,0);
    \draw [red] (6.25,.75)--(6.75,.75);
    \draw [red] (6.5,.75) circle [radius=.05];
    \draw [fill=black] (6.25,.75) circle [radius=.05];
    \draw [fill=black] (6.75,.75) circle [radius=.05];
	\end{tikzpicture}.
	\end{equation}		
	Note that the left-hand side of Equation \eqref{equ: second term of decomposition of R} belongs to $\mathscr{Q}_\bullet$ and so do the first two terms of the right hand side since they are generated by $\GHZ$ tensor. Therefore, we have\raisebox{-.4cm}{
	\begin{tikzpicture}
	\draw (6,0)--(7,1);
	\draw (6,1)--(7,0);
	\draw [red] (6.25,.75)--(6.75,.75);
	\draw [fill=black] (6.25,.75) circle [radius=.05];
	\draw [fill=black] (6.75,.75) circle [radius=.05];
	\draw [red] (6.5,.75) circle [radius=.05];
	\node at (7.5,.5) {$\in\mathscr{Q}_4$};
	\end{tikzpicture}}.
Now we proceed to show the third term of the right hand side of Equation \eqref{equ: decomposition of R} belongs to $\mathscr{Q}_{4}$. Consider the following element $B_{2}$ in $\mathscr{Q}_{4}$,
\begin{figure}[H]
	\begin{tikzpicture}
	\draw (0,0)--(.5,.5);
	\draw (0,.5)--(.5,0);
	\draw [red] (0,0)--(.5,0);
	\draw [red] (.25,0) circle [radius=.05];
	\draw [fill=black] (0,0) circle [radius=.05];
	\draw [fill=black] (.5,0) circle [radius=.05];
	\draw (0,-1)--(.5,-1.5);
	\draw (0,-1.5)--(.5,-1);
	\draw [red] (0,-1)--(.5,-1);
	\draw [red] (.25,-1) circle [radius=.05];
	\draw [fill=black] (0,-1) circle [radius=.05];
	\draw [fill=black] (.5,-1) circle [radius=.05];
	\draw [red] (-2,1)--(-.5,-.5)--(-2,-2);
	\draw [red] (-1.25,.25) circle[radius=.05];
	\draw [red] (-1.25,-1.25) circle[radius=.05];
	\draw [fill=black] (0,0)--(-.5,-.5) circle[radius=.05]--(0,-1);	
	\draw [green] (-2,1)--(-2,-2);
	\draw (.5,0)--(1,-.5)--(.5,-1);
	\draw [green] (0,.5)--(-2,1);
	\draw [green, ultra thick] (-1-.05*2.236/5,.75-.05*4*2.236/5)--(-1+.05*2.236/5,.75+.05*4*2.236/5);
	\draw [thick, green] (0,-1.5)--(-2,-2);
	\draw [green, ultra thick] (-1-.05*2.236/5,-1.75+.05*4*2.236/5)--(-1+.05*2.236/5,-1.75-.05*4*2.236/5);
	\draw [red] (.5,.5)--(1,1);
	\draw [red] (.75,.75) circle [radius=.05];
	\draw [red] (.5,-1.5)--(1,-2);
	\draw [red] (.75,-1.75) circle [radius=.05];
	\draw [green] (-2,-2)--(1,-2)--(1,-.5)--(1,1)--(-2,1);
	\draw [green, ultra thick] (-.5,-2.1)--(-.5,-1.9);
	\draw [green, ultra thick] (-.5,1.1)--(-.5,.9);
	\draw [green, ultra thick] (-1.9,-.5)--(-2.1,-.5);
	\draw [green, ultra thick] (.9,-1.25)--(1.1,-1.25);
	\draw [green, ultra thick] (.9,.25)--(1.1,.25);
	\draw [fill=black] (-2,-2) circle[radius=.05];
	\draw [fill=black] (-2,1) circle[radius=.05];
	\draw [fill=black] (1,-2) circle[radius=.05];
	\draw [fill=black] (1,1) circle[radius=.05];
	\draw [fill=black] (.5,.5) circle[radius=.05];
	\draw [fill=black] (0,.5) circle[radius=.05];
	\draw [fill=black] (0,-1.5) circle[radius=.05];
	\draw [fill=black] (.5,-1.5) circle[radius=.05];
	\draw [fill=black] (1,-.5) circle[radius=.05];
	\draw (-2,-2)--(-2.5,-2.5);
	\draw (-2,1)--(-2.5,1.5);
	\draw (1,1)--(1.5,1.5);
	\draw (1,-2)--(1.5,-2.5);
	\node at (-2,-2) [below] {$i_4$};
	\node at (-2,1) [above] {$i_1$};
	\node at (1,-2) [below] {$i_3$};
	\node at (1,1) [above] {$i_2$};
	\node at (1,-.5) [left] {$r$};
	\node at (-.5,-.5) [left] {$c$};
	\end{tikzpicture},
\end{figure}
Let $\vec{i}=(i_1,i_2,i_3,i_4)$ be an arbitrary rank-$4$ tensor in $\mathscr{P}_4$. By definition, we have that $\langle B_2,\vec{i}\rangle$ must take the value in $\{0,1\}$. Suppose $\langle B_2,\vec{i}\rangle=1$. By definition, there exists $c,r\in\{1,2,\cdots,10\}$ such that the following conditions are satisfied,
\begin{enumerate}
	\item\label{itm: con1} $(i_1,i_2)$, $(i_2,i_3)$, $(i_3,i_4)$, $(i_4,i_1)\in E(\Gamma^c)$. 
	\item\label{itm: con2} $(c,r)$, $(c,i_1)$, $(c,i_2)$, $(c,i_3)$, $(c,i_4)\in E(\Gamma)$. 
	\item\label{itm: con3} $(r,i_1)$, $(r,i_2)$, $(r,i_3)$, $(r,i_4)\in E(\Gamma^c)$. 
\end{enumerate}
Now we consider the Kneser construction of the Petersen graph $\Gamma$. Since $B_2\in \mathscr{P}_{4}^{S_5}$, for every $g\in S_5$ we must have $\langle B_2,\vec{i}\rangle=\langle B_2,g\cdot\vec{i}\rangle$. Without of generality, one can assume that $c=\{1,2\}$ and $r=\{3,4\}$. By Condition \eqref{itm: con2} and \eqref{itm: con3}, we have that $i_j\in\{\{3,5\},\{4,5\}\}$ for $1\leq j\leq 4$. Therefore, Condition \eqref{itm: con1} forces that $\vec{i}=(\{3,5\},\{4,5\},\{3,5\},\{4,5\})$ or $\vec{i}=(\{4,5\},\{3,5\},\{4,5\},\{3,5\})$. By definition, we have that $B_2= \raisebox{-.4cm}{\begin{tikzpicture}
		\draw (4+2,0)--(4+3,1);
	\draw (4+2,1)--(4+3,0);
	\draw [green] (4+2.75,.75)--(4+2.25,.75);
	\draw [fill=black](4+2.75,.75) circle [radius=.05];
	\draw [fill=black](4+2.25,.75) circle [radius=.05];
	\draw [green, ultra thick] (6.5,.65)--(6.5,.85);
	\end{tikzpicture}}$ and this implies that \raisebox{-.4cm}{\begin{tikzpicture}
	\draw (4+2,0)--(4+3,1);
	\draw (4+2,1)--(4+3,0);
	\draw [green] (4+2.75,.75)--(4+2.25,.75);
	\draw [fill=black](4+2.75,.75) circle [radius=.05];
	\draw [fill=black](4+2.25,.75) circle [radius=.05];
	\draw [green, ultra thick] (6.5,.65)--(6.5,.85);
	\end{tikzpicture}}$\in\mathscr{Q}_4$.
Therefore, we have that $R\in\mathscr{Q}_4$. By Theorem \ref{thm: 2-box generating}, the group-action model associated with the Petersen graph is generated by the $\GHZ$ tensor and rank-$2$ tensors. 
\end{proof}
\begin{corollary}\label{cor: sbugrouponeway}
	The group subfactor planar algebra for $S_2\times S_3\subset S_5$ is generated by its $2$-boxes.
\end{corollary}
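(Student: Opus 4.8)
The plan is to assemble the identifications and the generation theorem already established, so that the corollary becomes a matter of translating between the group-action and planar-algebra frameworks. First I would recall from Definition \ref{def: Petersen graph} that the automorphism group of the Petersen graph $\Gamma$ is $S_5$, that this action on the vertex set is transitive, and that the stabilizer of a single vertex is $S_2\times S_3$. Setting $d=[S_5:S_2\times S_3]=10$ and applying Corollary \ref{cor: group-subgroup} to the transitive action of $S_5$ on the $10$ right cosets of $S_2\times S_3$, the even part of the group-subgroup subfactor planar algebra for $S_2\times S_3\subset S_5$ is precisely the group-action model $\mathscr{P}_\bullet^{S_5}$ associated with $\Gamma$, since the $S_5$-action on cosets coincides with the $S_5$-action on the $10$ vertices of the Kneser graph $KG_{5,2}$.

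Next I would invoke Theorem \ref{thm: condition1}, which states that $\mathscr{P}_\bullet^{S_5}$ is generated by $\mathscr{P}_2^{S_5}$. Here I must be careful about the meaning of \emph{generated} in the two settings. In the planar algebra, ``generated by $2$-boxes'' means obtained by applying planar tangles to $2$-boxes, and these planar tangles already furnish the Frobenius-algebra operations encoded by the $\GHZ$ tensor: as noted after Corollary \ref{cor: GHZ}, in the checkerboard-shaded picture the $\GHZ$ relations become purely planar isotopy, so the $\GHZ$ tensor is part of the free planar structure. Consequently the submodel generated by $\mathscr{P}_2^{S_5}$ together with the $\GHZ$ tensor in the group-action model is exactly the planar subalgebra generated by the $2$-boxes, and the equivalence is the content of Question \ref{quest: VJ} versus Question \ref{ques: equiv formuation of ques of VJ}. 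Since Theorem \ref{thm: condition1} gives that this submodel equals $\mathscr{P}_\bullet^{S_5}$, the even part of the planar algebra is generated by its $2$-boxes.

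Finally I would pass from the even part to the full planar algebra $\mathscr{P}_{\bullet,\pm}$. By the computation at the end of the proof of Corollary \ref{cor: group-subgroup}, the odd part is recovered from the even part: the $1$-click rotation, i.e.\ the Fourier transform on subfactors, carries even boxes to odd boxes, so the skein theory of the even part determines the entire planar algebra. Hence the $2$-boxes generate the whole group-subgroup subfactor planar algebra for $S_2\times S_3\subset S_5$.

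The main obstacle is not located in this corollary at all: the substantive work has already been carried out in Theorem \ref{thm: condition1}, namely the explicit construction, via the combinatorial elements $B_1$ and $B_2$ extracted from the Kneser model of $\Gamma$, realizing the transposition $R$ from the $\GHZ$ tensor and the rank-$2$ tensors, together with the reduction of Theorem \ref{thm: 2-box generating} that builds the molecule $S$ from $A_\Gamma$ and $\GHZ$. What remains here is the bookkeeping that identifies $\mathscr{P}_\bullet^{S_5}$ with the even part of the subfactor planar algebra and translates the group-action notion of generation (which absorbs the $\GHZ$ tensor into the planar operadic structure) into the planar-algebraic statement that $2$-boxes generate under planar tangles.
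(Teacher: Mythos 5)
Your proposal is correct and follows exactly the route the paper intends: the paper states this corollary without a written proof, treating it as the immediate combination of Theorem \ref{thm: condition1} with the identification of $\mathscr{P}_\bullet^{S_5}$ as the even part of the group-subgroup planar algebra from Corollary \ref{cor: group-subgroup}, plus the standard passage from the even part to the full planar algebra. Your additional care in noting that the $\GHZ$ tensor is absorbed into the planar operadic structure (so the two notions of ``generated'' agree) is a worthwhile clarification of a point the paper leaves implicit.
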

\section{Appendix}\label{sec: Appendix}
Let $\mathscr{P}_\bullet$ be the group-subgroup subfactor planar algebra for $S_2\times S_3\subset S_5$. In this section, we give an explicit construction of a basis for the $4$-box space $\mathscr{P}_{4,+}$. We use \raisebox{-.2cm}{\begin{tikzpicture}
\path [fill=lightgray] (0,0)--(.6,0)--(0,.6)--(.6,.6);
\draw (0,0)--(.6,.6);
\draw (0,.6)--(.6,0);
\end{tikzpicture}} to represent the generator.
\begin{itemize}
	\item The 14 Temperley-Lieb diagrams.
	\item The diagrams containing 1 generator.
	\begin{align*}
		&\begin{tikzpicture}
		\draw [fill=lightgray] (0,0.8)--(0.3,0)--(0,0)--(0.3,0.8)--(0,0.8);
		\draw [fill=lightgray] (0.6,0.8)--(0.6,0)--(0.8999999999999999,0)--(0.8999999999999999,0.8)--(0.6,0.8);
		\path [fill=white] (-.05,.05) rectangle (.95,-.05);
		\path [fill=white] (-.05,.85) rectangle (.95,.75);
		\path (1.2,0) rectangle (1.5,.8);
		\end{tikzpicture}
		\begin{tikzpicture}
		\draw [fill=lightgray] (0,0.8)--(0.3,0)--(0,0) arc[radius=0.3, start angle=180, end angle=0]--(0.8999999999999999,0)--(0.3,0.8)--(0,0.8);
		\draw [fill=lightgray] (0.6,0.8) arc[radius=0.15, start angle=180, end angle=360]--(0.6,0.8);
		\path [fill=white] (-.05,.05) rectangle (.95,-.05);
		\path [fill=white] (-.05,.85) rectangle (.95,.75);
		\path (1.2,0) rectangle (1.5,.8);
		\end{tikzpicture}
		\begin{tikzpicture}
		\draw [fill=lightgray] (0,0.8) arc[radius=0.44999999999999996, start angle=180, end angle=360]--(0.6,0.8) arc[radius=0.15, start angle=360, end angle=180]--(0,0.8);
		\draw [fill=lightgray] (0,0) arc[radius=0.3, start angle=180, end angle=0]--(0.8999999999999999,0) arc[radius=0.3, start angle=0, end angle=180]--(0,0);
		\path [fill=white] (-.05,.05) rectangle (.95,-.05);
		\path [fill=white] (-.05,.85) rectangle (.95,.75);
		\path (1.2,0) rectangle (1.5,.8);
		\end{tikzpicture}
		\begin{tikzpicture}
		\draw [fill=lightgray] (0,0.8) arc[radius=0.15, start angle=180, end angle=360]--(0,0.8);
		\draw [fill=lightgray] (0.6,0.8)--(0,0)--(0.3,0) arc[radius=0.3, start angle=180, end angle=0]--(0.6,0)--(0.8999999999999999,0.8)--(0.6,0.8);
		\path [fill=white] (-.05,.05) rectangle (.95,-.05);
		\path [fill=white] (-.05,.85) rectangle (.95,.75);
		\path (1.2,0) rectangle (1.5,.8);
		\end{tikzpicture}
		\begin{tikzpicture}
		\draw [fill=lightgray] (0,0.8)--(0,0)--(0.3,0)--(0.3,0.8)--(0,0.8);
		\draw [fill=lightgray] (0.6,0.8)--(0.8999999999999999,0)--(0.6,0)--(0.8999999999999999,0.8)--(0.6,0.8);
		\path [fill=white] (-.05,.05) rectangle (.95,-.05);
		\path [fill=white] (-.05,.85) rectangle (.95,.75);
		\path (1.2,0) rectangle (1.5,.8);
		\end{tikzpicture}
		\begin{tikzpicture}
		\draw [fill=lightgray] (0,0.8)--(0.6,0)--(0.8999999999999999,0)--(0.6,0.8)--(0.8999999999999999,0.8) arc[radius=0.3, start angle=360, end angle=180]--(0,0.8);
		\draw [fill=lightgray] (0,0) arc[radius=0.15, start angle=180, end angle=0]--(0,0);
		\path [fill=white] (-.05,.05) rectangle (.95,-.05);
		\path [fill=white] (-.05,.85) rectangle (.95,.75);
		\path (1.2,0) rectangle (1.5,.8);
		\end{tikzpicture}
		\begin{tikzpicture}
		\draw [fill=lightgray] (0,0.8) arc[radius=0.3, start angle=180, end angle=360]--(0.8999999999999999,0.8) arc[radius=0.3, start angle=360, end angle=180]--(0,0.8);
		\draw [fill=lightgray] (0,0) arc[radius=0.44999999999999996, start angle=180, end angle=0]--(0.6,0) arc[radius=0.15, start angle=0, end angle=180]--(0,0);
		\path [fill=white] (-.05,.05) rectangle (.95,-.05);
		\path [fill=white] (-.05,.85) rectangle (.95,.75);
		\path (1.2,0) rectangle (1.5,.8);
		\end{tikzpicture}
		\begin{tikzpicture}
		\draw [fill=lightgray] (0,0.8) arc[radius=0.3, start angle=180, end angle=360]--(0.8999999999999999,0.8)--(0.3,0)--(0,0)--(0.3,0.8)--(0,0.8);
		\draw [fill=lightgray] (0.6,0) arc[radius=0.15, start angle=180, end angle=0]--(0.6,0);
		\path [fill=white] (-.05,.05) rectangle (.95,-.05);
		\path [fill=white] (-.05,.85) rectangle (.95,.75);
		\path (1.2,0) rectangle (1.5,.8);
		\end{tikzpicture}\\
		&\begin{tikzpicture}
		\draw [fill=lightgray] (0,0.8)--(0.3,0)--(0,0)--(0.8999999999999999,0.8)--(0.6,0.8) arc[radius=0.15, start angle=360, end angle=180]--(0,0.8);
		\draw [fill=lightgray] (0.6,0) arc[radius=0.15, start angle=180, end angle=0]--(0.6,0);
		\path [fill=white] (-.05,.05) rectangle (.95,-.05);
		\path [fill=white] (-.05,.85) rectangle (.95,.75);
		\path (1.2,0) rectangle (1.5,.8);
		\end{tikzpicture}
		\begin{tikzpicture}
		\draw [fill=lightgray] (0,0.8) arc[radius=0.15, start angle=180, end angle=360]--(0,0.8);
		\draw [fill=lightgray] (0.6,0.8)--(0.3,0)--(0,0) arc[radius=0.3, start angle=180, end angle=0]--(0.8999999999999999,0)--(0.8999999999999999,0.8)--(0.6,0.8);
		\path [fill=white] (-.05,.05) rectangle (.95,-.05);
		\path [fill=white] (-.05,.85) rectangle (.95,.75);
		\path (1.2,0) rectangle (1.5,.8);
		\end{tikzpicture}
		\begin{tikzpicture}
		\draw [fill=lightgray] (0,0.8)--(0,0)--(0.3,0) arc[radius=0.3, start angle=180, end angle=0]--(0.6,0)--(0.3,0.8)--(0,0.8);
		\draw [fill=lightgray] (0.6,0.8) arc[radius=0.15, start angle=180, end angle=360]--(0.6,0.8);
		\path [fill=white] (-.05,.05) rectangle (.95,-.05);
		\path [fill=white] (-.05,.85) rectangle (.95,.75);
		\path (1.2,0) rectangle (1.5,.8);
		\end{tikzpicture}
		\begin{tikzpicture}
		\draw [fill=lightgray] (0,0.8)--(0.8999999999999999,0)--(0.6,0)--(0.8999999999999999,0.8)--(0.6,0.8) arc[radius=0.15, start angle=360, end angle=180]--(0,0.8);
		\draw [fill=lightgray] (0,0) arc[radius=0.15, start angle=180, end angle=0]--(0,0);
		\path [fill=white] (-.05,.05) rectangle (.95,-.05);
		\path [fill=white] (-.05,.85) rectangle (.95,.75);
		\path (1.2,0) rectangle (1.5,.8);
		\end{tikzpicture}
		\begin{tikzpicture}
		\draw [fill=lightgray] (0,0.8) arc[radius=0.15, start angle=180, end angle=360]--(0,0.8);
		\draw [fill=lightgray] (0.6,0.8)--(0.8999999999999999,0)--(0.6,0) arc[radius=0.15, start angle=0, end angle=180]--(0,0)--(0.8999999999999999,0.8)--(0.6,0.8);
		\path [fill=white] (-.05,.05) rectangle (.95,-.05);
		\path [fill=white] (-.05,.85) rectangle (.95,.75);
		\path (1.2,0) rectangle (1.5,.8);
		\end{tikzpicture}
		\begin{tikzpicture}
		\draw [fill=lightgray] (0,0.8)--(0,0)--(0.3,0)--(0.6,0.8)--(0.8999999999999999,0.8) arc[radius=0.3, start angle=360, end angle=180]--(0,0.8);
		\draw [fill=lightgray] (0.6,0) arc[radius=0.15, start angle=180, end angle=0]--(0.6,0);
		\path [fill=white] (-.05,.05) rectangle (.95,-.05);
		\path [fill=white] (-.05,.85) rectangle (.95,.75);
		\path (1.2,0) rectangle (1.5,.8);
		\end{tikzpicture}
		\begin{tikzpicture}
		\draw [fill=lightgray] (0,0.8) arc[radius=0.3, start angle=180, end angle=360]--(0.8999999999999999,0.8)--(0.8999999999999999,0)--(0.6,0)--(0.3,0.8)--(0,0.8);
		\draw [fill=lightgray] (0,0) arc[radius=0.15, start angle=180, end angle=0]--(0,0);
		\path [fill=white] (-.05,.05) rectangle (.95,-.05);
		\path [fill=white] (-.05,.85) rectangle (.95,.75);
		\path (1.2,0) rectangle (1.5,.8);
		\end{tikzpicture}
		\begin{tikzpicture}
		\draw [fill=lightgray] (0,0.8)--(0.8999999999999999,0)--(0.6,0) arc[radius=0.15, start angle=0, end angle=180]--(0,0)--(0.3,0.8)--(0,0.8);
		\draw [fill=lightgray] (0.6,0.8) arc[radius=0.15, start angle=180, end angle=360]--(0.6,0.8);
		\path [fill=white] (-.05,.05) rectangle (.95,-.05);
		\path [fill=white] (-.05,.85) rectangle (.95,.75);
		\path (1.2,0) rectangle (1.5,.8);
		\end{tikzpicture}\\
		&\begin{tikzpicture}
		\draw [fill=lightgray] (0,0.8)--(0.3,0)--(0,0)--(0.3,0.8)--(0,0.8);
		\draw [fill=lightgray] (0.6,0.8) arc[radius=0.15, start angle=180, end angle=360]--(0.6,0.8);
		\draw [fill=lightgray] (0.6,0) arc[radius=0.15, start angle=180, end angle=0]--(0.6,0);
		\path [fill=white] (-.05,.05) rectangle (.95,-.05);
		\path [fill=white] (-.05,.85) rectangle (.95,.75);
		\path (1.2,0) rectangle (1.5,.8);
		\end{tikzpicture}
		\begin{tikzpicture}
		\draw [fill=lightgray] (0,0.8)--(0.3,0)--(0,0) arc[radius=0.3, start angle=180, end angle=0]--(0.8999999999999999,0)--(0.8999999999999999,0.8)--(0.6,0.8) arc[radius=0.15, start angle=360, end angle=180]--(0,0.8);
		\path [fill=white] (-.05,.05) rectangle (.95,-.05);
		\path [fill=white] (-.05,.85) rectangle (.95,.75);
		\path (1.2,0) rectangle (1.5,.8);
		\end{tikzpicture}
		\begin{tikzpicture}
		\draw [fill=lightgray] (0,0.8) arc[radius=0.15, start angle=180, end angle=360]--(0,0.8);
		\draw [fill=lightgray] (0.6,0.8) arc[radius=0.15, start angle=180, end angle=360]--(0.6,0.8);
		\draw [fill=lightgray] (0,0) arc[radius=0.3, start angle=180, end angle=0]--(0.8999999999999999,0) arc[radius=0.3, start angle=0, end angle=180]--(0,0);
		\path [fill=white] (-.05,.05) rectangle (.95,-.05);
		\path [fill=white] (-.05,.85) rectangle (.95,.75);
		\path (1.2,0) rectangle (1.5,.8);
		\end{tikzpicture}
		\begin{tikzpicture}
		\draw [fill=lightgray] (0,0.8)--(0,0)--(0.3,0) arc[radius=0.3, start angle=180, end angle=0]--(0.6,0)--(0.8999999999999999,0.8)--(0.6,0.8) arc[radius=0.15, start angle=360, end angle=180]--(0,0.8);
		\path [fill=white] (-.05,.05) rectangle (.95,-.05);
		\path [fill=white] (-.05,.85) rectangle (.95,.75);
		\path (1.2,0) rectangle (1.5,.8);
		\end{tikzpicture}
		\begin{tikzpicture}
		\draw [fill=lightgray] (0,0.8) arc[radius=0.15, start angle=180, end angle=360]--(0,0.8);
		\draw [fill=lightgray] (0.6,0.8)--(0.8999999999999999,0)--(0.6,0)--(0.8999999999999999,0.8)--(0.6,0.8);
		\draw [fill=lightgray] (0,0) arc[radius=0.15, start angle=180, end angle=0]--(0,0);
		\path [fill=white] (-.05,.05) rectangle (.95,-.05);
		\path [fill=white] (-.05,.85) rectangle (.95,.75);
		\path (1.2,0) rectangle (1.5,.8);
		\end{tikzpicture}
		\begin{tikzpicture}
		\draw [fill=lightgray] (0,0.8)--(0,0)--(0.3,0) arc[radius=0.15, start angle=180, end angle=0]--(0.8999999999999999,0)--(0.6,0.8)--(0.8999999999999999,0.8) arc[radius=0.3, start angle=360, end angle=180]--(0,0.8);
		\path [fill=white] (-.05,.05) rectangle (.95,-.05);
		\path [fill=white] (-.05,.85) rectangle (.95,.75);
		\path (1.2,0) rectangle (1.5,.8);
		\end{tikzpicture}
		\begin{tikzpicture}
		\draw [fill=lightgray] (0,0.8) arc[radius=0.3, start angle=180, end angle=360]--(0.8999999999999999,0.8) arc[radius=0.3, start angle=360, end angle=180]--(0,0.8);
		\draw [fill=lightgray] (0,0) arc[radius=0.15, start angle=180, end angle=0]--(0,0);
		\draw [fill=lightgray] (0.6,0) arc[radius=0.15, start angle=180, end angle=0]--(0.6,0);
		\path [fill=white] (-.05,.05) rectangle (.95,-.05);
		\path [fill=white] (-.05,.85) rectangle (.95,.75);
		\path (1.2,0) rectangle (1.5,.8);
		\end{tikzpicture}
		\begin{tikzpicture}
		\draw [fill=lightgray] (0,0.8) arc[radius=0.3, start angle=180, end angle=360]--(0.8999999999999999,0.8)--(0.8999999999999999,0)--(0.6,0) arc[radius=0.15, start angle=0, end angle=180]--(0,0)--(0.3,0.8)--(0,0.8);
		\path [fill=white] (-.05,.05) rectangle (.95,-.05);
		\path [fill=white] (-.05,.85) rectangle (.95,.75);
		\path (1.2,0) rectangle (1.5,.8);
		\end{tikzpicture}\\
		&\begin{tikzpicture}
		\draw [fill=lightgray] (0,0.8)--(0,0)--(0.3,0)--(0.6,0.8)--(0.8999999999999999,0.8)--(0.8999999999999999,0)--(0.6,0)--(0.3,0.8)--(0,0.8);
		\path [fill=white] (-.05,.05) rectangle (.95,-.05);
		\path [fill=white] (-.05,.85) rectangle (.95,.75);
		\path (1.2,0) rectangle (1.5,.8);
		\end{tikzpicture}
		\begin{tikzpicture}
		\draw [fill=lightgray] (0,0.8)--(0.8999999999999999,0)--(0.6,0)--(0.3,0.8)--(0,0.8);
		\draw [fill=lightgray] (0.6,0.8) arc[radius=0.15, start angle=180, end angle=360]--(0.6,0.8);
		\draw [fill=lightgray] (0,0) arc[radius=0.15, start angle=180, end angle=0]--(0,0);
		\path [fill=white] (-.05,.05) rectangle (.95,-.05);
		\path [fill=white] (-.05,.85) rectangle (.95,.75);
		\path (1.2,0) rectangle (1.5,.8);
		\end{tikzpicture}
		\begin{tikzpicture}
		\draw [fill=lightgray] (0,0.8)--(0.8999999999999999,0)--(0.6,0) arc[radius=0.15, start angle=0, end angle=180]--(0,0)--(0.8999999999999999,0.8)--(0.6,0.8) arc[radius=0.15, start angle=360, end angle=180]--(0,0.8);
		\path [fill=white] (-.05,.05) rectangle (.95,-.05);
		\path [fill=white] (-.05,.85) rectangle (.95,.75);
		\path (1.2,0) rectangle (1.5,.8);
		\end{tikzpicture}
		\begin{tikzpicture}
		\draw [fill=lightgray] (0,0.8) arc[radius=0.15, start angle=180, end angle=360]--(0,0.8);
		\draw [fill=lightgray] (0.6,0.8)--(0.3,0)--(0,0)--(0.8999999999999999,0.8)--(0.6,0.8);
		\draw [fill=lightgray] (0.6,0) arc[radius=0.15, start angle=180, end angle=0]--(0.6,0);
		\path [fill=white] (-.05,.05) rectangle (.95,-.05);
		\path [fill=white] (-.05,.85) rectangle (.95,.75);
		\path (1.2,0) rectangle (1.5,.8);
		\end{tikzpicture}
	\end{align*}	
	\item The diagrams containing 2 generator.
	\begin{align*}
	&\begin{tikzpicture}
	\draw [fill=lightgray] (0,0.8)--(0.3,0)--(0,0)--(0.3,0.8)--(0,0.8);
	\draw [fill=lightgray] (0.6,0.8)--(0.8999999999999999,0)--(0.6,0)--(0.8999999999999999,0.8)--(0.6,0.8);
	\path [fill=white] (-.05,.05) rectangle (.95,-.05);
	\path [fill=white] (-.05,.85) rectangle (.95,.75);
	\path (1.2,0) rectangle (1.5,.8);
	\end{tikzpicture}
	\begin{tikzpicture}
	\draw [fill=lightgray] (0,0.8)--(0.3,0)--(0,0) arc[radius=0.3, start angle=180, end angle=0]--(0.8999999999999999,0)--(0.6,0.8)--(0.8999999999999999,0.8) arc[radius=0.3, start angle=360, end angle=180]--(0,0.8);
	\path [fill=white] (-.05,.05) rectangle (.95,-.05);
	\path [fill=white] (-.05,.85) rectangle (.95,.75);
	\path (1.2,0) rectangle (1.5,.8);
	\end{tikzpicture}
	\begin{tikzpicture}
	\draw [fill=lightgray] (0,0.8) arc[radius=0.3, start angle=180, end angle=360]--(0.8999999999999999,0.8) arc[radius=0.3, start angle=360, end angle=180]--(0,0.8);
	\draw [fill=lightgray] (0,0) arc[radius=0.3, start angle=180, end angle=0]--(0.8999999999999999,0) arc[radius=0.3, start angle=0, end angle=180]--(0,0);
	\path [fill=white] (-.05,.05) rectangle (.95,-.05);
	\path [fill=white] (-.05,.85) rectangle (.95,.75);
	\path (1.2,0) rectangle (1.5,.8);
	\end{tikzpicture}
	\begin{tikzpicture}
	\draw [fill=lightgray] (0,0.8) arc[radius=0.3, start angle=180, end angle=360]--(0.8999999999999999,0.8)--(0.6,0)--(0.8999999999999999,0) arc[radius=0.3, start angle=0, end angle=180]--(0,0)--(0.3,0.8)--(0,0.8);
	\path [fill=white] (-.05,.05) rectangle (.95,-.05);
	\path [fill=white] (-.05,.85) rectangle (.95,.75);
	\path (1.2,0) rectangle (1.5,.8);
	\end{tikzpicture}\\
	&\begin{tikzpicture}
	\draw [fill=lightgray] (0,0.8)--(0.6,0)--(0.8999999999999999,0)--(0.8999999999999999,0.8)--(0.6,0.8)--(0.3,0)--(0,0)--(0.3,0.8)--(0,0.8);
	\path [fill=white] (-.05,.05) rectangle (.95,-.05);
	\path [fill=white] (-.05,.85) rectangle (.95,.75);
	\path (1.2,0) rectangle (1.5,.8);
	\end{tikzpicture}
	\begin{tikzpicture}
	\draw [fill=lightgray] (0,0.8)--(0.3,0)--(0,0) arc[radius=0.44999999999999996, start angle=180, end angle=0]--(0.6,0)--(0.3,0.8)--(0,0.8);
	\draw [fill=lightgray] (0.6,0.8) arc[radius=0.15, start angle=180, end angle=360]--(0.6,0.8);
	\path [fill=white] (-.05,.05) rectangle (.95,-.05);
	\path [fill=white] (-.05,.85) rectangle (.95,.75);
	\path (1.2,0) rectangle (1.5,.8);
	\end{tikzpicture}
	\begin{tikzpicture}
	\draw [fill=lightgray] (0,0.8)--(0.8999999999999999,0)--(0.6,0) arc[radius=0.3, start angle=0, end angle=180]--(0.3,0)--(0.8999999999999999,0.8)--(0.6,0.8) arc[radius=0.15, start angle=360, end angle=180]--(0,0.8);
	\path [fill=white] (-.05,.05) rectangle (.95,-.05);
	\path [fill=white] (-.05,.85) rectangle (.95,.75);
	\path (1.2,0) rectangle (1.5,.8);
	\end{tikzpicture}
	\begin{tikzpicture}
	\draw [fill=lightgray] (0,0.8) arc[radius=0.15, start angle=180, end angle=360]--(0,0.8);
	\draw [fill=lightgray] (0.6,0.8)--(0.6,0)--(0.8999999999999999,0) arc[radius=0.3, start angle=0, end angle=180]--(0,0)--(0.8999999999999999,0.8)--(0.6,0.8);
	\path [fill=white] (-.05,.05) rectangle (.95,-.05);
	\path [fill=white] (-.05,.85) rectangle (.95,.75);
	\path (1.2,0) rectangle (1.5,.8);
	\end{tikzpicture}
	\begin{tikzpicture}
	\draw [fill=lightgray] (0,0.8)--(0,0)--(0.3,0)--(0.6,0.8)--(0.8999999999999999,0.8)--(0.6,0)--(0.8999999999999999,0)--(0.3,0.8)--(0,0.8);
	\path [fill=white] (-.05,.05) rectangle (.95,-.05);
	\path [fill=white] (-.05,.85) rectangle (.95,.75);
	\path (1.2,0) rectangle (1.5,.8);
	\end{tikzpicture}
	\begin{tikzpicture}
	\draw [fill=lightgray] (0,0.8) arc[radius=0.44999999999999996, start angle=180, end angle=360]--(0.6,0.8)--(0.8999999999999999,0)--(0.6,0)--(0.3,0.8)--(0,0.8);
	\draw [fill=lightgray] (0,0) arc[radius=0.15, start angle=180, end angle=0]--(0,0);
	\path [fill=white] (-.05,.05) rectangle (.95,-.05);
	\path [fill=white] (-.05,.85) rectangle (.95,.75);
	\path (1.2,0) rectangle (1.5,.8);
	\end{tikzpicture}
	\begin{tikzpicture}
	\draw [fill=lightgray] (0,0.8)--(0.8999999999999999,0)--(0.6,0) arc[radius=0.15, start angle=0, end angle=180]--(0,0)--(0.6,0.8)--(0.8999999999999999,0.8) arc[radius=0.3, start angle=360, end angle=180]--(0,0.8);
	\path [fill=white] (-.05,.05) rectangle (.95,-.05);
	\path [fill=white] (-.05,.85) rectangle (.95,.75);
	\path (1.2,0) rectangle (1.5,.8);
	\end{tikzpicture}
	\begin{tikzpicture}
	\draw [fill=lightgray] (0,0.8) arc[radius=0.3, start angle=180, end angle=360]--(0.8999999999999999,0.8)--(0,0)--(0.3,0)--(0.3,0.8)--(0,0.8);
	\draw [fill=lightgray] (0.6,0) arc[radius=0.15, start angle=180, end angle=0]--(0.6,0);
	\path [fill=white] (-.05,.05) rectangle (.95,-.05);
	\path [fill=white] (-.05,.85) rectangle (.95,.75);
	\path (1.2,0) rectangle (1.5,.8);
	\end{tikzpicture}\\
	&\begin{tikzpicture}
	\draw [fill=lightgray] (0,0.8)--(0.3,0)--(0,0)--(0.6,0.8)--(0.8999999999999999,0.8)--(0.8999999999999999,0)--(0.6,0)--(0.3,0.8)--(0,0.8);
	\path [fill=white] (-.05,.05) rectangle (.95,-.05);
	\path [fill=white] (-.05,.85) rectangle (.95,.75);
	\path (1.2,0) rectangle (1.5,.8);
	\end{tikzpicture}
	\begin{tikzpicture}
	\draw [fill=lightgray] (0,0.8)--(0.8999999999999999,0)--(0.6,0) arc[radius=0.3, start angle=0, end angle=180]--(0.3,0)--(0.3,0.8)--(0,0.8);
	\draw [fill=lightgray] (0.6,0.8) arc[radius=0.15, start angle=180, end angle=360]--(0.6,0.8);
	\path [fill=white] (-.05,.05) rectangle (.95,-.05);
	\path [fill=white] (-.05,.85) rectangle (.95,.75);
	\path (1.2,0) rectangle (1.5,.8);
	\end{tikzpicture}
	\begin{tikzpicture}
	\draw [fill=lightgray] (0,0.8)--(0.6,0)--(0.8999999999999999,0) arc[radius=0.3, start angle=0, end angle=180]--(0,0)--(0.8999999999999999,0.8)--(0.6,0.8) arc[radius=0.15, start angle=360, end angle=180]--(0,0.8);
	\path [fill=white] (-.05,.05) rectangle (.95,-.05);
	\path [fill=white] (-.05,.85) rectangle (.95,.75);
	\path (1.2,0) rectangle (1.5,.8);
	\end{tikzpicture}
	\begin{tikzpicture}
	\draw [fill=lightgray] (0,0.8) arc[radius=0.15, start angle=180, end angle=360]--(0,0.8);
	\draw [fill=lightgray] (0.6,0.8)--(0.3,0)--(0,0) arc[radius=0.44999999999999996, start angle=180, end angle=0]--(0.6,0)--(0.8999999999999999,0.8)--(0.6,0.8);
	\path [fill=white] (-.05,.05) rectangle (.95,-.05);
	\path [fill=white] (-.05,.85) rectangle (.95,.75);
	\path (1.2,0) rectangle (1.5,.8);
	\end{tikzpicture}
	\begin{tikzpicture}
	\draw [fill=lightgray] (0,0.8)--(0,0)--(0.3,0)--(0.8999999999999999,0.8)--(0.6,0.8)--(0.8999999999999999,0)--(0.6,0)--(0.3,0.8)--(0,0.8);
	\path [fill=white] (-.05,.05) rectangle (.95,-.05);
	\path [fill=white] (-.05,.85) rectangle (.95,.75);
	\path (1.2,0) rectangle (1.5,.8);
	\end{tikzpicture}
	\begin{tikzpicture}
	\draw [fill=lightgray] (0,0.8)--(0.8999999999999999,0)--(0.6,0)--(0.6,0.8)--(0.8999999999999999,0.8) arc[radius=0.3, start angle=360, end angle=180]--(0,0.8);
	\draw [fill=lightgray] (0,0) arc[radius=0.15, start angle=180, end angle=0]--(0,0);
	\path [fill=white] (-.05,.05) rectangle (.95,-.05);
	\path [fill=white] (-.05,.85) rectangle (.95,.75);
	\path (1.2,0) rectangle (1.5,.8);
	\end{tikzpicture}
	\begin{tikzpicture}
	\draw [fill=lightgray] (0,0.8) arc[radius=0.3, start angle=180, end angle=360]--(0.8999999999999999,0.8)--(0,0)--(0.3,0) arc[radius=0.15, start angle=180, end angle=0]--(0.8999999999999999,0)--(0.3,0.8)--(0,0.8);
	\path [fill=white] (-.05,.05) rectangle (.95,-.05);
	\path [fill=white] (-.05,.85) rectangle (.95,.75);
	\path (1.2,0) rectangle (1.5,.8);
	\end{tikzpicture}
	\begin{tikzpicture}
	\draw [fill=lightgray] (0,0.8) arc[radius=0.44999999999999996, start angle=180, end angle=360]--(0.6,0.8)--(0.3,0)--(0,0)--(0.3,0.8)--(0,0.8);
	\draw [fill=lightgray] (0.6,0) arc[radius=0.15, start angle=180, end angle=0]--(0.6,0);
	\path [fill=white] (-.05,.05) rectangle (.95,-.05);
	\path [fill=white] (-.05,.85) rectangle (.95,.75);
	\path (1.2,0) rectangle (1.5,.8);
	\end{tikzpicture}\\
	&\begin{tikzpicture}
	\draw [fill=lightgray] (0,0.8) arc[radius=0.3, start angle=180, end angle=360]--(0.8999999999999999,0.8)--(0.8999999999999999,0)--(0.6,0) arc[radius=0.3, start angle=0, end angle=180]--(0.3,0)--(0.3,0.8)--(0,0.8);
	\path [fill=white] (-.05,.05) rectangle (.95,-.05);
	\path [fill=white] (-.05,.85) rectangle (.95,.75);
	\path (1.2,0) rectangle (1.5,.8);
	\end{tikzpicture}
	\begin{tikzpicture}
	\draw [fill=lightgray] (0,0.8)--(0.6,0)--(0.8999999999999999,0) arc[radius=0.3, start angle=0, end angle=180]--(0,0)--(0.3,0.8)--(0,0.8);
	\draw [fill=lightgray] (0.6,0.8) arc[radius=0.15, start angle=180, end angle=360]--(0.6,0.8);
	\path [fill=white] (-.05,.05) rectangle (.95,-.05);
	\path [fill=white] (-.05,.85) rectangle (.95,.75);
	\path (1.2,0) rectangle (1.5,.8);
	\end{tikzpicture}
	\begin{tikzpicture}
	\draw [fill=lightgray] (0,0.8)--(0.3,0)--(0,0) arc[radius=0.44999999999999996, start angle=180, end angle=0]--(0.6,0)--(0.8999999999999999,0.8)--(0.6,0.8) arc[radius=0.15, start angle=360, end angle=180]--(0,0.8);
	\path [fill=white] (-.05,.05) rectangle (.95,-.05);
	\path [fill=white] (-.05,.85) rectangle (.95,.75);
	\path (1.2,0) rectangle (1.5,.8);
	\end{tikzpicture}
	\begin{tikzpicture}
	\draw [fill=lightgray] (0,0.8) arc[radius=0.15, start angle=180, end angle=360]--(0,0.8);
	\draw [fill=lightgray] (0.6,0.8)--(0.8999999999999999,0)--(0.6,0) arc[radius=0.3, start angle=0, end angle=180]--(0.3,0)--(0.8999999999999999,0.8)--(0.6,0.8);
	\path [fill=white] (-.05,.05) rectangle (.95,-.05);
	\path [fill=white] (-.05,.85) rectangle (.95,.75);
	\path (1.2,0) rectangle (1.5,.8);
	\end{tikzpicture}
	\begin{tikzpicture}
	\draw [fill=lightgray] (0,0.8)--(0,0)--(0.3,0) arc[radius=0.3, start angle=180, end angle=0]--(0.6,0)--(0.6,0.8)--(0.8999999999999999,0.8) arc[radius=0.3, start angle=360, end angle=180]--(0,0.8);
	\path [fill=white] (-.05,.05) rectangle (.95,-.05);
	\path [fill=white] (-.05,.85) rectangle (.95,.75);
	\path (1.2,0) rectangle (1.5,.8);
	\end{tikzpicture}
	\begin{tikzpicture}
	\draw [fill=lightgray] (0,0.8) arc[radius=0.3, start angle=180, end angle=360]--(0.8999999999999999,0.8)--(0.6,0)--(0.8999999999999999,0)--(0.3,0.8)--(0,0.8);
	\draw [fill=lightgray] (0,0) arc[radius=0.15, start angle=180, end angle=0]--(0,0);
	\path [fill=white] (-.05,.05) rectangle (.95,-.05);
	\path [fill=white] (-.05,.85) rectangle (.95,.75);
	\path (1.2,0) rectangle (1.5,.8);
	\end{tikzpicture}
	\begin{tikzpicture}
	\draw [fill=lightgray] (0,0.8) arc[radius=0.44999999999999996, start angle=180, end angle=360]--(0.6,0.8)--(0.8999999999999999,0)--(0.6,0) arc[radius=0.15, start angle=0, end angle=180]--(0,0)--(0.3,0.8)--(0,0.8);
	\path [fill=white] (-.05,.05) rectangle (.95,-.05);
	\path [fill=white] (-.05,.85) rectangle (.95,.75);
	\path (1.2,0) rectangle (1.5,.8);
	\end{tikzpicture}
	\begin{tikzpicture}
	\draw [fill=lightgray] (0,0.8)--(0.3,0)--(0,0)--(0.6,0.8)--(0.8999999999999999,0.8) arc[radius=0.3, start angle=360, end angle=180]--(0,0.8);
	\draw [fill=lightgray] (0.6,0) arc[radius=0.15, start angle=180, end angle=0]--(0.6,0);
	\path [fill=white] (-.05,.05) rectangle (.95,-.05);
	\path [fill=white] (-.05,.85) rectangle (.95,.75);
	\path (1.2,0) rectangle (1.5,.8);
	\end{tikzpicture}
	\end{align*}
	\item The diagrams containing 3 generator.
	\begin{align*}
	&\begin{tikzpicture}
	\draw [fill=lightgray] (0,0)--(.6,.8)--(.9,.8)--(.9,0)--(.6,0)--(0,.8)--(.3,.8) arc [radius=.8, start angle=150, end angle=210]--(0,0);
	\path [fill=white] (-.05,.05) rectangle (.95,-.05);
	\path [fill=white] (-.05,.85) rectangle (.95,.75);
	\path (1.2,0) rectangle (1.5,.8);
	\end{tikzpicture}
	\begin{tikzpicture}
	\draw [fill=lightgray] (0,0) arc [radius=.2, start angle=180, end angle=90]--(.7,.2) arc[radius=.2, start angle=90, end angle=0]--(.6,0)--(.6,.2)--(0,.8)--(.3,.8) arc[radius=.15, start angle=180, end angle=360]--(.9,.8)--(.3,.2)--(.3,.0);
	\path [fill=white] (-.05,.05) rectangle (.95,-.05);
	\path [fill=white] (-.05,.85) rectangle (.95,.75);
	\path (1.2,0) rectangle (1.5,.8);
	\end{tikzpicture}
	\begin{tikzpicture}
	\draw [fill=lightgray] (0,0.8)--(0.6,0)--(0.8999999999999999,0) arc[radius=0.2, start angle=0, end angle=90]--(.7,.2)--(.2,.2) arc[radius=.2, start angle=90, end angle=180]--(0.3,0)--(.3,.8)--(0,0.8);
	\draw [fill=lightgray] (0.6,0.8) arc[radius=0.15, start angle=180, end angle=360]--(0.6,0.8);
	\path [fill=white] (-.05,.05) rectangle (.95,-.05);
	\path [fill=white] (-.05,.85) rectangle (.95,.75);
	\path (1.2,0) rectangle (1.5,.8);
	\end{tikzpicture}
	\begin{tikzpicture}
	\draw [fill=lightgray] (0,0.8) arc[radius=0.15, start angle=180, end angle=360]--(0,0.8);
	\draw [fill=lightgray] (0.6,0.8)--(0.6,0)--(0.8999999999999999,0) arc[radius=0.2, start angle=0, end angle=90]--(.2,.2) arc[radius=.2, start angle=90, end angle=180]--(0.3,0)--(0.8999999999999999,0.8)--(0.6,0.8);
	\path [fill=white] (-.05,.05) rectangle (.95,-.05);
	\path [fill=white] (-.05,.85) rectangle (.95,.75);
	\path (1.2,0) rectangle (1.5,.8);
	\end{tikzpicture}
	\begin{tikzpicture}
	\draw [fill=lightgray] (0,0.8)--(0.3,0)--(0,0)--(0.8999999999999999,0.8)--(0.6,0.8) arc[radius=.8, start angle=30, end angle=-30]--(0.8999999999999999,0)--(0.3,0.8)--(0,0.8);
	\path [fill=white] (-.05,.05) rectangle (.95,-.05);
	\path [fill=white] (-.05,.85) rectangle (.95,.75);
	\path (1.2,0) rectangle (1.5,.8);
	\end{tikzpicture}
	\begin{tikzpicture}
	\draw [fill=lightgray] (0,0.8) arc[radius=0.2, start angle=180, end angle=270]--(.7,.6) arc[radius=.2, start angle=270, end angle=360]--(0.6,0.8)--(0.6,0)--(0.8999999999999999,0)--(0.3,0.8)--(0,0.8);
	\draw [fill=lightgray] (0,0) arc[radius=0.15, start angle=180, end angle=0]--(0,0);
	\path [fill=white] (-.05,.05) rectangle (.95,-.05);
	\path [fill=white] (-.05,.85) rectangle (.95,.75);
	\path (1.2,0) rectangle (1.5,.8);
	\end{tikzpicture}
	\begin{tikzpicture}
	\draw [fill=lightgray] (0,0.8) arc[radius=0.2, start angle=180, end angle=270]--(.7,.6) arc[radius=.2, start angle=270, end angle=360]--(0.6,0.8)--(.6,.6)--(0,0)--(0.3,0) arc[radius=0.15, start angle=180, end angle=0]--(0.8999999999999999,0)--(0.3,0.6)--(.3,.8)--(0,0.8);
	\path [fill=white] (-.05,.05) rectangle (.95,-.05);
	\path [fill=white] (-.05,.85) rectangle (.95,.75);
	\path (1.2,0) rectangle (1.5,.8);
	\end{tikzpicture}
	\begin{tikzpicture}
	\draw [fill=lightgray] (0,0.8) arc[radius=0.2, start angle=180, end angle=270]--(.7,.6) arc[radius=.2, start angle=270, end angle=360]--(0.6,0.8)--(0,0)--(0.3,0)--(0.3,0.8)--(0,0.8);
	\draw [fill=lightgray] (0.6,0) arc[radius=0.15, start angle=180, end angle=0]--(0.6,0);
	\path [fill=white] (-.05,.05) rectangle (.95,-.05);
	\path [fill=white] (-.05,.85) rectangle (.95,.75);
	\path (1.2,0) rectangle (1.5,.8);
	\end{tikzpicture}\\
	&\begin{tikzpicture}
	\draw [fill=lightgray] (0,0.8)--(0.3,0)--(0,0)--(0.8999999999999999,0.8)--(0.6,0.8)--(0.8999999999999999,0)--(0.6,0)--(0.3,0.8)--(0,0.8);
	\path [fill=white] (-.05,.05) rectangle (.95,-.05);
	\path [fill=white] (-.05,.85) rectangle (.95,.75);
	\path (1.2,0) rectangle (1.5,.8);
	\end{tikzpicture}
	\begin{tikzpicture}
	\draw [fill=lightgray] (0,0.8)--(0.8999999999999999,0)--(0.6,0) arc[radius=0.3, start angle=0, end angle=180]--(0.3,0)--(0.6,0.8)--(0.8999999999999999,0.8) arc[radius=0.3, start angle=360, end angle=180]--(0,0.8);
	\path [fill=white] (-.05,.05) rectangle (.95,-.05);
	\path [fill=white] (-.05,.85) rectangle (.95,.75);
	\path (1.2,0) rectangle (1.5,.8);
	\end{tikzpicture}
	\begin{tikzpicture}
	\draw [fill=lightgray] (0,0.8) arc[radius=0.3, start angle=180, end angle=360]--(0.8999999999999999,0.8)--(0,0)--(0.3,0) arc[radius=0.3, start angle=180, end angle=0]--(0.6,0)--(0.3,0.8)--(0,0.8);
	\path [fill=white] (-.05,.05) rectangle (.95,-.05);
	\path [fill=white] (-.05,.85) rectangle (.95,.75);
	\path (1.2,0) rectangle (1.5,.8);
	\end{tikzpicture}
	\begin{tikzpicture}
	\draw [fill=lightgray] (0,0.8)--(0.8999999999999999,0)--(0.6,0)--(0.8999999999999999,0.8)--(0.6,0.8)--(0.3,0)--(0,0)--(0.3,0.8)--(0,0.8);
	\path [fill=white] (-.05,.05) rectangle (.95,-.05);
	\path [fill=white] (-.05,.85) rectangle (.95,.75);
	\path (1.2,0) rectangle (1.5,.8);
	\end{tikzpicture}
	\begin{tikzpicture}
	\draw [fill=lightgray] (0,0.8)--(0.6,0)--(0.8999999999999999,0) arc[radius=0.3, start angle=0, end angle=180]--(0,0)--(0.6,0.8)--(0.8999999999999999,0.8) arc[radius=0.3, start angle=360, end angle=180]--(0,0.8);
	\path [fill=white] (-.05,.05) rectangle (.95,-.05);
	\path [fill=white] (-.05,.85) rectangle (.95,.75);
	\path (1.2,0) rectangle (1.5,.8);
	\end{tikzpicture}
	\begin{tikzpicture}
	\draw [fill=lightgray] (0,0.8) arc[radius=0.3, start angle=180, end angle=360]--(0.8999999999999999,0.8)--(0.6,0)--(0.8999999999999999,0) arc[radius=0.44999999999999996, start angle=0, end angle=180]--(0.3,0)--(0.3,0.8)--(0,0.8);
	\path [fill=white] (-.05,.05) rectangle (.95,-.05);
	\path [fill=white] (-.05,.85) rectangle (.95,.75);
	\path (1.2,0) rectangle (1.5,.8);
	\end{tikzpicture}
	\begin{tikzpicture}
	\draw [fill=lightgray] (0,0.8)--(0.6,0)--(0.8999999999999999,0)--(0.6,0.8)--(0.8999999999999999,0.8)--(0.3,0)--(0,0)--(0.3,0.8)--(0,0.8);
	\path [fill=white] (-.05,.05) rectangle (.95,-.05);
	\path [fill=white] (-.05,.85) rectangle (.95,.75);
	\path (1.2,0) rectangle (1.5,.8);
	\end{tikzpicture}
	\begin{tikzpicture}
	\draw [fill=lightgray] (0,0.8)--(0.3,0)--(0,0) arc[radius=0.44999999999999996, start angle=180, end angle=0]--(0.6,0)--(0.6,0.8)--(0.8999999999999999,0.8) arc[radius=0.3, start angle=360, end angle=180]--(0,0.8);
	\path [fill=white] (-.05,.05) rectangle (.95,-.05);
	\path [fill=white] (-.05,.85) rectangle (.95,.75);
	\path (1.2,0) rectangle (1.5,.8);
	\end{tikzpicture}\\
	&\begin{tikzpicture}
	\draw [fill=lightgray] (0,0.8) arc[radius=0.3, start angle=180, end angle=360]--(0.8999999999999999,0.8)--(0.3,0)--(0,0) arc[radius=0.3, start angle=180, end angle=0]--(0.8999999999999999,0)--(0.3,0.8)--(0,0.8);
	\path [fill=white] (-.05,.05) rectangle (.95,-.05);
	\path [fill=white] (-.05,.85) rectangle (.95,.75);
	\path (1.2,0) rectangle (1.5,.8);
	\end{tikzpicture}
	\begin{tikzpicture}
	\draw [fill=lightgray] (0,0.8) arc[radius=0.44999999999999996, start angle=180, end angle=360]--(0.6,0.8)--(0.6,0)--(0.8999999999999999,0) arc[radius=0.3, start angle=0, end angle=180]--(0,0)--(0.3,0.8)--(0,0.8);
	\path [fill=white] (-.05,.05) rectangle (.95,-.05);
	\path [fill=white] (-.05,.85) rectangle (.95,.75);
	\path (1.2,0) rectangle (1.5,.8);
	\end{tikzpicture}
	\begin{tikzpicture}
	\draw [fill=lightgray] (0,0.8)--(0.3,0)--(0,0)--(0.6,0.8)--(0.8999999999999999,0.8)--(0.6,0)--(0.8999999999999999,0)--(0.3,0.8)--(0,0.8);
	\path [fill=white] (-.05,.05) rectangle (.95,-.05);
	\path [fill=white] (-.05,.85) rectangle (.95,.75);
	\path (1.2,0) rectangle (1.5,.8);
	\end{tikzpicture}
	\begin{tikzpicture}
	\draw [fill=lightgray] (0,0.8) arc[radius=0.44999999999999996, start angle=180, end angle=360]--(0.6,0.8)--(0.8999999999999999,0)--(0.6,0) arc[radius=0.3, start angle=0, end angle=180]--(0.3,0)--(0.3,0.8)--(0,0.8);
	\path [fill=white] (-.05,.05) rectangle (.95,-.05);
	\path [fill=white] (-.05,.85) rectangle (.95,.75);
	\path (1.2,0) rectangle (1.5,.8);
	\end{tikzpicture}
	\begin{tikzpicture}
	\draw [fill=lightgray] (0,0.8)--(0.8999999999999999,0)--(0.6,0)--(0.8999999999999999,0.8)--(0.6,0.8)--(0,0)--(0.3,0)arc[radius=.8, start angle=210, end angle=150]--(0,0.8);
	\path [fill=white] (-.05,.05) rectangle (.95,-.05);
	\path [fill=white] (-.05,.85) rectangle (.95,.75);
	\path (1.2,0) rectangle (1.5,.8);
	\end{tikzpicture}
	\begin{tikzpicture}
	\draw [fill=lightgray] (0,0.8)--(.6,.2)--(0.6,0)--(0.8999999999999999,0) arc[radius=0.2, start angle=0, end angle=90]--(.2,.2) arc[radius=.2, start angle=90, end angle=180]--(0.3,0)--(0.6,0.8)--(0.8999999999999999,0.8) arc[radius=0.3, start angle=360, end angle=180]--(0,0.8);
	\path [fill=white] (-.05,.05) rectangle (.95,-.05);
	\path [fill=white] (-.05,.85) rectangle (.95,.75);
	\path (1.2,0) rectangle (1.5,.8);
	\end{tikzpicture}
	\begin{tikzpicture}
	\draw [fill=lightgray] (0,0.8)--(0.8999999999999999,0)--(0.6,0)arc[radius=.8, start angle=-30, end angle=30]--(0.8999999999999999,0.8)--(0.3,0)--(0,0)--(0.3,0.8)--(0,0.8);
	\path [fill=white] (-.05,.05) rectangle (.95,-.05);
	\path [fill=white] (-.05,.85) rectangle (.95,.75);
	\path (1.2,0) rectangle (1.5,.8);
	\end{tikzpicture}
	\begin{tikzpicture}
	\draw [fill=lightgray] (0,0.8) arc[radius=0.2, start angle=180, end angle=270]--(.7,.6) arc[radius=.2, start angle=270, end angle=360]--(0.6,0.8)--(0.3,0)--(0,0) arc[radius=0.3, start angle=180, end angle=0]--(0.8999999999999999,0)--(.3,.6)--(0.3,0.8)--(0,0.8);
	\path [fill=white] (-.05,.05) rectangle (.95,-.05);
	\path [fill=white] (-.05,.85) rectangle (.95,.75);
	\path (1.2,0) rectangle (1.5,.8);
	\end{tikzpicture}\\
	&\begin{tikzpicture}
	\draw [fill=lightgray] (0,0.8)--(0.6,0)--(0.8999999999999999,0)--(0.6,0.8)--(0.8999999999999999,0.8)--(0,0)--(0.3,0) arc [radius=.8, start angle=210, end angle=150]--(0,0.8);
	\path [fill=white] (-.05,.05) rectangle (.95,-.05);
	\path [fill=white] (-.05,.85) rectangle (.95,.75);
	\path (1.2,0) rectangle (1.5,.8);
	\end{tikzpicture}
	\begin{tikzpicture}
	\draw [fill=lightgray] (0,0.8) arc[radius=0.3, start angle=180, end angle=360]--(0.8999999999999999,0.8)--(0.3,.2)--(.3,0)--(0,0) arc[radius=0.2, start angle=180, end angle=90]--(.7,.2) arc[radius=.2, start angle=90, end angle=0]--(0.6,0)--(0.3,0.8)--(0,0.8);
	\path [fill=white] (-.05,.05) rectangle (.95,-.05);
	\path [fill=white] (-.05,.85) rectangle (.95,.75);
	\path (1.2,0) rectangle (1.5,.8);
	\end{tikzpicture}
	\begin{tikzpicture}
	\draw [fill=lightgray] (0,0.8)--(0.3,0)--(0,0)--(0.8999999999999999,0.8)--(0.6,0.8) arc[radius=.8, start angle=30, end angle=-30]--(0.8999999999999999,0)--(0.3,0.8)--(0,0.8);
	\path [fill=white] (-.05,.05) rectangle (.95,-.05);
	\path [fill=white] (-.05,.85) rectangle (.95,.75);
	\path (1.2,0) rectangle (1.5,.8);
	\end{tikzpicture}
	\begin{tikzpicture}
	\draw [fill=lightgray] (0,0.8) arc[radius=0.2, start angle=180, end angle=270]--(.7,.6) arc[radius=.2, start angle=270, end angle=360]--(0.6,0.8)--(.6,.6)--(0,0)--(0.3,0) arc[radius=0.3, start angle=180, end angle=0]--(0.6,0)--(0.3,0.8)--(0,0.8);
	\path [fill=white] (-.05,.05) rectangle (.95,-.05);
	\path [fill=white] (-.05,.85) rectangle (.95,.75);
	\path (1.2,0) rectangle (1.5,.8);
	\end{tikzpicture}
	\end{align*}
	\item The diagrams containing 4 generators.
	\begin{align*}
	&\begin{tikzpicture}
	\draw [fill=lightgray](0,0)--(.3,0)--(.9,.8)--(.6,.8)--(0,0);
	\draw [fill=lightgray] (0,.8)--(.3,.8)--(.9,0)--(.6,0)--(0,.8);
	\draw [fill=white] (.45,.6)--(.3,.4)--(.45,.2)--(.6,.4)--(.45,.6);
	\path [fill=white] (-.05,.05) rectangle (.95,-.05);
	\path [fill=white] (-.05,.85) rectangle (.95,.75);
	\path (1.2,0) rectangle (1.5,.8);
	\end{tikzpicture}
	\begin{tikzpicture}
	\draw [fill=lightgray] (0,0.8) arc[radius=.2, start angle=180, end angle=270]--(.7,.6) arc[radius=.2, start angle=270, end angle=360]--(0.6,0.8)--(0.6,0)--(0.8999999999999999,0) arc[radius=0.2, start angle=0, end angle=90]--(.2,.2) arc[radius=.2, start angle=90, end angle=180]--(0.3,0)--(0.3,0.8)--(0,0.8);
	\path [fill=white] (-.05,.05) rectangle (.95,-.05);
	\path [fill=white] (-.05,.85) rectangle (.95,.75);
	\path (1.2,0) rectangle (1.5,.8);
	\end{tikzpicture}
	\end{align*}
	\item The diagrams containing at least 5 generators.	
	\begin{align*}
	&\begin{tikzpicture}
	\path [fill=white] (0,0+.8+.4)--(.3,0+.8+.4)--(0,-.4+.8+.4)--(.3,-.4+.8+.4);
	\path [fill=lightgray](0,0)--(.3,0)--(.9,.8)--(.6,.8)--(0,0);
	\path [fill=lightgray] (0,.8)--(.3,.8)--(.9,0)--(.6,0)--(0,.8);
	\draw [fill=white] (.45,.6)--(.3,.4)--(.45,.2)--(.6,.4)--(.45,.6);
	\draw (0,0)--(.6,.8);
	\draw (.3,0)--(.9,.8);
	\draw (0,.8)--(.6,0);
	\draw (.3,.8)--(.9,0);
	\path (1.2,0) rectangle (1.5,.8);
	\path [fill=lightgray] (0,0)--(.3,0)--(0,-.4)--(.3,-.4);
	\draw (0,0)--(.3,-.4);
	\draw (.3,0)--(0,-.4);
	\end{tikzpicture}
	\begin{tikzpicture}
	\path [fill=white] (0,0+.8+.4)--(.3,0+.8+.4)--(0,-.4+.8+.4)--(.3,-.4+.8+.4);
	\path [fill=lightgray](0,0)--(.3,0)--(.9,.8)--(.6,.8)--(0,0);
	\path [fill=lightgray] (0,.8)--(.3,.8)--(.9,0)--(.6,0)--(0,.8);
	\draw [fill=white] (.45,.6)--(.3,.4)--(.45,.2)--(.6,.4)--(.45,.6);
	\draw (0,0)--(.6,.8);
	\draw (.3,0)--(.9,.8);
	\draw (0,.8)--(.6,0);
	\draw (.3,.8)--(.9,0);
	\path (1.2,0) rectangle (1.5,.8);
	\path [fill=lightgray] (.6+0,0)--(.6+.3,0)--(.6+0,-.4)--(.6+.3,-.4);
	\draw (.6+0,0)--(.6+.3,-.4);
	\draw (.6+.3,0)--(.6+0,-.4);
	\end{tikzpicture}
	\begin{tikzpicture}
	\path [fill=lightgray](0,0)--(.3,0)--(.9,.8)--(.6,.8)--(0,0);
	\path [fill=lightgray] (0,.8)--(.3,.8)--(.9,0)--(.6,0)--(0,.8);
	\draw [fill=white] (.45,.6)--(.3,.4)--(.45,.2)--(.6,.4)--(.45,.6);
	\draw (0,0)--(.6,.8);
	\draw (.3,0)--(.9,.8);
	\draw (0,.8)--(.6,0);
	\draw (.3,.8)--(.9,0);
	\path (1.2,0) rectangle (1.5,.8);
	\path [fill=lightgray] (.6+0,0+.8+.4)--(.6+.3,0+.8+.4)--(.6+0,-.4+.8+.4)--(.6+.3,-.4+.8+.4);
	\draw (.6+0,0+.8+.4)--(.6+.3,-.4+.8+.4);
	\draw (.6+.3,0+.8+.4)--(.6+0,-.4+.8+.4);
	\path [fill=white] (0,0)--(.3,0)--(0,-.4)--(.3,-.4);
	\end{tikzpicture}
	\begin{tikzpicture}
	\path [fill=lightgray](0,0)--(.3,0)--(.9,.8)--(.6,.8)--(0,0);
	\path [fill=lightgray] (0,.8)--(.3,.8)--(.9,0)--(.6,0)--(0,.8);
	\draw [fill=white] (.45,.6)--(.3,.4)--(.45,.2)--(.6,.4)--(.45,.6);
	\draw (0,0)--(.6,.8);
	\draw (.3,0)--(.9,.8);
	\draw (0,.8)--(.6,0);
	\draw (.3,.8)--(.9,0);
	\path (1.2,0) rectangle (1.5,.8);
	\path [fill=lightgray] (0,0+.8+.4)--(.3,0+.8+.4)--(0,-.4+.8+.4)--(.3,-.4+.8+.4);
	\draw (0,0+.8+.4)--(.3,-.4+.8+.4);
	\draw (.3,0+.8+.4)--(0,-.4+.8+.4);
	\path [fill=white] (0,0)--(.3,0)--(0,-.4)--(.3,-.4);
	\end{tikzpicture}
	\begin{tikzpicture}
	\path [fill=lightgray](0,0)--(.3,0)--(.9,.8)--(.6,.8)--(0,0);
	\path [fill=lightgray] (0,.8)--(.3,.8)--(.9,0)--(.6,0)--(0,.8);
	\draw [fill=white] (.45,.6)--(.3,.4)--(.45,.2)--(.6,.4)--(.45,.6);
	\draw (0,0)--(.6,.8);
	\draw (.3,0)--(.9,.8);
	\draw (0,.8)--(.6,0);
	\draw (.3,.8)--(.9,0);
	\path (1.2,0) rectangle (1.5,.8);
	\path [fill=lightgray] (0,0)--(.3,0)--(0,-.4)--(.3,-.4);
	\draw (0,0)--(.3,-.4);
	\draw (.3,0)--(0,-.4);
	\path [fill=lightgray] (.6+0,0+.8+.4)--(.6+.3,0+.8+.4)--(.6+0,-.4+.8+.4)--(.6+.3,-.4+.8+.4);
	\draw (.6+0,0+.8+.4)--(.6+.3,-.4+.8+.4);
	\draw (.6+.3,0+.8+.4)--(.6+0,-.4+.8+.4);
	\end{tikzpicture}
	\begin{tikzpicture}
	\path [fill=lightgray](0,0)--(.3,0)--(.9,.8)--(.6,.8)--(0,0);
	\path [fill=lightgray] (0,.8)--(.3,.8)--(.9,0)--(.6,0)--(0,.8);
	\draw [fill=white] (.45,.6)--(.3,.4)--(.45,.2)--(.6,.4)--(.45,.6);
	\draw (0,0)--(.6,.8);
	\draw (.3,0)--(.9,.8);
	\draw (0,.8)--(.6,0);
	\draw (.3,.8)--(.9,0);
	\path (1.2,0) rectangle (1.5,.8);
	\path [fill=lightgray] (.6+0,0)--(.6+.3,0)--(.6+0,-.4)--(.6+.3,-.4);
	\draw (.6+0,0)--(.6+.3,-.4);
	\draw (.6+.3,0)--(.6+0,-.4);
	\path [fill=lightgray] (0,0+.8+.4)--(.3,0+.8+.4)--(0,-.4+.8+.4)--(.3,-.4+.8+.4);
	\draw (0,0+.8+.4)--(.3,-.4+.8+.4);
	\draw (.3,0+.8+.4)--(0,-.4+.8+.4);
	\end{tikzpicture}
	\begin{tikzpicture}
	\path [fill=white] (0,0)--(.3,0)--(0,-.4)--(.3,-.4);
	\path [fill=white] (0,0+.8+.4)--(.3,0+.8+.4)--(0,-.4+.8+.4)--(.3,-.4+.8+.4);
	\draw [fill=lightgray] (0,0.8)--(0.8999999999999999,0)--(0.6,0)--(0.6,0.8)--(0.8999999999999999,0.8)--(0,0)--(0.3,0)--(0.3,0.8)--(0,0.8);
	\path [fill=white] (-.05,.05) rectangle (.95,-.05);
	\path [fill=white] (-.05,.85) rectangle (.95,.75);
	\path (1.2,0) rectangle (1.5,.8);
	\end{tikzpicture}
	\end{align*}
\end{itemize}

\bibliography{bibliography}
\bibliographystyle{amsalpha}

\end{document}